\newtheorem{lemma}{Lemma}
\newtheorem{theorem}{Theorem}
\newtheorem{assumption}{Assumption}
\newtheorem{definition}{Definition}
\newtheorem{corollary}{Corollary}
\newtheorem{remark}{Remark}
\newcommand{\edit}{\color{black}}
\newcommand{\editSte}{\color{black}}		
\begin{document}

\title{A regularized Interior Point Method for sparse Optimal Transport on Graphs}

\author{Stefano Cipolla\footnote{School of Mathematics, University of Edinburgh, Edinburgh, UK. \href{mailto:scipolla@exseed.ed.ac.uk}{scipolla@exseed.ed.ac.uk}}\and Jacek Gondzio\footnote{School of Mathematics, University of Edinburgh, Edinburgh, UK. \href{mailto:j.gondzio@ed.ac.uk}{j.gondzio@ed.ac.uk}} \and Filippo Zanetti\footnote{School of Mathematics, University of Edinburgh, Edinburgh, UK. \href{mailto:f.zanetti@sms.ed.ac.uk}{f.zanetti@sms.ed.ac.uk}}}
\date{}
\maketitle

\begin{abstract}
\noindent 
In this work, the authors address the Optimal Transport (OT) problem on graphs using a proximal stabilized Interior Point Method (IPM). In particular, strongly leveraging on the induced primal-dual regularization, the authors propose to solve
large scale OT problems on sparse graphs using a bespoke IPM algorithm able to suitably exploit  primal-dual  regularization in order to enforce scalability. Indeed, the authors prove that the introduction of the regularization allows to use sparsified versions of the normal Newton equations to inexpensively generate IPM search directions. A detailed theoretical analysis is carried out showing the polynomial convergence of the inner algorithm in the proposed
computational framework. Moreover, the presented numerical results showcase the efficiency and robustness of the proposed approach when compared to network simplex solvers. 
\end{abstract}

\noindent\textbf{Keywords}: Convex programming, primal-dual regularized interior point methods, optimal transport on graphs, polynomial complexity, inexact interior point methods.

\section{Introduction} \label{sec:Intro}
The Optimal Transport (OT) problem requires to move a certain distribution of mass from one configuration into another, minimizing the total cost required for the operation. It has been studied extensively, from the early work of Kantorovich \cite{Kan:OT}, to the development of ever faster algorithms for various OT formulations, e.g.\ \cite{Cut:sinkhorn,GotSch:shortlist,LinOka:OT,Que:AHA2,PeyCut:computationalOT,SchSchGot:dotmark}. Recently, there has been a growing interest in using Interior Point Methods (IPMs) \cite{MR2881732} in applications that involve optimal transport, in particular for very large scale instances of such problems, see e.g.\ \cite{NatTod:OT,WijChe:mfipm,ZanGon:OT}. 

A particularly interesting problem is the optimal transport over {\it sparse} graphs: in this case, the transport of mass is only possible along a specific subset of connections, which is noticeably smaller than the full list of edges of a fully connected bipartite graph, as it would happen in a standard discrete OT formulation. 
{\edit
The use of OT and the Wasserstein distance (i.e.\ the optimal objective function of the OT problem) is becoming more and more common in many practical applications, e.g. neural networks \cite{GulAhmArjMunCou}, image processing \cite{HakZhuTanAng}, inverse problems \cite{MetBroMerOudVir} and in the analysis of large complex networks \cite{Community_detection}.
}

The specific formulation of the problem is the following: suppose that $G = (V,E)$ is a connected graph with directed edges $E \subset V \times V$ and weights $\mathbf c \in \mathbb{R}_+^{|E|}$. Define the incidence matrix $A \in \{-1,0,1\}^{|V| \times |E|}$ as
\begin{equation*}
A_{ve}:=\begin{cases}
		-1, & \hbox{ if }\ e=(v,w) \hbox{ for some } w \in V  \\
		 1, & \hbox{ if }\ e=(w,v) \hbox{ for some } w \in V  \\
		 0, & \hbox{ otherwise. }
	\end{cases}
\end{equation*}
We consider the optimal transport problem in the \textit{Beckmann} form \cite{MR3820384}:
\begin{equation}
\label{eqn:problem_formulation}
	\mathcal{W}_1(\boldsymbol{\rho_0},\boldsymbol{\rho_1}):=\begin{cases}
		\min_{\mathbf{x} \in \mathbb{R}^{|E|}} & \sum_{e \in E} c_e\mathbf{x}_e \\
		s.t.                     & A\mathbf{x}=\boldsymbol{\rho_1}-\boldsymbol{\rho_0}\\   
		                         & \mathbf{x} \geq 0
	                                  
	\end{cases},
\end{equation}
where $\boldsymbol{\rho_0}, \boldsymbol{\rho_1} \in \{\boldsymbol{\rho} \in \mathbb{R}^{|V|} \,:\, \mathbf{1}^T\boldsymbol{\rho}=1 \hbox{ and } \boldsymbol{\rho} \geq 0 \}=: Prob(V) $. In the following we will define $|E|:=n$ and $|V|:=m$. OT on graphs has been recently studied in \cite{MR3820384,FacBen:OT} and, in this formulation,  it is similar to the more general minimum cost flow problem on networks \cite{AhuMagOrl:networkflows}, which has also seen extensive use of IPMs, e.g.\ \cite{CasNas:ipm,FraGen:kkt,MR1740364,ResVei:networks}. 

Sparse graphs have on average very few edges per node, which can lead to nearly disconnected regions and seriously limit the possible paths where mass can be moved. As a result, finding a solution to the optimal transport problem on a sparse graph requires more sophisticated algorithms and may be more computationally challenging compared to solving the same problem on a denser graph. In particular, first order methods like the network simplex may struggle and move slowly towards optimality, due to the limited number of edges available, while an interior point method manages to identify quickly the subset of basic variables (i.e.\ the subset of edges with non-zero flow) and converges faster.

In this work, the authors address the efficient solution of the optimal transport problem \eqref{eqn:problem_formulation} considering the Proximal-Stabilized  Interior Point framework (PS-IPM), recently introduced and analysed in \cite{Cipolla_Gondzio}.

As originally observed in \cite{MR1740364},  when IPMs are used to solve the minimum cost flow problem on networks, the normal form of the related Newton systems is structured as a Laplacian matrix of the graph {\editSte (defined as the difference of the diagonal matrix of the vertex degrees minus the adjacency matrix)} and the iterates of IPM determine the associate weights of this matrix{\editSte, see also eq. \eqref{eq:Laplacian_IPM} }.
In \cite{10.1145/1374376.1374441}, this observation was exploited to solve such Laplacian linear systems (which are, in turn, particular instances of symmetric M-matrices) through the fast specialized solution of $O(\ln m)$ linear systems  involving symmetric diagonally dominant matrices \cite{MR3228466}. We refer the interested reader to \cite{MR3071502} for a survey on fast Laplacian solvers and to \cite{MR1871316} for information concerning the distribution of Laplacian's singular values.

\subsection{Contribution and organization}

This work focuses on the efficient solution of large scale OT problems on sparse graphs using a bespoke IPM algorithm able to suitably exploit  primal-dual  regularization in order to enforce scalability. The organization of the work and its main contributions can be summarized as follows:
\begin{itemize}
	
\item In Section \ref{sec:computational framework}, the authors briefly recall the proximal stabilized framework responsible for the primal-dual  regularization of the IPMs here considered.

\item In Section \ref{sec:convergence}, the authors provide a detailed convergence analysis of the inexact infeasible primal-dual regularized IPM, when a proximal stabilization procedure is used. Moreover, they prove  its polynomial complexity. 

\item In Section \ref{sec;properties_regularized_normal}, the authors prove that the normal form of the related Newton system is naturally structured as a shifted Laplacian matrix characterized by a strict diagonal dominance. Such feature consistently simplifies the factorization of the normal equations and  allows the use of standard libraries for the solution of the corresponding linear systems.  On the other hand, such factorizations could incur a significant fill-in even when the original graph is sparse, hence limiting the applicability of the proposed approach for the solution of large scale problems.

\item In Section \ref{sec:Sparsification}, to overcome potential scalability issues related to the fill-in mentioned above, the authors propose to generate IPM search directions using {\it sparsified} versions of the IPM normal equations. {\edit In particular, the original normal matrix takes the form $A(\Theta^{-1}+\rho I)^{-1}A^T+\delta I$, where $\rho,\delta$ are regularization parameters and $\Theta$ is a diagonal matrix related to the IPM barrier parameter $\mu$; the authors propose to use a \textit{perturbed} normal matrix, where the entries of $(\Theta^{-1}+\rho I)^{-1}$ that are sufficiently small (when compared to $\mu$) are set to zero (completely ignoring the corresponding columns of matrix $A$). This strategy reduces the time required to assemble and solve the normal equations systems, providing a fundamental advantage to the algorithm.}


The resulting sparsified linear systems are solved either using a Cholesky factorization 
(if that displays only negligible fill-in) or using the conjugate gradient method and employing a simple and inexpensive  incomplete Cholesky preconditioner. 
In both these cases either the {\it complete}  or the {\it incomplete} Cholesky factorization
remains very sparse, and this translates into an outstanding efficiency of the proposed method. 
Moreover, the authors are able to interpret the Newton directions generated using sparsified Newton matrices as {\it inexact} Newton directions. Relying on the convergence theory developed in Section \ref{sec:convergence}, the authors are able to prove that, under suitable choice of the sparsification parameters, the above described approach gives rise to a polynomially convergent algorithm.

\item In Section \ref{sec:numer_res}, the authors present experimental results which demonstrate the efficiency and robustness of the proposed approach. Extensive numerical experiments, involving very large and sparse graphs coming from public domain random generators as well as from real world applications, show that, for sufficiently large problems, the approach  presented in this work consistently outperforms, in terms of computational time, the Lemon network simplex implementation \cite{lemon_paper}, one of the state-of-the-art solvers available for network problems. 

\end{itemize}

\subsection{Notation}
In the paper, vectors are indicated with bold letters. $\|\cdot\|$ indicates the Euclidean norm. $I$ represents the identity matrix and $\mathbf e$ the vector of all ones. Given a vertex $v$ of a graph $G$, we denote as $deg(v)$ its degree, i.e.\ the number of edges that are incident to $v$. Concerning the variables inside the algorithm, we use a subscript $k$ to indicate the external proximal iteration and a superscript $j$ to indicate the internal IPM iteration. Given a sequence $\{\mu^j\}_{j \in \mathbb{N}}$ and a continuos function $f$, the big-O notation $O(\cdot)$ is used as follows:
\begin{equation*}
	\{u^j\}_{j \in \mathbb{N}} \in O\big(f(\mu^j)\big) \hbox{ iff }  \exists \; \; C>0   \hbox{ s.t. }  u^j \leq C f(\mu^j) \hbox{ for all } j \in \mathbb{N}. 
\end{equation*}

%
%
%
%
%

\section{Computational Framework} \label{sec:computational framework}

\subsection{Proximal-Stabilized Interior Point Method}

Let us consider the following primal-dual formulation of a Linear Program (LP):

\begin{align} \label{eq:LP_problem}
	\begin{aligned}
		\min_{\mathbf{x} \in \mathbb{R}^n} \;& \mathbf{c}^T\mathbf{x}  \\ 
		\hbox{s.t.} \; & A\mathbf{x}= \mathbf{b} \\
		& \mathbf{x}  \geq 0\\
	\end{aligned} \;\; \; \; \;\; \; \;
	\begin{aligned}
		\max_{\mathbf{s}  \in \mathbb{R}^n, \; \mathbf{y} \in \mathbb{R}^m} \;& \mathbf{b}^T\mathbf{y}\\ 
		\hbox{s.t.} \; & 
		\mathbf{c}-A^T\mathbf{y}-\mathbf{s}=0 \\
		& \mathbf{s}\geq 0\\
	\end{aligned}
\end{align}
where $A \in \mathbb{R}^{m \times n}$ with $m \leq n$ is not required to have full rank.
{\edit Notice that problem \eqref{eqn:problem_formulation} is indeed formulated in this way.

We solve this problem using PS-IPM \cite{Cipolla_Gondzio}, which is a \textit{proximal-stabilized} version of classic \textit{Interior Point Method}.} {\editSte Broadly speaking, PS-IPM resorts to the Proximal Point Method (PPM) \cite{rockafellar1970monotone} to produce primal-dual regularized forms of problem \eqref{eq:LP_problem}. Indeed, given an approximation $(\mathbf{x}_k,\mathbf{y}_k)$ of the solution of such problem, PS-IPM uses interior point methods  to produce the next PPM step $(\mathbf{x}_{k+1},\mathbf{y}_{k+1})$, which, in turn, represents a \textit{better} approximation of the solution  of problem \eqref{eq:LP_problem}. 

In this regard, the problem that needs to be solved at every PPM step takes the form}
\begin{align} \label{eq:LP_r_problem}
	\begin{aligned}
		\min_{\substack{\mathbf{x} \in \mathbb{R}^n\\ \mathbf{y} \in \mathbb{R}^m}} \;& \mathbf{c}^T\mathbf{x}+\frac{\rho}{2}\|\mathbf{x}-\mathbf{x}_k\|^2 +\frac{\delta}{2}\|\mathbf{y}\|^2  \\ 
		\hbox{s.t.} \; & A\mathbf{x}+\delta(\mathbf{y}-\mathbf{y}_k)= \mathbf{b} \\
		& \mathbf{x} \geq 0, \\
	\end{aligned} 
	\qquad
	\begin{aligned}
		\max_{\substack{\mathbf{x}, \; \mathbf{s} \in \mathbb{R}^n\\ \mathbf{y} \in \mathbb{R}^m}} \;& \mathbf{y}^T\mathbf{b}   {-\frac{\rho}{2}\|\mathbf{x}\|^2-\frac{\delta}{2}\|\mathbf{y}-\mathbf{y}_k\|^2} \\ 
		\hbox{s.t.} \; & 
		\rho(\mathbf{x}- \mathbf{x}_k)-A^T\mathbf{y} - \mathbf{s}+\mathbf{c}  =0 \\
		& \mathbf{s}  \geq 0 \\
	\end{aligned}. \tag{PPM$(k)$}
\end{align} 

{\editSte
\begin{definition}{Solution of problem \eqref{eq:LP_r_problem} \\}
	Using standard duality theory, we say that $(\mathbf{x}_k^*,\mathbf{y}_k^*,\mathbf{s}_k^*)$ is a solution of problem \eqref{eq:LP_r_problem} if the following identities hold
	\begin{align}\label{eqn:optimal_solution_PPM}
		        A\mathbf{x}_k^* +\delta(\mathbf{y}_k^* -\mathbf{y}_k) - \mathbf{b} =0 \notag\\
		        \rho(\mathbf{x} - \mathbf{x}_k)-A^T\mathbf{y}_k^* - \mathbf{s} +\mathbf{c}=0  \\
		            (\mathbf{x}_k^*)^T\mathbf{s}_k^*=0  \hbox{ and }  (\mathbf{x}_k^*,\mathbf{s}_k^*) \geq 0  \notag
	\end{align}
\end{definition}
}

{\editSte
More in particular, the PS-IPM here considered uses two nested cycles to solve problem \eqref{eq:LP_problem}. 
The outer loop uses an inexact proximal point method \cite{MR732428}, as shown in Algorithm~\ref{alg:PS-MF-IPM}: the current approximate solution $(\mathbf{x}_k,\mathbf{y}_k)$ is used to regularize the LP problem, which is then solved using an IPM to find the next approximate solution $(\mathbf{x}_{k+1},\mathbf{y}_{k+1}) \approx (\mathbf{x}^*_{k},\mathbf{y}^*_{k}) $. And indeed, at the inner loop level, an inexact infeasible interior point method
is used to solve the PPM sub-problems, see Algorithm~\ref{alg:IPM}.
{\editSte
Notice that both methods are {\it inexact}: the outer cycle is inexact because the sub-problems are solved approximately by an IPM; the IPM is inexact because the Newton systems are also solved inexactly (see Section \ref{sec:inexact_IPM} for more details). Notice also that the IPM is referred to as {\it infeasible} because the intermediate iterates are not required to be inside the feasible region. We also call the inner loop {\it regularized}, because it is a primal-dual regularized version of the original LP \eqref{eq:LP_problem}.} 

Regularization in interior point methods was originally introduced in \cite{S&T} and extensively used in \cite{MR1777460}, as a tool to stabilize and improve the linear algebra routines needed for their efficient implementation. In this work and in \cite{Cipolla_Gondzio}, the regularization is introduced as a result of the application of the PPM at the outer cycle level. To summarize, in the following we use three acronyms: PPM refers to the outer cycle; IPM refers to the inner cycle; PS-IPM refers to the overall procedure, combining PPM and IPM.
}

\begin{algorithm}[hbt!]
	\caption{PPM, outer loop of PS-IPM}\label{alg:PS-MF-IPM}
	\KwIn{ $tol > 0$, $\sigma_r \in (0,1)$, $\tau_1>0$.  }
	\init{Iteration counter $k = 0$; initial point $(\mathbf{x}_0,\mathbf{y}_0)$}
	\While{Stopping Criterion \eqref{eqn:Alg1_stop} False}{
		Use Algorithm \ref{alg:IPM} with starting point $(\mathbf{x}^0_{k}, \mathbf{y}^0_{k})=(\mathbf{x}_{k}, \mathbf{y}_{k})$ to find $(\mathbf{x}_{k+1},\mathbf{y}_{k+1})$ s.t. 
		\begin{equation}\label{eq:stopping_condition}
			\|\mathbf{r}_k(\mathbf{x}_{k+1},\mathbf{y}_{k+1})\| < \frac{{\sigma_r^k}}{\tau_1} \min\{1, \|(\mathbf{x}_{k+1}, \mathbf{y}_{k+1})-(\mathbf{x}_{k}, \mathbf{y}_{k}) \|\}
		\end{equation} \\
		
		Update the iteration counter: $k := k + 1$.
	}
\end{algorithm}

{\editSte
Concerning the stopping criteria, we finally highlight that Algorithm~\ref{alg:PS-MF-IPM} is stopped based on the criterion \eqref{eqn:Alg1_stop}. Algorithm~\ref{alg:IPM} instead, is stopped according to the accuracy that is required for the solution of current sub-problem and based on the following \textit{natural residual}, see \cite{MR732428}, of problem \eqref{eq:LP_r_problem}:}

\begin{definition}[Natural Residual]
	\begin{equation*} \label{eq:PPM_res_reg}
		\mathbf{r}_k(\textbf{x},\mathbf{y}):=\begin{bmatrix}
			\mathbf{x} \\
			\mathbf{y}
		\end{bmatrix} - \Pi_{D} \Big ( \begin{bmatrix}
			\mathbf{x} \\
			\mathbf{y}
		\end{bmatrix}- \begin{bmatrix}
			\rho  ( \mathbf{x}-\mathbf{x}_k) +\mathbf{c}-A^T\mathbf{y} \\
			A\mathbf{x}-\mathbf{b}+ \delta (\mathbf{y}-\mathbf{y}_k) 
		\end{bmatrix} \Big ),
	\end{equation*} 
where $$D:=\mathbb{R}_{\geq 0}^{n} \times \mathbb{R}^{m}$$ and where $\Pi_D$ is the corresponding projection operator. {\editSte Moreover,  it is easy to verify that $(\mathbf{x}_k^*,\mathbf{y}_k^*,\mathbf{s}_k^*)$ is a solution of problem \eqref{eq:LP_r_problem} if and only if $\mathbf{r}_k(\mathbf{x}_k^*,\mathbf{y}_k^*)=0$, \cite[Sec. 2.3]{Cipolla_Gondzio}. }
\end{definition}

{\edit
\subsection{Interior point method} \label{sec:inexact_IPM}

We now focus on the inner cycle and give a brief description of the IPM used to solve problem \eqref{eq:LP_r_problem}.  {\editSte To this aim, we introduce the following Lagrangian function which uses a logarithmic barrier  to take into account the inequality constraints
	\begin{equation}\label{eq:AR_Lagragian}
		\begin{split}
			L_k(\mathbf{x}, \mathbf{y})=&\frac{1}{2}[\mathbf{x}^T, \mathbf{y}^T] \begin{bmatrix}
				\rho I & 0 \\
				0 & \delta I
			\end{bmatrix}\begin{bmatrix}
				\mathbf{x} \\
				\mathbf{y}
			\end{bmatrix} +[\mathbf{c}^T- \rho \mathbf{x}_k^T, 0 ]\begin{bmatrix}
				\mathbf{x} \\
				\mathbf{y}
			\end{bmatrix} \\
			&-\mathbf{y}^T(A
			\mathbf{x} + \delta (\mathbf{y}-\mathbf{y}_k) -\mathbf{b}) - \mu \sum_{i =1}^{n} \ln (x_i).
		\end{split}
	\end{equation}
}

\noindent The KKT conditions that arise from the gradients of the Lagrangian \eqref{eq:AR_Lagragian} are
}
\begin{align*}
	\nabla_{\mathbf{x}}L_k(\mathbf{x},\mathbf{y})=	 \rho\mathbf{x}-A^T\mathbf{y}+\mathbf{c} -\rho{\mathbf{x}_k} - \begin{bmatrix}
		\frac{\mu}{x_{1}} \\
		\vdots \\
		\frac{\mu}{x_{n}}
	\end{bmatrix} =0 ; \label{eq:dual_R_feasibilityKKT1} \\
	-\nabla_{\mathbf{y}}L_k(\mathbf{x}, \mathbf{y}) = (A\mathbf{x}+\delta (\mathbf{y}-\mathbf{y}_k) -\mathbf{b})=0 . 
\end{align*}



\noindent Setting $s_i = \frac{\mu}{x_i}$ for $i \in \{1, \dots, n\}$, we consider the following function

\begin{equation}\label{eq:KKT_map}
	F_k^{\mu, \sigma}(\mathbf{x},\mathbf{y}, \mathbf{s}):=\begin{bmatrix}
		\rho  (\mathbf{x}-{\mathbf{x}_k})-A^T \mathbf{y} - \mathbf{s} +\mathbf{c}\\
		A\mathbf{x}+\delta (\mathbf{y}-\mathbf{y}_k) -\mathbf{b}
		\\
		SX\mathbf{e}-\sigma \mu \mathbf{e}
	\end{bmatrix},
\end{equation}
{where $\sigma \in (0,1)$ is the barrier reduction parameter, $S=\text{diag}(\mathbf s)$ and $X=\text{diag}(\mathbf x)$.} A primal–dual interior point method  applied to problem \eqref{eq:LP_r_problem} relies on the use of  Newton iterations to solve a nonlinear problem of the form
\begin{equation*}
	F_k^{\mu, \sigma }(\mathbf{x},\mathbf{y}, \mathbf{s})=0, \; \; \mathbf{x}, \; \mathbf{s}>0.
\end{equation*}
A Newton step for \eqref{eq:KKT_map} from the current iterate $(\mathbf{x},\mathbf{y}, \mathbf{s})$ is obtained by solving the system
  \begin{equation} \label{eq:Newton_System}
  	\begin{bmatrix}
  		\rho I & -A^T & -I \\
  		A    & \delta I & 0 \\
  		S      &  0       & X
  	\end{bmatrix}\begin{bmatrix}
  		\Delta \mathbf{x} \\
  		\Delta \mathbf{y} \\
  		\Delta \mathbf{s}
  	\end{bmatrix} =-F_k^{\mu,\sigma}(\mathbf{x},\mathbf{y},\mathbf{s})=:\begin{bmatrix}
  	\boldsymbol{\xi}_d \\
  	\boldsymbol{\xi}_p \\
  	\boldsymbol{\xi}_{\mu,\sigma}
  \end{bmatrix},
  \end{equation}
i.e., the following relations hold:
\begin{align} 
	& \rho \Delta \mathbf{x} - A^T \Delta \mathbf{y} - \Delta \mathbf{s} = \boldsymbol{\xi}_d  \label{eq:Newton_details} \\
	& A\Delta \mathbf{x} + \delta \Delta \mathbf{y} =  \boldsymbol{\xi}_p \label{eq:normal_sol_2} \\
	& S \Delta \mathbf{x} + X \Delta \mathbf{s} = \boldsymbol{\xi}_{\mu,\sigma} \label{eq:normal_sol_3},
\end{align}  
{\edit
where $(\Delta\mathbf x,\Delta\mathbf y,\Delta\mathbf s)$ is the Newton direction to be taken at each iteration (with an appropriate stepsize).
}

\noindent The solution of \eqref{eq:Newton_System} is delivered by the following computational procedure 
\begin{align}
	& \big(A(\Theta^{-1}+\rho I)^{-1}A^T+\delta I\big)\Delta \mathbf{y} = \boldsymbol{\xi}_p - A(\Theta^{-1}+\rho I)^{-1}(X^{-1}\boldsymbol{\xi}_{\mu,\sigma} + \boldsymbol{\xi}_d )  \label{eq:normal_system} \\
	&  (\Theta^{-1}+\rho I) \Delta \mathbf{x} = A^T \Delta \mathbf{y} + \boldsymbol{\xi}_d + X^{-1}\boldsymbol{\xi}_{\mu,\sigma} \label{eq:normal_sol_2_bis}  \\ 
	& X \Delta \mathbf{s} = (\boldsymbol{\xi}_{\mu,\sigma} - S \Delta \mathbf{x})  \label{eq:normal_sol_3_bis}.
\end{align}
where $\Theta:=XS^{-1}$.  Before continuing let us give basic definitions used in the remainder of this work.

\begin{definition}
	Normal Matrix:
	\begin{equation}
	\label{eqn:normal_equations_matrix}
		S_{\rho, \delta}:=A(\Theta^{-1}+\rho I)^{-1}A^T+\delta I.
	\end{equation}
Neighbourhood of the infeasible central path:
\begin{equation}\label{eqn:neighbourhood}
	\begin{split}
	\mathcal{N}_k(\bar {\gamma},\underline{\gamma},\gamma_p,\gamma_d):= & \{(\mathbf{x},\mathbf{y},\textbf{s}) \in  {\editSte \mathbb{R}^n_{>0} \times \mathbb{R}^m \times \mathbb{R}^n_{>0} } \;:\; \\
	& \bar{\gamma} \mathbf{x}^T\mathbf{s}/n  \geq x_is_i \geq \underline{\gamma} \mathbf{x}^T\mathbf{s}/n\; \hbox{ for } \; i=1,\dots,n; \\
	&  \mathbf{x}^T\mathbf{s} \geq \gamma_p \|A\mathbf{x}+\delta(\mathbf{y}-\mathbf{y}_k)-\mathbf{b}\|; \\
	&  \mathbf{x}^T\mathbf{s} \geq  \gamma_d\|\rho(\mathbf{x}-\mathbf{x}_k)-A^T\mathbf{y}-\mathbf{s} + \mathbf{c}\|\}, 
	\end{split}
\end{equation}
where $\bar{\gamma} > 1 >\underline{\gamma}> 0$ and $(\gamma_p,\gamma_d)>0$.
\end{definition}

{\editSte
The neighbourhood here considered is standard in the analysis of infeasible IPMs \cite{MR1422257}: it requires the iterates to be close enough to the central path (according to parameters $\bar\gamma$ and $\underline\gamma$), and the primal-dual constraint violations to be reduced at the same rate as the complementarity product $\mathbf x^T\mathbf s$. Within this neighbourhood, $\mathbf x^T\mathbf s\to0$ guarantees convergence to a primal-dual optimal solution.
}

Moreover, we consider an {\it inexact} solution of the linear system \eqref{eq:normal_system}:
\begin{assumption} \label{ass:residual}
	\begin{equation} \label{eq:inexact_assumption}
		S_{\rho, \delta} \Delta \mathbf{y} = \bar{\boldsymbol{\xi}}_p + \boldsymbol{\boldsymbol{\zeta}} \hbox{ where } \|\boldsymbol{\boldsymbol{\zeta}}\|\le C_\text{inexact}\, {\mathbf{x}}^T\mathbf{s},
	\end{equation}
where $C_\text{inexact}\in(0,1)$ and we defined 
\begin{equation*}
	\bar{\boldsymbol{\xi}}_p:= \boldsymbol{\xi}_p - A(\Theta^{-1}+\rho I)^{-1}(X^{-1}\boldsymbol{\xi}_{\mu,\sigma} + \boldsymbol{\xi}_d ).
\end{equation*}
\end{assumption} 
It is important to note that the above Assumption \ref{ass:residual} is a non-standard requirement in inexact Newton methods \cite{Kel:book,DemEisSte:inexact_newton}. Its particular form is motivated by the use of IPM and the needs of the complexity analysis in Section \ref{sec:convergence}. It is chosen in agreement 
with the definition of the infeasible neighbourhood \eqref{eqn:neighbourhood} of the central path of the sub-problem considered. 
Using \eqref{eq:normal_sol_2_bis} and \eqref{eq:inexact_assumption} in \eqref{eq:normal_sol_2}, we have 

\begin{equation*}
		 \|  A\Delta \mathbf{x} + \delta \Delta \mathbf{y} -  \boldsymbol{\xi}_p  \| =
		  \| S_{\rho, \delta} \Delta \mathbf{y} - \bar{\boldsymbol{\xi}}_p \| = \|\boldsymbol{\boldsymbol{\zeta}}\|,
\end{equation*}
whereas equations \eqref{eq:Newton_details} and \eqref{eq:normal_sol_3} are satisfied exactly. Therefore the inexact Newton directions computed according to \eqref{eq:inexact_assumption} satisfy:

\begin{equation} \label{eq:inexact_Newton_System}
	\begin{bmatrix}
		\rho I & -A^T & -I \\
		A    & \delta I & 0 \\
		S      &  0       & X
	\end{bmatrix}\begin{bmatrix}
		\Delta \mathbf{x} \\
		\Delta \mathbf{y} \\
		\Delta \mathbf{s}
	\end{bmatrix} =\begin{bmatrix}
		\boldsymbol{\xi}_d \\
		\boldsymbol{\xi}_p \\
		\boldsymbol{\xi}_{\mu,\sigma}
	\end{bmatrix}+\begin{bmatrix}
	0 \\
	\boldsymbol{\boldsymbol{\zeta}} \\
	0
\end{bmatrix}.
\end{equation}

\noindent Define
\begin{equation} \label{eq:alpha_direc}
	\begin{bmatrix}
		\mathbf{x}^j_k(\alpha)\\
		\mathbf{y}^j_{k}(\alpha) \\
		\mathbf{s}^j_{k}(\alpha) 
	\end{bmatrix}:=\begin{bmatrix}
		\mathbf{x}_{k}^j\\
		\mathbf{y}_{k}^j \\
		\mathbf{s}_{k}^j 
	\end{bmatrix}+\begin{bmatrix}
		\alpha \Delta \mathbf{x}_k^j\\
		\alpha \Delta \mathbf{y}_k^j \\
		\alpha \Delta \mathbf{s}_k^j 
	\end{bmatrix},
\end{equation}
{\edit i.e.\ $\mathbf{x}^j_k(\alpha)$ is the point reached from $\mathbf{x}^j_k$ after a step of length $\alpha$ along the Newton direction. Notice that, after selecting the correct stepsize $\alpha^j_k$, we define $\mathbf{x}^{j+1}_k:=\mathbf{x}^j_k(\alpha^j_k)$.}

We report in Algorithm \ref{alg:IPM} a prototype IPM scheme for the solution of problem \eqref{eq:LP_r_problem}. {\edit The fundamental steps involved in the algorithm are: computing the Newton direction by solving \eqref{eq:inexact_Newton_System} with a level of inexactness that satisfies \eqref{eq:inexact_assumption}, see Line \ref{algline:Newton}; finding the largest stepsize that guarantees to remain inside the neighbourhood and to sufficiently reduce the complementarity products, see Line \ref{algline:step_lenght}; preparing the quantities to be used in the next iteration, see Lines \ref{algline:newton_step}-\ref{algline:new_complementarity}.}

We study the convergence of Algorithm~\ref{alg:IPM} in Section \ref{sec:convergence}. Concerning the notation, recall that the subscript $k$ is related to the iteration count of the outer Algorithm \ref{alg:PS-MF-IPM} (PPM) whereas the superscript $j$ is related to the iteration of the inner Algorithm \ref{alg:IPM} (IPM). {\edit To avoid over-complicating the notation, notice that in the following we use $\boldsymbol{\xi}_{p,k}^j$ and $\boldsymbol{\xi}_{d,k}^j$ instead of $(\boldsymbol{\xi}_p)_k^j$ and $(\boldsymbol{\xi}_d)_k^j$.}


\begin{algorithm}[hbt!]
	\caption{IPM: inner loop of PS-IPM}\label{alg:IPM}
	\KwIn{ 
		$0<\sigma,\bar{\sigma}<1$, barrier reduction parameters s.t. $\sigma < \bar{\sigma}$; \\
		$C_\text{inexact} \in (0,1)$ inexactness parameter s.t. $ \gamma_pC_\text{inexact} < {\sigma}$; \\
		}
	\init{\\
		Iteration counter $j = 0$; \\
		Primal–dual point $(\mathbf{x}_k^0,\mathbf{y}_k^0,\mathbf{s}_k^0)\in \mathcal{N}_k(\bar {\gamma},\underline{\gamma},\gamma_p,\gamma_d)$\\
		Compute $\mu_k^0:={\mathbf{x}_k^0}^T\mathbf{s}_k^0/n$, $\boldsymbol{\xi}^0_{d,k}$, and $\boldsymbol{\xi}^0_{p,k}$.}
		\While{{\editSte \texttt{Stopping Criterion \eqref{eq:stopping_condition}}  False}}{
			Solve the KKT system \eqref{eq:inexact_Newton_System} using  $[\boldsymbol{\xi}^j_{d,k},\boldsymbol{\xi}^j_{p,k},\boldsymbol{\xi}^j_{\mu_k^j, \sigma}]^T$  with $\|\boldsymbol{\boldsymbol{\zeta}}_k^{j}\| \leq C_\text{inexact} (\mathbf{x}^j_k)^T\mathbf{s}_k$ to find $[\Delta \mathbf{x}_k^j,\; \Delta \mathbf{y}_k^j, \; \Delta \mathbf{s}^j_k ]^T$ \label{algline:Newton} \;
			{\editSte  
			Compute
			\begin{equation*}
				\alpha_p^{*,j} = \sup \{\alpha \in \mathbb{R} \;:\;  \mathbf{x}_k^j(\alpha) \geq 0 \}
			\end{equation*}
			\begin{equation*}
				\alpha_d^{*,j} = \sup \{\alpha \in \mathbb{R} \;:\;  \mathbf{s}_k^j(\alpha) \geq 0 \}
			\end{equation*}
			and define $\alpha^{*,j} := \{ \alpha_p^{*,j}, \alpha_d^{*,j}\} $\;
			\If{ $(\mathbf{x}_k(\alpha^{*,j}),\mathbf{y}_k(\alpha^{*,j}),\mathbf{s}_k(\alpha^{*,j}))$ is a solution of \eqref{eq:LP_r_problem} \label{eq:alg_stopping_condition} } 
			{Stop} 
			}

			Find $\alpha_k^j$ as the maximum $\alpha \in [0,1]$ s.t. {\editSte for all $\alpha \in [0,\alpha_k^j]$}
			\begin{equation}\label{eq:IPM_next_Step}
				\begin{split}
					& (\mathbf{x}_k^j(\alpha),\mathbf{y}^j_k(\alpha),\mathbf{s}^j_k(\alpha) ) \in \mathcal{N}_k(\bar {\gamma},\underline{\gamma},\gamma_p,\gamma_d)   \hbox{ and }  \\
					& \mathbf{x}^j_k(\alpha)^T\mathbf{s}_k^j(\alpha) \leq (1 -(1-\bar{\sigma})\alpha ){\mathbf{x}_k^j}^T\mathbf{s}_k^j \hbox{\; }
				\end{split}
			\end{equation} \label{algline:step_lenght} 
			
			Set $\begin{bmatrix}
				\mathbf{x}_{k}^{j+1}\\
				\mathbf{y}_{k}^{j+1} \\
				\mathbf{s}_{k}^{j+1} 
			\end{bmatrix}=\begin{bmatrix}
				\mathbf{x}_{k}^{j}\\
				\mathbf{y}_{k}^{j} \\
				\mathbf{s}_{k}^{j} 
			\end{bmatrix}+\begin{bmatrix}
				\alpha_k^j \Delta \mathbf{x}_k^j\\
				\alpha_k^j \Delta \mathbf{y}_k^j \\
				\alpha_k^j \Delta \mathbf{s}_k^j 
			\end{bmatrix}$ \label{algline:newton_step} \;
			Compute the infeasibilities $\boldsymbol{\xi}^{j+1}_{d,k}$, $\boldsymbol{\xi}^{j+1}_{p,k}$ and barrier parameter $\mu^{j+1}_k:={\mathbf{x}^{j+1}_k}^T\mathbf{s}^{j+1}_k/n$  \label{algline:new_complementarity}\;
			Update the iteration counter: $j := j + 1$.
		}
	\end{algorithm}

\section{Convergence and complexity} \label{sec:convergence}
In this section, we show that the particular inexact IPM in Algorithm \ref{alg:IPM}, used as inner solver  in Algorithm \ref{alg:PS-MF-IPM}, is convergent. Moreover, at the end of the present section, we show that such IPM converges to an $\varepsilon-$accurate solution in a polynomial number of iterations. Our implant of the proof is inspired by the works \cite{MR3010439,MR1608030,Cornelis_Vanrose,MR2899152,MR1242461, MR1785643} but consistently differs from the hypothesis and techniques used there.

The PPM iteration counter $k$ is fixed through this section and, for the sake of readability, is used only when writing the fixed PPM iteration $(\mathbf{x}_k,\mathbf{y}_k,\mathbf{s}_k)$ and not in the context of the IPM iterations $(\mathbf{x}^j_k,\mathbf{y}^j_k,\mathbf{s}^j_k)$. 

We start from analysing the progress made in a single Newton iteration. Using \eqref{eq:KKT_map}, \eqref{eq:Newton_System}, and \eqref{eq:Newton_details} we obtain
\begin{equation}\label{eq:dual_progress}
	\begin{split}
		&\rho(\mathbf{x}^{j}(\alpha)-\mathbf{x}_{ k})-A^T\mathbf{y}^{j}{(\alpha)}-\mathbf{s}^{j}{(\alpha)}  + \mathbf{c} \\
		& = (\rho(\mathbf{x}^{j}-\mathbf{x}_{ k})-A^T\mathbf{y}^{j}-\mathbf{s}^{j} + \mathbf{c})  +\alpha (\rho \Delta \mathbf{x}^{j} - A^T \Delta \mathbf{y}^{j} - \Delta \mathbf{s}^{j} )  \\
		&= (1-\alpha)(\rho(\mathbf{x}^{j}-\mathbf{x}_{ k})-A^T\mathbf{y}^{j}-\mathbf{s}^{j} + \mathbf{c}),
	\end{split} \;\;\;
\end{equation}
whereas, using \eqref{eq:KKT_map} {\edit and \eqref{eq:inexact_Newton_System}} we have
\begin{equation} \label{eq:primal_progress}
	\begin{split}
		& A\mathbf{x}^j(\alpha) +\delta(\mathbf{y}^j(\alpha)  -\mathbf{y_{k}})-\mathbf{b}  \\
		&  =(A\mathbf{x}^j +\delta(\mathbf{y}^j - \mathbf{y_{ k}})-\mathbf{b})	+ \alpha (A \Delta \mathbf{x}^{j}+\delta \Delta \mathbf{y}^{j})  \\
		&  = (1- \alpha)(A\mathbf{x}^j +\delta(\mathbf{y}^j - \mathbf{y_{ k}})-\mathbf{b}) + \alpha \boldsymbol{\boldsymbol{\zeta}}^{j}.
	\end{split}
\end{equation}

\noindent The last block equation in \eqref{eq:inexact_Newton_System} yields
\begin{equation} \label{eq:third_block_1}
		 (\mathbf{s}^j)^T\Delta \mathbf{x}^{j}+(\mathbf{x}^j)^T\Delta \mathbf{s}^{j}
		 =- (\mathbf{x}^j)^{T}\mathbf{s}^j + \sigma n \mu^j 
		 =  (\sigma-1)(\mathbf{x}^j)^{T}\mathbf{s}^j
\end{equation}
and
\begin{equation}\label{eq:third_block_2}
	  s_i \Delta x_i  + x_i \Delta s_i = \sigma \frac{\mathbf{x}^T\mathbf{s}}{n} - x_is_i\; \; .
\end{equation}

\noindent Finally, using  \eqref{eq:third_block_1}, we state the following identity
\begin{equation} \label{eq:scalar_product_with_step}
	(\mathbf{x}^j+\alpha \Delta \mathbf{x}^j)^T(\mathbf{s}^j+\alpha \Delta \mathbf{s}^j)= (\mathbf{x}^j)^T\mathbf{s}^j(1+\alpha(\sigma-1))+\alpha^2 (\Delta \mathbf{x}^j)^T\Delta \mathbf{s}^j.
\end{equation}

{\editSte
With the next Theorem \ref{th:IPM_well_definitness}  we prove that Algorithm \ref{alg:IPM} is well-defined: at each iteration, there exist a non-empty interval of values for the stepsize $\alpha$ such that the next iterate still lies in the neighbourhood $\mathcal N_k(\bar\gamma,\underline\gamma,\gamma_p,\gamma_d)$ and such that the complementarity product $(\mathbf x_k^j)^T\mathbf s_k^j$ is reduced by a sufficient amount, as required in \eqref{eq:IPM_next_Step}. 
}

\begin{theorem} \label{th:IPM_well_definitness}
	{\editSte Let us suppose that $(\mathbf{x}^j,\mathbf{y}^j,\mathbf{s}^j) \in  \mathcal{N}_k(\bar {\gamma},\underline{\gamma},\gamma_p,\gamma_d)$ s.t.  $(\mathbf{x}^j)^T\mathbf{s}^j>0$ is given. If the stopping conditions at Line \ref{eq:alg_stopping_condition} of Algorithm~\ref{alg:IPM} are not satisfied, then there exists $0<\hat{\alpha}^j {\edit < \alpha^{*,j}}$ such that conditions \eqref{eq:IPM_next_Step} are satisfied for all $\alpha \in [0,\hat{\alpha}^j]$.}
\end{theorem}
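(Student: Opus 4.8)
The plan is to treat each defining inequality of the neighbourhood $\mathcal{N}_k(\bar\gamma,\underline\gamma,\gamma_p,\gamma_d)$ from \eqref{eqn:neighbourhood}, together with the sufficient-decrease requirement of \eqref{eq:IPM_next_Step}, as a univariate function of the stepsize $\alpha$, and to show that each one holds on a non-empty interval $[0,\alpha_\ell]$ with $\alpha_\ell>0$. Setting $\hat\alpha^j$ equal to the minimum of these finitely many thresholds (intersected with the range that guarantees strict positivity) then yields the claim. Strict positivity of $(\mathbf{x}^j,\mathbf{s}^j)$ ensures $\alpha^{*,j}>0$, so that an interval of the form $(0,\alpha^{*,j})$ is available; the hypothesis that the stopping test at Line \ref{eq:alg_stopping_condition} fails rules out the degenerate case in which the boundary point itself is already optimal, and hence guarantees that a strictly interior $\hat\alpha^j<\alpha^{*,j}$ is what is genuinely needed.

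For the sufficient-decrease and infeasibility conditions I would rely on the exact progress identities derived above. The decrease condition follows from \eqref{eq:scalar_product_with_step}: subtracting the target $(1-(1-\bar\sigma)\alpha)(\mathbf{x}^j)^T\mathbf{s}^j$ leaves $\alpha(\bar\sigma-\sigma)(\mathbf{x}^j)^T\mathbf{s}^j-\alpha^2(\Delta\mathbf{x}^j)^T\Delta\mathbf{s}^j$, whose linear coefficient is strictly positive because $\sigma<\bar\sigma$, so the expression is non-negative for all small $\alpha$. For the primal infeasibility bound I would combine \eqref{eq:primal_progress} with the triangle inequality, the neighbourhood bound $\gamma_p\|A\mathbf{x}^j+\delta(\mathbf{y}^j-\mathbf{y}_k)-\mathbf{b}\|\le(\mathbf{x}^j)^T\mathbf{s}^j$, and the inexactness estimate $\|\boldsymbol{\zeta}^j\|\le C_\text{inexact}(\mathbf{x}^j)^T\mathbf{s}^j$ from Assumption \ref{ass:residual}, obtaining an upper bound on the residual of the form $(\mathbf{x}^j)^T\mathbf{s}^j\,(1-\alpha(1-\gamma_pC_\text{inexact}))$; comparing this with the lower bound for $\mathbf{x}^j(\alpha)^T\mathbf{s}^j(\alpha)$ read off from \eqref{eq:scalar_product_with_step}, the decisive linear coefficient is $\sigma-\gamma_pC_\text{inexact}$, which is positive precisely by the input requirement $\gamma_pC_\text{inexact}<\sigma$. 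The dual infeasibility bound is easier, since \eqref{eq:dual_progress} shows the dual residual contracts exactly by the factor $(1-\alpha)$, leaving the governing linear coefficient equal to $\sigma>0$.

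The delicate part, and the one I expect to be the main obstacle, is the componentwise centrality condition $\underline\gamma\,\mathbf{x}(\alpha)^T\mathbf{s}(\alpha)/n\le x_i(\alpha)s_i(\alpha)\le\bar\gamma\,\mathbf{x}(\alpha)^T\mathbf{s}(\alpha)/n$. Here the constant term of the relevant function can vanish when the current iterate lies exactly on the boundary of the neighbourhood, so continuity alone is insufficient and the first-order behaviour must be examined. Using \eqref{eq:third_block_2} and \eqref{eq:third_block_1} I would compute the derivative at $\alpha=0$ of $g_i(\alpha):=x_i(\alpha)s_i(\alpha)-\underline\gamma\,\mathbf{x}(\alpha)^T\mathbf{s}(\alpha)/n$ and of the analogous upper-bound quantity; evaluating these on the respective boundaries $x_is_i=\underline\gamma\mu^j$ and $x_is_i=\bar\gamma\mu^j$ gives $\sigma(1-\underline\gamma)\mu^j>0$ and $\sigma(\bar\gamma-1)\mu^j>0$, where the signs are forced by $\underline\gamma<1<\bar\gamma$ and $\sigma>0$. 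Thus wherever the constant term is zero the first-order term points strictly into the neighbourhood, while wherever it is positive continuity suffices.

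The remaining technical effort is to convert these qualitative statements into an explicit positive threshold that is uniform over all indices $i$. I would do this by rewriting each centrality inequality as an explicit quadratic inequality in $\alpha$ and bounding the quadratic coefficients $\Delta x_i^j\Delta s_i^j$ and $(\Delta\mathbf{x}^j)^T\Delta\mathbf{s}^j$ in terms of $\mu^j$; the strict positivity of the dominating linear coefficients established above then yields concrete lower bounds on the admissible $\alpha$. Taking $\hat\alpha^j$ to be the minimum of all the thresholds produced by the decrease condition, the two infeasibility conditions, and the centrality conditions, and keeping $\hat\alpha^j<\alpha^{*,j}$ to preserve positivity, gives a strictly positive stepsize on $[0,\hat\alpha^j]$ for which all requirements in \eqref{eq:IPM_next_Step} hold, completing the argument.
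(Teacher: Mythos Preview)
Your plan is essentially the paper's: introduce one scalar function per constraint (lower centrality, upper centrality, sufficient decrease, dual and primal infeasibility), show each has non-negative constant term and strictly positive linear coefficient at $\alpha=0$, and take $\hat\alpha^j$ as the minimum of the resulting thresholds. Your identification of the decisive linear coefficients $\sigma(1-\underline\gamma)$, $\sigma(\bar\gamma-1)$, $(\bar\sigma-\sigma)$, $\sigma$, and $(\sigma-\gamma_pC_\text{inexact})$ matches the paper exactly.

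Two points where you diverge. First, the paper does \emph{not} bound the quadratic coefficients $\Delta x_i\Delta s_i$ in terms of $\mu^j$ for this theorem; a bare continuity argument suffices for existence of each threshold, and the explicit quadratic bounds are postponed to the later convergence and complexity results. Your last paragraph is therefore doing more than the theorem requires.

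Second, and more substantively, your handling of positivity is different. You propose to simply cap $\hat\alpha^j$ below $\alpha^{*,j}$, which works but does not use the stopping hypothesis at all---your sentence about that hypothesis ``ruling out the degenerate case'' is not really an argument. The paper instead defines $\hat\alpha^j$ purely as the minimum of the five thresholds (and $1$), with no reference to $\alpha^{*,j}$, and then \emph{proves} $\alpha^{*,j}>\hat\alpha^j$ by contradiction: if $\alpha^{*,j}\le\hat\alpha^j$, some product $x_\ell(\alpha^{*,j})s_\ell(\alpha^{*,j})=0$, so the lower-centrality inequality $f_\ell(\alpha^{*,j})\ge0$ forces $\mathbf{x}(\alpha^{*,j})^T\mathbf{s}(\alpha^{*,j})=0$, and then $g_p,g_d\ge0$ force both residuals to vanish, i.e.\ the boundary point solves \eqref{eq:LP_r_problem}, contradicting the assumption that Line~\ref{eq:alg_stopping_condition} did not trigger. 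This is where the stopping hypothesis enters, and it buys the stronger conclusion that positivity is never the binding constraint---a fact used downstream when lower-bounding $\alpha^j$ in the convergence and complexity proofs.
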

\begin{proof}
	In this proof we omit also the IPM iterate counter $j$, i.e. $(\mathbf{x}^j,\mathbf{y}^j,\mathbf{s}^j)\equiv (\mathbf{x},\mathbf{y},\mathbf{s})$. Let us define the following functions, for all $i=1,\dots,n$
	\begin{align*}
		f_i(\alpha):= & (x_i+\alpha \Delta x_i)(s_i+\alpha \Delta s_i)- \underline{\gamma}(\mathbf{x}+\alpha \Delta \mathbf{x})^T(\mathbf{s}+\alpha \Delta \mathbf{s})/n, \\
		\bar{f}_i(\alpha):= & \bar{\gamma}(\mathbf{x}+\alpha \Delta \mathbf{x})^T(\mathbf{s}+\alpha \Delta \mathbf{s})/n-(x_i+\alpha \Delta x_i)(s_i+\alpha \Delta s_i),\\ 
		h(\alpha):= &(1 -(1-\bar{\sigma})\alpha) \mathbf{x}^T\mathbf{s} - (\mathbf{x}+\alpha \Delta \mathbf{x})^T(\mathbf{s}+\alpha \Delta \mathbf{s}), \\
		g_d(\alpha):= & (\mathbf{x}+\alpha \Delta \mathbf{x})^T(\mathbf{s}+\alpha \Delta \mathbf{s})  \nonumber &
		\\ &- \gamma_d \| \rho(\mathbf{x}+\alpha \Delta \mathbf{x} - \mathbf{x}_k) - A^T(\mathbf{y}+\alpha \Delta \mathbf{y}) -(\mathbf{s}+\alpha \Delta \mathbf{s})+\mathbf{c}\|, \\
		g_p(\alpha):= & (\mathbf{x}+\alpha \Delta \mathbf{x})^T(\mathbf{s}+\alpha \Delta \mathbf{s})   \nonumber \\
		              &- \gamma_p \| A(\mathbf{x}+\alpha \Delta \mathbf{x})+ \delta (\mathbf{y}+\alpha \Delta \mathbf{y}- \mathbf{y}_k) -\mathbf{b}  \| .
	\end{align*}

\noindent Using \eqref{eq:dual_progress} in the expressions of $g_d(\alpha)$ we have
\begin{equation*}
	g_d(\alpha) =(\mathbf{x}+\alpha \Delta \mathbf{x})^T(\mathbf{s}+\alpha \Delta \mathbf{s}) -\gamma_d (1-\alpha) \|\rho(\mathbf{x}-\mathbf{x}_k)-A^T\mathbf{y}-\mathbf{s} + \mathbf{c}\|,
\end{equation*}
whereas using \eqref{eq:primal_progress} in the expressions of $g_p(\alpha)$ we have
\begin{equation*}
	g_p(\alpha) \geq(\mathbf{x}+\alpha \Delta \mathbf{x})^T(\mathbf{s}+\alpha \Delta \mathbf{s}) -   \gamma_p ((1-\alpha) \|A\mathbf{x} +\delta(\mathbf{y} - \mathbf{y}_k)-\mathbf{b}\| + \alpha \|\boldsymbol{\boldsymbol{\zeta}}\|).
\end{equation*}

{\editSte We start proving that} there exists $\hat{\alpha}^{j}>0$ such that
\begin{equation*}
	f_i(\alpha)\geq 0, \; \bar{f}_i(\alpha)\geq 0, \; h(\alpha)\geq 0, \; g_p(\alpha)\geq 0, \; g_d(\alpha)\geq 0
\end{equation*}
for all $i = 1, \dots, n$ and for all $\alpha \in [0, \hat{\alpha}^j]$. In the following we will use exensively the identity \eqref{eq:scalar_product_with_step}. We have
\begin{equation} \label{eq:ineq_1}
	\begin{split}
		 f_{i}(\alpha)= &\underbrace{(1-\alpha)(x_is_i-\underline{\gamma} \frac{\mathbf{x}^T\mathbf{s}}{n})}_{\geq 0}+\alpha^2(\Delta x_i \Delta s_i - \underline{\gamma}\frac{(\Delta \mathbf{x})^T \Delta \mathbf{s}}{n}) + \alpha \sigma (1-\underline{\gamma})\frac{\mathbf{x}^T\mathbf{s}}{n} \\
		& \geq \alpha^2(\Delta x_i \Delta s_i - \underline{\gamma}\frac{(\Delta \mathbf{x})^T \Delta \mathbf{s}}{n}) + \alpha \sigma (1-\underline{\gamma})\frac{\mathbf{x}^T\mathbf{s}}{n}.
	\end{split}
\end{equation}
Since $\mathbf{x}^T\mathbf{s}>0$, using a simple continuity argument, we can infer the existence of a small enough 
$\underline{f}_i>0$ s.t. $ f_{i}(\alpha)\geq 0$ for all $\alpha \in [0, \underline{f}_i]$. Reasoning analogously, we have
\begin{equation} \label{eq:ineq_2}
	\begin{split}
		& \bar{f}_{i}(\alpha)= \underbrace{(1-\alpha)(\bar{\gamma} \frac{\mathbf{x}^T\mathbf{s}}{n} -x_is_i)}_{\geq 0}+\alpha^2(\bar{\gamma}\frac{(\Delta \mathbf{x})^T \Delta \mathbf{s}}{n}-\Delta x_i \Delta s_i ) + \alpha \sigma (\bar{\gamma}-1)\frac{\mathbf{x}^T\mathbf{s}}{n} \\
		& \geq \alpha^2(\bar{\gamma}\frac{(\Delta \mathbf{x})^T \Delta \mathbf{s}}{n}-\Delta x_i \Delta s_i ) + \alpha \sigma (\bar{\gamma}-1)\frac{\mathbf{x}^T\mathbf{s}}{n},
	\end{split}
\end{equation}
and hence there exists a small enough 
$\bar{f}_i>0$ s.t. $ \bar{f}_{i}(\alpha)\geq 0$ for all $\alpha \in [0, \bar{f}_i]$.

Concerning $h(\alpha)$, we have
\begin{equation} \label{eq:ineq_3}
	h(\alpha)= \mathbf{x}^T\mathbf{s}(\bar{\sigma}-\sigma) \alpha- \alpha^2 (\Delta \mathbf{x})^T\Delta \mathbf{s},
\end{equation}
and, since $\mathbf{x}^T\mathbf{s}(\bar{\sigma}-\sigma)>0$, there exists $\hat{h}>0$ small enough s.t. $h(\alpha)\geq 0$ for all $\alpha \in [0, \hat{h}]$.

Concerning $g_d(\alpha)$, we have
\begin{equation} \label{eq:ineq_4}
	\begin{split}
		g_d(\alpha) = & \underbrace{(1-\alpha)(\mathbf{x}^T\mathbf{s}- \gamma_d \|(\rho(\mathbf{x}-\mathbf{x}_k)-A^T\mathbf{y}-\mathbf{s} + \mathbf{c})\|)}_{\geq 0} + \alpha \sigma \mathbf{x}^T\mathbf{s} +\alpha^2 (\Delta \mathbf{x})^T\Delta \mathbf{s} \\
		& \geq \alpha \sigma \mathbf{x}^T\mathbf{s} +\alpha^2 (\Delta \mathbf{x})^T\Delta \mathbf{s},
	\end{split}
\end{equation}
and hence there exists $\hat{g}_d>0$ small enough s.t. $g_d(\alpha)\geq 0$ for all $\alpha \in [0, \hat{g}_d]$.

Finally, concerning $g_p(\alpha)$, we have
\begin{equation} \label{eq:ineq_5}
	\begin{split}
		g_p(\alpha) \geq & \underbrace{(1-\alpha)(\mathbf{x}^T\mathbf{s}- \gamma_p \|(A\mathbf{x} +\delta(\mathbf{y} - \mathbf{y}_k)-\mathbf{b})\|)}_{\geq 0} + \alpha \sigma \mathbf{x}^T\mathbf{s} +\\
		&+\alpha^2 (\Delta \mathbf{x})^T\Delta \mathbf{s} - \alpha\gamma_p \|\boldsymbol{\boldsymbol{\zeta}}\| \\
		& \geq \alpha (\sigma - \gamma_p C_\text{inexact} )\mathbf{x}^T\mathbf{s} +\alpha^2 (\Delta \mathbf{x})^T\Delta \mathbf{s},
	\end{split}
\end{equation}
and hence there exists $\hat{g}_p>0$ small enough s.t. $g_p(\alpha)\geq 0$ for all $\alpha \in [0, \hat{g}_p]$. 
{\editSte
	Let us define
\begin{equation*}
	\hat{\alpha}^j = \min \{ \min_{i} \underline{f}_i, \;\; \min_{i} \bar{f}_i,  \;\; \hat{h}, \;\; \hat{g}_d, \;\; \hat{g}_p, \; 1\} >0.
\end{equation*}
To prove the thesis, it remains to show that $\alpha^{*,j}>\hat{\alpha}^j$, i.e.\ that
\begin{equation*}
(\mathbf{x}(\alpha),\mathbf{y}(\alpha),\mathbf{s}(\alpha) ) \in \mathbb{R}^n_{>0} \times \mathbb{R}^m \times \mathbb{R}^n_{>0}	\hbox{ for all } \alpha \in [0,\hat{\alpha}^j].
\end{equation*}
To this aim, let us suppose by contradiction that $\alpha^{*,j} \leq \hat{\alpha}^j$. By  definition of $\alpha^{*,j}$, there exists $\bar{\ell} \in \{1, \dots,n\}$ s.t. $(x_{\bar{\ell}}+\alpha^{*,j} \Delta x_{\bar{\ell}})(s_{\bar{\ell}}+\alpha^{*,j} \Delta s_{\bar{\ell}})=0$. We have hence
\begin{equation*}
	f_{\bar{\ell}}(\alpha^{*,j})= - \underline{\gamma}(\mathbf{x}\big(\alpha^{*,j})\big)^T\mathbf{s}(\alpha^{*,j})/n \geq 0 \Rightarrow (\mathbf{x}\big(\alpha^{*,j})\big)^T\mathbf{s}(\alpha^{*,j}) =0. 
\end{equation*}
From the above implication,  using $g_d(\alpha^{*,j})$ and $g_p(\alpha^{*,j})$, we obtain that
\begin{equation*}
	\begin{split}
		& A\mathbf{x}(\alpha^{*,j}) +\delta(\mathbf{y}(\alpha^{*,j}) -\mathbf{y}_k) - \mathbf{b} =0 \\
		& \rho(\mathbf{x}(\alpha^{*,j}) - \mathbf{x}_k)-A^T\mathbf{y}(\alpha^{*,j}) - \mathbf{s}(\alpha^{*,j}) +\mathbf{c}=0, 
	\end{split}
\end{equation*}
i.e.\ $(\mathbf{x}(\alpha^{*,j}),\mathbf{y}(\alpha^{*,j}),\mathbf{s}(\alpha^{*,j}))$ is a solution of problem \eqref{eq:LP_r_problem}. We have hence obtained a contradiction since we are supposing that Algorithm~\ref{alg:IPM} did not stop at Line \ref{eq:alg_stopping_condition}. 
}
\end{proof}

{\editSte
Before proving the convergence of Algorithm \ref{alg:IPM} we would like to emphasize that  the above proof complements and expands \cite[Remark 3.1]{MR1242461}.

The next two results establish that Algorithm \ref{alg:IPM}  converges to a solution of problem \eqref{eq:LP_r_problem}. This is done by establishing that the right-hand sides of the Newton systems are uniformly bounded and by showing that the complementarity product $(\mathbf{x}^j)^T\mathbf{s}^j$ cannot be bounded away from zero.
}

\begin{corollary}\label{rem:uniform_boundedness_rhs}
	The right-hand sides of the Newton systems are uniformly bounded. 
\end{corollary}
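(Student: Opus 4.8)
The plan is to show that every block of the right-hand side in \eqref{eq:inexact_Newton_System} is controlled by the complementarity product $(\mathbf{x}^j)^T\mathbf{s}^j$, and then to argue that this product is uniformly bounded over all inner iterations $j$ (for the fixed PPM index $k$).

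First I would establish the monotone decay of the complementarity product. By construction the stepsize $\alpha_k^j$ in Algorithm~\ref{alg:IPM} satisfies \eqref{eq:IPM_next_Step} with $\alpha_k^j \in [0,1]$ and $\bar{\sigma} \in (0,1)$, so that
\begin{equation*}
(\mathbf{x}^{j+1})^T\mathbf{s}^{j+1} \leq \big(1-(1-\bar{\sigma})\alpha_k^j\big)(\mathbf{x}^j)^T\mathbf{s}^j \leq (\mathbf{x}^j)^T\mathbf{s}^j .
\end{equation*}
Hence $\{(\mathbf{x}^j)^T\mathbf{s}^j\}_j$ is non-increasing and $(\mathbf{x}^j)^T\mathbf{s}^j \leq (\mathbf{x}^0)^T\mathbf{s}^0 = n\mu_k^0$ for every $j$.

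Next I would bound the three exact blocks, recalling that $\boldsymbol{\xi}_d^j,\boldsymbol{\xi}_p^j,\boldsymbol{\xi}_{\mu,\sigma}^j$ are the blocks of $-F_k^{\mu,\sigma}$ from \eqref{eq:KKT_map}. Since $(\mathbf{x}^j,\mathbf{y}^j,\mathbf{s}^j)\in\mathcal{N}_k(\bar{\gamma},\underline{\gamma},\gamma_p,\gamma_d)$ for every $j$, the last two defining inequalities of the neighbourhood \eqref{eqn:neighbourhood} give directly
\begin{equation*}
\|\boldsymbol{\xi}_d^j\| \leq \frac{1}{\gamma_d}(\mathbf{x}^j)^T\mathbf{s}^j, \qquad \|\boldsymbol{\xi}_p^j\| \leq \frac{1}{\gamma_p}(\mathbf{x}^j)^T\mathbf{s}^j .
\end{equation*}
For the complementarity block $\boldsymbol{\xi}_{\mu,\sigma}^j = \sigma\mu^j\mathbf{e}-X^jS^j\mathbf{e}$, I would use the upper bound $x_i^j s_i^j \leq \bar{\gamma}\,(\mathbf{x}^j)^T\mathbf{s}^j/n = \bar{\gamma}\mu^j$ from the neighbourhood together with $\mu^j \leq \mu_k^0$, obtaining $\|\boldsymbol{\xi}_{\mu,\sigma}^j\| \leq (\sigma+\sqrt{\bar{\gamma}})\sqrt{n}\,\mu_k^0$. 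Finally the inexact perturbation $\boldsymbol{\zeta}^j$ is controlled by Assumption~\ref{ass:residual}, namely $\|\boldsymbol{\zeta}^j\| \leq C_\text{inexact}(\mathbf{x}^j)^T\mathbf{s}^j \leq C_\text{inexact}\, n\mu_k^0$.

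Combining these estimates, each block of the right-hand side of \eqref{eq:inexact_Newton_System} is bounded by a constant depending only on the fixed data $n,\gamma_p,\gamma_d,\bar{\gamma},\sigma,C_\text{inexact}$ and on the initial complementarity $\mu_k^0$, which yields the claimed uniform boundedness. I do not expect any genuine obstacle: the only step requiring care is the derivation of the monotone decay of $(\mathbf{x}^j)^T\mathbf{s}^j$ from \eqref{eq:IPM_next_Step}, since all remaining bounds follow immediately from membership in $\mathcal{N}_k$ and from Assumption~\ref{ass:residual}.
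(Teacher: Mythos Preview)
Your proposal is correct and follows essentially the same approach as the paper: use the monotone decay $(\mathbf{x}^j)^T\mathbf{s}^j\le(\mathbf{x}^0)^T\mathbf{s}^0$ together with the neighbourhood inequalities in \eqref{eqn:neighbourhood} to bound $\boldsymbol{\xi}_p^j,\boldsymbol{\xi}_d^j$, and the centrality bound $x_is_i\le\bar{\gamma}\mu$ to bound $\boldsymbol{\xi}_{\mu,\sigma}^j$. Your additional bound on $\boldsymbol{\zeta}^j$ via Assumption~\ref{ass:residual} is a harmless extra (the paper's proof omits it), and your slightly sharper constant $\sqrt{\bar{\gamma}}$ versus the paper's $\bar{\gamma}$ in the complementarity block is immaterial.
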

\begin{proof}
	As a consequence of Theorem \ref{th:IPM_well_definitness}, we can suppose the existence of a sequence of iterates  $\{(\mathbf{x}^j,\mathbf{y}^j,\mathbf{s}^j)\}_{j \in \mathbb{N}}$ produced by Algorithm \ref{alg:IPM} s.t. $$(\mathbf{x}^j,\mathbf{y}^j,\mathbf{s}^j) \in \mathcal{N}_k(\bar {\gamma},\underline{\gamma},\gamma_p,\gamma_d). $$
	Since by construction $(\mathbf{x}^j)^T\mathbf{s}^j \leq (\mathbf{x}^0)^T\mathbf{s}^0 $, we have from \eqref{eqn:neighbourhood}
	\begin{equation*}
		\begin{split}
			& \|A\mathbf{x}^j+\delta(\mathbf{y}^j-\mathbf{y}_k)-\mathbf{b}\| \leq (\mathbf{x}^0)^T\mathbf{s}^0/\gamma_p, \\
			& \|\rho(\mathbf{x}^j-\mathbf{x}_k)-A^T\mathbf{y}^j-\mathbf{s}^j + \mathbf{c}\| \leq (\mathbf{x}^0)^T\mathbf{s}^0/\gamma_d. 
		\end{split}
	\end{equation*}
	Moreover, we have
	\begin{equation*}
		\begin{split}
			& \|S^jX^j\mathbf{e}-\sigma \mu^j \mathbf{e}\| \leq \|S^jX^j\mathbf{e}\|+\sigma \mu^j \|\mathbf{e}\| \\
			& \leq \frac{\bar{\gamma}}{\sqrt{n}} (\mathbf{x}^j)^T\mathbf{s}^j +  \frac{\sigma}{\sqrt{n}} (\mathbf{x}^j)^T\mathbf{s}^j \leq \frac{\bar{\gamma}+\sigma}{\sqrt{n}} (\mathbf{x}^0)^T\mathbf{s}^0.
		\end{split} 
	\end{equation*} 
\end{proof}

\begin{theorem} \label{th:convergence}
 Algorithm \ref{alg:IPM} produces a sequence of iterates in $\mathcal{N}_k(\bar {\gamma},\underline{\gamma},\gamma_p,\gamma_d)$ s.t. $\lim  (\mathbf{x}^j)^T\mathbf{s}^j=0$, i.e. $(\mathbf{x}^j,\mathbf{y}^j,\mathbf{s}^j) $ converges to a solution of problem \eqref{eq:LP_r_problem}. 
\end{theorem}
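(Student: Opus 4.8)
The plan is to argue by contradiction. First I would record that the sequence $\mu^j := (\mathbf{x}^j)^T\mathbf{s}^j/n$ is well defined and monotonically non-increasing: Theorem~\ref{th:IPM_well_definitness} guarantees a strictly positive admissible stepsize at every iteration, and the acceptance rule \eqref{eq:IPM_next_Step} forces $\mu^{j+1}\le(1-(1-\bar{\sigma})\alpha_k^j)\mu^j\le\mu^j$. Hence $\mu^j\to\mu^\star\ge0$, and it suffices to exclude $\mu^\star>0$. So I assume, for contradiction, that $\mu^j\ge\bar{\mu}>0$ for all $j$, and I aim to produce a uniform lower bound $\alpha_k^j\ge\bar{\alpha}>0$ on the stepsize; combined with \eqref{eq:IPM_next_Step} this yields $\mu^{j+1}\le(1-(1-\bar{\sigma})\bar{\alpha})\mu^j$, i.e.\ geometric decay of $\mu^j$ to zero, contradicting $\mu^j\ge\bar{\mu}$.

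The crucial intermediate step, and the place where the primal--dual regularization is genuinely used, is to show that the whole sequence $\{(\mathbf{x}^j,\mathbf{y}^j,\mathbf{s}^j)\}$ together with the Newton directions stays in a compact set on which the normal matrix \eqref{eqn:normal_equations_matrix} is uniformly well conditioned. Since the iterates lie in $\mathcal{N}_k(\bar{\gamma},\underline{\gamma},\gamma_p,\gamma_d)$ and $\mu^j\le\mu^0$, the neighbourhood conditions \eqref{eqn:neighbourhood} bound the primal and dual residuals $\mathbf{r}_p^j:=A\mathbf{x}^j+\delta(\mathbf{y}^j-\mathbf{y}_k)-\mathbf{b}$ and $\mathbf{r}_d^j:=\rho(\mathbf{x}^j-\mathbf{x}_k)-A^T\mathbf{y}^j-\mathbf{s}^j+\mathbf{c}$ uniformly in $j$. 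Forming the combination $(\mathbf{x}^j)^T\mathbf{r}_d^j+(\mathbf{y}^j)^T\mathbf{r}_p^j$, the cross terms in $A$ cancel and the regularization contributes a coercive quadratic, giving an inequality of the form $\rho\|\mathbf{x}^j\|^2+\delta\|\mathbf{y}^j\|^2\le a\|\mathbf{x}^j\|+b\|\mathbf{y}^j\|+c$ with constants depending only on the data, on $(\mathbf{x}_k,\mathbf{y}_k)$ and on $\mu^0$. Because $\rho,\delta>0$, this bounds $\|\mathbf{x}^j\|$ and $\|\mathbf{y}^j\|$, and then $\mathbf{s}^j=\rho(\mathbf{x}^j-\mathbf{x}_k)-A^T\mathbf{y}^j+\mathbf{c}-\mathbf{r}_d^j$ is bounded as well. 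Under the contradiction hypothesis, $x_i^js_i^j\ge\underline{\gamma}\mu^j\ge\underline{\gamma}\bar{\mu}$ together with the upper bounds on $\mathbf{x}^j,\mathbf{s}^j$ forces each $x_i^j$ and $s_i^j$ to stay bounded away from zero. Consequently $\Theta^{-1}+\rho I=\mathrm{diag}(s_i^j/x_i^j)+\rho I$ is uniformly bounded above and below, so $S_{\rho,\delta}\succeq\delta I$ is uniformly invertible; combining this with the uniform bound on the right-hand sides from Corollary~\ref{rem:uniform_boundedness_rhs} and with the inexactness bound \eqref{eq:inexact_assumption}, the direction $\Delta\mathbf{y}^j$, and hence $\Delta\mathbf{x}^j$ and $\Delta\mathbf{s}^j$ via \eqref{eq:normal_sol_2_bis}--\eqref{eq:normal_sol_3_bis}, are uniformly bounded.

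With uniform bounds on the bilinear quantities $\Delta x_i^j\Delta s_i^j$ and $(\Delta\mathbf{x}^j)^T\Delta\mathbf{s}^j$ in hand, I would revisit the five threshold estimates \eqref{eq:ineq_1}--\eqref{eq:ineq_5} from the proof of Theorem~\ref{th:IPM_well_definitness}. As functions of $\alpha$, each of $f_i,\bar{f}_i,h,g_d,g_p$ is a quadratic whose linear coefficient is a fixed positive multiple of $\mu^j$ (respectively $\sigma(1-\underline{\gamma})\mu^j$, $\sigma(\bar{\gamma}-1)\mu^j$, $(\bar{\sigma}-\sigma)n\mu^j$, $\sigma n\mu^j$, and $(\sigma-\gamma_pC_\text{inexact})n\mu^j$, the last being positive by the standing assumption $\gamma_pC_\text{inexact}<\sigma$) and whose quadratic coefficient is controlled by the now-bounded $\Delta$-terms. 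Solving for the first sign change therefore yields a threshold of the form $(\text{const})\cdot\mu^j/(\text{bounded})\ge\bar{\alpha}>0$, uniformly in $j$, because $\mu^j\ge\bar{\mu}$. Taking $\bar{\alpha}$ to be the minimum of these thresholds and $1$ gives $\alpha_k^j\ge\bar{\alpha}$, which closes the contradiction as described and proves $\lim_j\mu^j=0$. Finally, letting $\mu^j\to0$ in the neighbourhood inequalities \eqref{eqn:neighbourhood} drives both residuals $\mathbf{r}_p^j,\mathbf{r}_d^j\to0$ and the complementarity $(\mathbf{x}^j)^T\mathbf{s}^j\to0$; since the iterates are bounded, any limit point satisfies the optimality system \eqref{eqn:optimal_solution_PPM}, i.e.\ is a solution of \eqref{eq:LP_r_problem}.

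The hard part is the compactness and uniform-conditioning step: in an infeasible method the iterates need not remain bounded a priori, and unregularized analyses sidestep this by other devices. Here the argument hinges precisely on exploiting $\rho,\delta>0$ to obtain coercivity of the iterates and on the contradiction hypothesis $\mu^j\ge\bar{\mu}$ to keep $x_i^j,s_i^j$ away from zero, so that the Newton directions, and therefore the stepsize thresholds, are uniformly controlled. Everything downstream is then a routine geometric-decay contradiction.
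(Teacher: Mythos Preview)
Your proposal is correct and follows the same overall architecture as the paper: argue by contradiction assuming $(\mathbf{x}^j)^T\mathbf{s}^j\ge\varepsilon^\star>0$, obtain a uniform bound on the Newton directions, feed this into the quadratic estimates \eqref{eq:ineq_1}--\eqref{eq:ineq_5} to extract a uniform lower bound on the stepsize, and finish with geometric decay of $(\mathbf{x}^j)^T\mathbf{s}^j$.

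The one substantive difference is in how the uniform bound on $\|(\Delta\mathbf{x}^j,\Delta\mathbf{y}^j,\Delta\mathbf{s}^j)\|$ is obtained. The paper simply invokes an external result (\cite[Th.~1]{MR3010439}) asserting that the Jacobian of the regularized Newton system has a uniformly bounded inverse, and combines this with Corollary~\ref{rem:uniform_boundedness_rhs}. You instead give a self-contained argument: you use the coercivity identity $(\mathbf{x}^j)^T\mathbf{r}_d^j+(\mathbf{y}^j)^T\mathbf{r}_p^j$ together with $\rho,\delta>0$ to bound the iterates, then use the contradiction hypothesis $x_i^js_i^j\ge\underline{\gamma}\bar{\mu}$ and these upper bounds to keep each $x_i^j,s_i^j$ away from zero, and finally bound the directions through \eqref{eq:normal_system}--\eqref{eq:normal_sol_3_bis}. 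This is a perfectly valid alternative, and arguably more transparent about \emph{where} the regularization is used; the paper's route is shorter but hides this behind the citation. One small remark: $S_{\rho,\delta}\succeq\delta I$ already gives uniform invertibility of the normal matrix without any lower bound on $x_i^j,s_i^j$; where you genuinely need $x_i^j$ bounded away from zero is in controlling the $X^{-1}\boldsymbol{\xi}_{\mu,\sigma}$ term appearing in $\bar{\boldsymbol{\xi}}_p$ and in \eqref{eq:normal_sol_2_bis}--\eqref{eq:normal_sol_3_bis}, so your argument is still needed, just not quite for the reason you state.
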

\begin{proof}
 Let us argue by contradiction supposing that there exists $\varepsilon^*>0$ s.t. $(\mathbf{x}^j)^T\mathbf{s}^j> \varepsilon^*$ for all $j \in \mathbb{N}$.\\
 
 \textbf{Claim 1} There exists a constant $C_1$ dependent only on $n$ s.t. $$\|  [\Delta \mathbf{x}^j,\; \Delta \mathbf{y}^j, \; \Delta \mathbf{s}^j ]^T\|\leq C_1 \hbox{ for all } j \in \mathbb{N}.$$ 

 The proof of this fact follows observing that the Newton matrices in \eqref{eq:inexact_Newton_System} satisfy all the hypothesis of \cite[Th. 1]{MR3010439}, i.e. they have a uniformly bounded inverse, and that the right-hand sides are uniformly bounded, see Corollary~\ref{rem:uniform_boundedness_rhs}. As a consequence, there exists another constant $C_2$ s.t.
 \begin{equation} \label{eq:uniform_bound_deltas}
 	\begin{split}
 		& |\Delta x_i^j \Delta s_i^j - \underline{\gamma}\frac{(\Delta \mathbf{x}^j)^T \Delta \mathbf{s}^j}{n}| \leq C_2, \\
 		& |\bar{\gamma}\frac{(\Delta \mathbf{x}^j)^T \Delta \mathbf{s}^j}{n}-\Delta x_i^j \Delta s_i^j | \leq C_2, \\
 		& |(\Delta \mathbf{x}^j)^T\Delta \mathbf{s}^j| \leq C_2.
 	\end{split}
 \end{equation} 
 
 \textbf{Claim 2} There exists $\alpha^*>0$ s.t. $\alpha^j \geq \alpha^*$ for all $j \in \mathbb{N}$. \\
 
 Using \eqref{eq:uniform_bound_deltas} in equations \eqref{eq:ineq_1}, \eqref{eq:ineq_2}, \eqref{eq:ineq_3}, \eqref{eq:ineq_4}, \eqref{eq:ineq_5} we have
 \begin{align*}
 	f_i(\alpha) \geq &\; \alpha^2(\Delta x_i^j \Delta s_i^j - \underline{\gamma}\frac{(\Delta \mathbf{x}^j)^T \Delta \mathbf{s}^j}{n}) + \alpha \sigma (1-\underline{\gamma})\frac{(\mathbf{x}^j)^T\mathbf{s}^j}{n} \\
 	              \geq &\; - C_2 \alpha^2 +\alpha \sigma (1-\underline{\gamma}) \varepsilon^*/n,\\ 
 	\bar{f}_i(\alpha) \geq &\; (\bar{\gamma}\frac{(\Delta \mathbf{x}^j)^T \Delta \mathbf{s}^j}{n}-\Delta x_i^j \Delta s_i^j ) + \alpha \sigma (\bar{\gamma}-1)\frac{(\mathbf{x}^j)^T\mathbf{s}^j}{n} \\
 	              \geq &\; - C_2 \alpha^2 +\alpha \sigma (\bar{\gamma}-1) \varepsilon^*/n,\\ 
 	 h(\alpha)= &\;(\mathbf{x}^j)^T\mathbf{s}^j(\bar{\sigma}-\sigma) \alpha- \alpha^2 (\Delta \mathbf{x}^j)^T\Delta \mathbf{s}^j \\
 	          \geq  &\; \varepsilon^*(\bar{\sigma}-\sigma) \alpha - C_2 \alpha^2,\\
 	 g_d(\alpha) \geq  &\;  \alpha \sigma (\mathbf{x}^j)^T\mathbf{s}^j +\alpha^2 (\Delta \mathbf{x}^j)^T\Delta \mathbf{s}^j \geq \varepsilon^*\sigma \alpha - C_2 \alpha^2,  \\    
 	 g_p(\alpha) \geq  &\;   \alpha (\sigma - q \gamma_p ) (\mathbf{x}^j)^T\mathbf{s}^j +\alpha^2 (\Delta \mathbf{x}^j)^T\Delta \mathbf{s}^j \geq \varepsilon^*\alpha (\sigma - \gamma_p C_\text{inexact} ) - C_2 \alpha^2. \\       
 \end{align*}
 
 Hence $\alpha^j\ge\alpha^*$, where 
 \begin{equation}\label{eqn:alpha_lower_bound}
\alpha^*:=\min\bigg\{{\edit 1}, \frac{\sigma(1-\underline{\gamma})\varepsilon^* /n}{C_2},\frac{\sigma(\bar{\gamma}-1)\varepsilon^* /n}{C_2}, \frac{(\bar{\sigma}-\sigma)\varepsilon^*}{C_2},\frac{\sigma \varepsilon^*}{C_2}, \frac{(\sigma- \gamma_p C_\text{inexact}) \varepsilon^*}{C_2}\bigg\}.
 \end{equation}
 
The convergence claim follows observing that the inequality

\begin{equation*}
	\varepsilon^* \leq (\mathbf{x}^j)^T\mathbf{s}^j \leq (1 -(1-\bar{\sigma})\alpha^* )^j(\mathbf{x}^0)^T\mathbf{s}^0
\end{equation*}
leads to a contradiction for $j \to \infty$.
\end{proof}

\subsection{Polynomial Complexity}
{\editSte In this section we show that the number of iterations needed to reduce $\mu^j$ below a certain tolerance $\varepsilon$ grows polynomially with the size of the problem.} {\editSte Moreover, it is important to note that, from the definition of the central path $\mathcal{N}_k(\bar {\gamma},\underline{\gamma},\gamma_p,\gamma_d)$, the same number of iterations is sufficient to reduce also the primal and dual infeasibility below the tolerance $\varepsilon$.}

In the following we will omit the index $j$ when this does not lead to ambiguities. It is important to note that the linear system in \eqref{eq:inexact_Newton_System} can be written in an alternative form as follows

\begin{equation*} 
	\underbrace{\begin{bmatrix}
		\rho I & A^T & -I \\
		A    & -\delta I & 0 \\
		S      &  0       & X
	\end{bmatrix}}_{=:J}\begin{bmatrix}
		\Delta \mathbf{x} \\
		-\Delta \mathbf{y} \\
		\Delta \mathbf{s}
	\end{bmatrix} =\begin{bmatrix}
		\boldsymbol{\xi}_d \\
		\boldsymbol{\xi}_p + \boldsymbol{\boldsymbol{\zeta}}\\
		\boldsymbol{\xi}_{\mu,\sigma}
	\end{bmatrix}.
\end{equation*}
Using \cite[Remark 1]{MR3010439}, we have that

\begin{equation} \label{eq:Newton_System_Symmetric_inverse}
	{J}^{-1}=\begin{bmatrix}
		H^{-1}                  & \frac{1}{\delta}H^{-1}A^T                        & H^{-1}X^{-1} \\
		\frac{1}{\delta}AH^{-1} & \frac{1}{\delta^2}AH^{-1}A^T- \frac{1}{\delta} I & \frac{1}{\delta}AH^{-1}X^{-1} \\
		- \Theta^{-1} H ^{-1}      & -\frac{1}{\delta}\Theta^{-1}H^{-1}A^T                 & (I-\Theta^{-1}H^{-1})X^{-1}
	\end{bmatrix},
\end{equation}
where $H:= \rho I + \Theta^{-1} + \frac{1}{\delta}A^TA$. 

{\edit To prove polynomial complexity, we start by bounding the terms that appear in the expression \eqref{eq:Newton_System_Symmetric_inverse}. The next two technical results are useful in this sense.}

\begin{lemma}
We have that	
	\begin{equation*}
		\| H^{-1}\| \in O(1).
	\end{equation*}
\end{lemma}

\begin{proof}
	Using the Sherman-Morrison-Woodbury formula, we get
	\begin{equation} \label{eq:H_inverse}
	 H^{-1}= (\rho I + \Theta ^{-1})^{-1}-\frac{1}{\delta}(\rho I + \Theta ^{-1})^{-1}A^T\big(I + \frac{1}{\delta}A(\rho I + \Theta ^{-1})^{-1}A^T\big)^{-1}A(\rho I + \Theta ^{-1})^{-1}.
	\end{equation}
We observe
\begin{equation} \label{eq:unifrom_boundedness_regularized_diag}
	(\rho I + \Theta ^{-1})^{-1}_{ii}=\frac{\Theta_{ii}}{\rho \Theta_{ii} + 1}= \frac{\rho \Theta_{ii}}{\rho \Theta_{ii} + 1} \frac{1}{\rho}< \frac{1}{\rho}
\end{equation}
and hence
\begin{equation*}
	\begin{split}
		 \|H^{-1}\| \;\leq\; & \| (\rho I + \Theta ^{-1})^{-1} \|\cdot\\
		 &\cdot\Big(1 + \frac{1}{\delta}\|A^T\|\|A\| \|\big(I + \frac{1}{\delta}A(\rho I + \Theta ^{-1})^{-1}A^T)^{-1}\| \| (\rho I + \Theta ^{-1}\big)^{-1} \|\Big) \\
		\leq\; & \frac{1}{\rho}(1+ \frac{1}{\delta \rho}\|A^T\|\|A\|),
	\end{split}
\end{equation*}
where we used that $\|\big(I + \frac{1}{\delta}A(\rho I + \Theta ^{-1})^{-1}A^T\big)^{-1}\|  \leq 1$.
\end{proof}

\begin{corollary}
	We have that
	\begin{equation*}
		\|\Theta^{-1} H^{-1}\| \in O(1).
	\end{equation*}
	
\end{corollary}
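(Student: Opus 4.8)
The plan is to avoid the naive estimate $\|\Theta^{-1}H^{-1}\| \le \|\Theta^{-1}\|\,\|H^{-1}\|$, which is useless here. As the interior point iterates approach the boundary, some components $x_i \to 0$ while the corresponding $s_i$ stay bounded away from zero, so the diagonal entries $(\Theta^{-1})_{ii} = s_i/x_i$ diverge and $\|\Theta^{-1}\|$ is \emph{not} uniformly bounded along the iterations. Hence the whole point of the statement is that, although $\Theta^{-1}$ blows up, the product $\Theta^{-1}H^{-1}$ does not: the unbounded part must be cancelled by $H^{-1}$.

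The key step is a purely algebraic rewriting. From the definition $H = \rho I + \Theta^{-1} + \frac{1}{\delta}A^TA$ one has $\Theta^{-1} = H - \big(\rho I + \frac{1}{\delta}A^TA\big)$, and therefore
\[
	\Theta^{-1}H^{-1} = I - \Big(\rho I + \tfrac{1}{\delta}A^TA\Big)H^{-1}.
\]
This trades the unbounded factor $\Theta^{-1}$ for the fixed matrix $\rho I + \frac{1}{\delta}A^TA$, whose norm is at most the constant $\rho + \frac{1}{\delta}\|A\|^2$ (independent of the iteration, since $\|A^TA\|=\|A\|^2$), while the only iteration-dependent object left is $H^{-1}$.

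Finally I would take norms and combine submultiplicativity with the triangle inequality to obtain
\[
	\|\Theta^{-1}H^{-1}\| \le 1 + \Big(\rho + \tfrac{1}{\delta}\|A\|^2\Big)\|H^{-1}\|.
\]
The preceding Lemma gives $\|H^{-1}\|\in O(1)$, so the right-hand side is bounded by a constant depending only on $\rho$, $\delta$ and $\|A\|$, which proves $\|\Theta^{-1}H^{-1}\|\in O(1)$. The main (and essentially the only) obstacle is conceptual rather than computational: recognising that the required uniform bound has to come from the cancellation exhibited by the identity above, rather than from bounding the two factors $\Theta^{-1}$ and $H^{-1}$ separately. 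Once that identity is in place, the estimate is immediate and relies only on the already established bound on $\|H^{-1}\|$.
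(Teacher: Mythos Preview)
Your proof is correct and takes a genuinely different route from the paper. The paper works with the explicit Sherman--Morrison--Woodbury expression for $H^{-1}$ already derived in the preceding Lemma, and observes that the diagonal matrix $\Theta^{-1}(\rho I + \Theta^{-1})^{-1}$ has entries $\frac{1}{\rho\Theta_{ii}+1}\le 1$; multiplying $\Theta^{-1}$ into the SMW formula then yields the bound $\|\Theta^{-1}H^{-1}\|\le 1+\frac{1}{\delta\rho}\|A^T\|\|A\|$ directly. Your approach instead exploits the algebraic identity $\Theta^{-1}H^{-1}=I-(\rho I+\tfrac{1}{\delta}A^TA)H^{-1}$, which sidesteps the SMW formula entirely and reduces the corollary to a one-line application of the previous Lemma's conclusion $\|H^{-1}\|\in O(1)$. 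Your argument is shorter and more conceptual (the cancellation is made explicit), at the cost of a slightly larger constant once one substitutes the paper's bound for $\|H^{-1}\|$; the paper's argument yields a tighter explicit constant but requires carrying the SMW structure through. Both are perfectly valid for an $O(1)$ statement.
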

\begin{proof}
	Using \eqref{eq:unifrom_boundedness_regularized_diag}, we observe that 
	$$\big(\Theta^{-1} (\rho I + \Theta ^{-1})^{-1}\big)_{ii}= \frac{1}{\rho \Theta_{ii}+1} $$
	and hence, using \eqref{eq:H_inverse}, we have
	\begin{equation*}
		\begin{split}
				\|\Theta^{-1} H^{-1}\| \leq \;& \|\Theta^{-1} (\rho I + \Theta ^{-1})^{-1}\| \;\cdot\\ 
				  &\Big(1+ \frac{1}{\delta}\|A^T\|\|A\| \|\big(1 + \frac{1}{\delta}A(\rho I + \Theta ^{-1})^{-1}A^T)^{-1}\| \| (\rho I + \Theta ^{-1}\big)^{-1} \|\Big) \\
				\leq \;& 1+ \frac{1}{\delta \rho}\|A^T\|\|A\|,  \\
		\end{split}
	\end{equation*}
where we used \eqref{eq:unifrom_boundedness_regularized_diag}.	
\end{proof}

{\edit
Therefore, we know that
\[ \|H^{-1}\| \leq C_3 \hbox{ and } \|\Theta^{-1} H^{-1}\| \leq C_4,
\]
for some positive constants $C_3$ and $C_4$. Let us define $C_5 := \max \{C_3, C_4\}$.
}

{\edit Now that we have bounded the terms in \eqref{eq:Newton_System_Symmetric_inverse}, we look for an upper bound on the norms of the Newton directions that depend polynomially on the size of the problem $n$. This is a crucial step to find a polynomial lower bound on the minimum stepsize \eqref{eqn:alpha_lower_bound} which leads to the polynomial complexity result mentioned at the beginning of this section.}

\begin{theorem}\label{thm:poly_Newton}
	There exists a 
{\edit positive constant $C_6$}	
	s.t. $$\|  [\Delta \mathbf{x}^j,\; \Delta \mathbf{y}^j, \; \Delta \mathbf{s}^j ]^T\|\leq C_6 n\sqrt{\mu^j} \hbox{ for all } j \in \mathbb{N}.$$ 
\end{theorem}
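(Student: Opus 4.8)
The plan is to read the (inexact) Newton direction directly off the closed form \eqref{eq:Newton_System_Symmetric_inverse}, writing
\[
\begin{bmatrix}\Delta\mathbf{x}^j\\ -\Delta\mathbf{y}^j\\ \Delta\mathbf{s}^j\end{bmatrix}
= J^{-1}\begin{bmatrix}\boldsymbol{\xi}_d\\ \boldsymbol{\xi}_p+\boldsymbol{\zeta}\\ \boldsymbol{\xi}_{\mu,\sigma}\end{bmatrix},
\]
and to estimate the contribution of each of the three right-hand side columns separately, using that every block of $J^{-1}$ has operator norm controlled by $C_5$, $\|A\|$, $\rho$ and $\delta$ thanks to the two preceding bounds. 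First I would dispose of the $\boldsymbol{\xi}_d$ and $\boldsymbol{\xi}_p+\boldsymbol{\zeta}$ columns. Since $(\mathbf{x}^j,\mathbf{y}^j,\mathbf{s}^j)\in\mathcal N_k(\bar\gamma,\underline\gamma,\gamma_p,\gamma_d)$ and $(\mathbf{x}^j)^T\mathbf{s}^j=n\mu^j$, the last two inequalities in \eqref{eqn:neighbourhood} give $\|\boldsymbol{\xi}_p\|\le n\mu^j/\gamma_p$ and $\|\boldsymbol{\xi}_d\|\le n\mu^j/\gamma_d$, while Assumption \ref{ass:residual} gives $\|\boldsymbol{\zeta}\|\le C_\text{inexact}\, n\mu^j$. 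Premultiplying by the corresponding uniformly bounded blocks of $J^{-1}$ shows these two columns contribute a term of size $O(n\mu^j)$ to each of $\Delta\mathbf{x}^j,\Delta\mathbf{y}^j,\Delta\mathbf{s}^j$; as $\mu^j\le\mu^0$ for all $j$, we have $n\mu^j\le\sqrt{\mu^0}\,n\sqrt{\mu^j}$, which is already of the claimed form.

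The delicate column is the complementarity block $\boldsymbol{\xi}_{\mu,\sigma}$, which enters through the factor $X^{-1}$. The idea is to rescale: setting $\mathbf{u}:=(XS)^{-1/2}\boldsymbol{\xi}_{\mu,\sigma}$, the bounds $\underline\gamma\mu^j\le x_i s_i\le\bar\gamma\mu^j$ from \eqref{eqn:neighbourhood} give $|u_i|\le (c_\sigma/\sqrt{\underline\gamma})\sqrt{\mu^j}$ with $c_\sigma:=\max\{|\sigma-\underline\gamma|,|\sigma-\bar\gamma|\}$, hence $\|\mathbf{u}\|\le c_1\sqrt{n\mu^j}$. Writing $\Theta^{-1/2}=S^{1/2}X^{-1/2}$ one verifies the identity $X^{-1}\boldsymbol{\xi}_{\mu,\sigma}=\Theta^{-1/2}\mathbf{u}$, so that every occurrence of $X^{-1}\boldsymbol{\xi}_{\mu,\sigma}$ in \eqref{eq:Newton_System_Symmetric_inverse} is in fact $H^{-1}\Theta^{-1/2}\mathbf{u}$, up to the already-controlled left factors $A$, $\tfrac1\delta$, $\Theta^{-1}H^{-1}$. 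It therefore suffices to show $\|H^{-1}\Theta^{-1/2}\|=O(1)$. This I would obtain from the operator inequality $H\succeq\Theta^{-1}$ (immediate from $H=\rho I+\Theta^{-1}+\tfrac1\delta A^TA$), which gives $H^{-1}\preceq\Theta$; combined with $H^{-1}\preceq C_3 I$ this yields $H^{-2}\preceq C_3 H^{-1}\preceq C_3\Theta$, so $\Theta^{-1/2}H^{-2}\Theta^{-1/2}\preceq C_3 I$, i.e.\ $\|H^{-1}\Theta^{-1/2}\|\le\sqrt{C_3}$. Consequently the $\boldsymbol{\xi}_{\mu,\sigma}$ column contributes $O(\sqrt{n\mu^j})\le O(n\sqrt{\mu^j})$ to the $\Delta\mathbf{x}^j$ and $\Delta\mathbf{y}^j$ blocks.

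The main obstacle is the $\Delta\mathbf{s}^j$ block, where $\boldsymbol{\xi}_{\mu,\sigma}$ is premultiplied by $(I-\Theta^{-1}H^{-1})X^{-1}$: a naive estimate only gives $\|(I-\Theta^{-1}H^{-1})X^{-1}\boldsymbol{\xi}_{\mu,\sigma}\|=O(\|\mathbf{s}\|)=O(1)$, which is too weak and would contradict the asserted $\sqrt{\mu^j}$ decay. The resolution is the algebraic identity $I-\Theta^{-1}H^{-1}=(\rho I+\tfrac1\delta A^TA)H^{-1}$, obtained by writing $I-\Theta^{-1}H^{-1}=(H-\Theta^{-1})H^{-1}$ and inserting the definition of $H$. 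This reveals the damping hidden in the $\Delta\mathbf{s}$ row: the offending term equals $(\rho I+\tfrac1\delta A^TA)\,H^{-1}\Theta^{-1/2}\mathbf{u}$, whose norm is at most $(\rho+\tfrac1\delta\|A\|^2)\sqrt{C_3}\,\|\mathbf{u}\|=O(\sqrt{n\mu^j})$ by the previous paragraph. Collecting the three blocks, every component of the Newton direction is bounded by a constant multiple of $n\mu^j+\sqrt{n\mu^j}$; using $\mu^j\le\mu^0$ and $\sqrt n\le n$ this is at most $C_6\, n\sqrt{\mu^j}$ for a suitable $C_6$ depending only on $\rho,\delta,\|A\|,\underline\gamma,\bar\gamma,\gamma_p,\gamma_d,\sigma,C_\text{inexact},C_5$ and $\mu^0$, which establishes the claim.
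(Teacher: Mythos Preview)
Your proof is correct and follows essentially the same route as the paper's: read off the direction from the explicit inverse \eqref{eq:Newton_System_Symmetric_inverse}, bound the $\boldsymbol{\xi}_d$ and $\boldsymbol{\xi}_p+\boldsymbol{\zeta}$ contributions by $O(n\mu^j)$ via the neighbourhood, and handle the dangerous $X^{-1}\boldsymbol{\xi}_{\mu,\sigma}$ term by the rescaling $X^{-1}\boldsymbol{\xi}_{\mu,\sigma}=\Theta^{-1/2}(XS)^{-1/2}\boldsymbol{\xi}_{\mu,\sigma}$ together with the operator inequality $H\succeq\Theta^{-1}$. The paper writes this last step as the factorisation $H^{-1}X^{-1}\boldsymbol{\xi}_{\mu,\sigma}=H^{-1/2}\cdot H^{-1/2}\Theta^{-1/2}\cdot\big((XS)^{1/2}\mathbf e-\sigma\mu(XS)^{-1/2}\mathbf e\big)$ and bounds the factors separately, which is exactly your argument in slightly different clothing.

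The one place you diverge is the $\Delta\mathbf{s}^j$ block. You stay with the third row of $J^{-1}$ and rescue the estimate through the identity $I-\Theta^{-1}H^{-1}=(\rho I+\tfrac1\delta A^TA)H^{-1}$, which is correct and neat. The paper instead sidesteps this entirely by reading $\Delta\mathbf{s}^j$ from the \emph{first} Newton equation, $\Delta\mathbf{s}^j=\rho\Delta\mathbf{x}^j-A^T\Delta\mathbf{y}^j-\boldsymbol{\xi}_d^j$, and recycling the already-established bounds on $\Delta\mathbf{x}^j$, $\Delta\mathbf{y}^j$, $\boldsymbol{\xi}_d^j$. Both yield the same $O(n\sqrt{\mu^j})$ bound; the paper's shortcut is a little more economical, while your identity makes the structural reason for the damping in the $\Delta\mathbf{s}$ row explicit.
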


\begin{proof}
	Using \eqref{eq:Newton_System_Symmetric_inverse}, we have
	\begin{equation*} \label{eq:explicit_deltas}
		\begin{split}
			&\Delta \mathbf{x}^j = {(H^j)^{-1}\boldsymbol{\xi}^j_d +\frac{1}{\delta}(H^j)^{-1}A^T(\boldsymbol{\xi}^j_p+\boldsymbol{\boldsymbol{\zeta}}^j)}+{(H^j)^{-1} (X^j)^{-1} \boldsymbol{\xi}^j_{\sigma, \mu^j}}, \\
			&-\Delta \mathbf{y}^j =  {\frac{1}{\delta}A(H^j)^{-1}\boldsymbol{\xi}^j_d +\Big(\frac{1}{\delta^2}A(H^j)^{-1}A^T-\frac{1}{\delta}I\Big)(\boldsymbol{\xi}^j_p+\boldsymbol{\boldsymbol{\zeta}}^j)} +  { \frac{1}{\delta}A(H^j)^{-1} (X^j)^{-1} \boldsymbol{\xi}^j_{\sigma, \mu^j}},\\
			\end{split}
	\end{equation*}
and hence
\begin{equation}\label{eq:Delta_x_bound}
\small
	\begin{split}
		\|\Delta \mathbf{x}^j\| \leq\; & \|(H^j)^{-1}\|\|\boldsymbol{\xi}^j_d\| +\frac{1}{\delta}\|(H^j)^{-1}\|\|A^T\|\|\boldsymbol{\xi}^j_p+\boldsymbol{\zeta}^j\|\\
		 +& \|(H^j)^{-1/2} \|\|(H^j)^{-1/2}(X^j)^{-1/2}(S^j)^{1/2} \| \|(X^j)^{1/2}(S^j)^{1/2}\mathbf{e} - \sigma \mu^j  (X^j)^{-1/2}(S^j)^{-1/2}\mathbf{e} \| \\
		 \leq\; & \frac{C_5 n \mu^j}{\gamma_d} + \frac{C_5 \|A^T\| (1+\gamma_p C_\text{inexact}) n\mu^j}{\delta \gamma_p}+ C_5 \Big(\sqrt{\bar{\gamma}n }+\sigma\frac{\sqrt{n}}{\sqrt{\underline{\gamma}}}\Big) \sqrt{\mu^j} \\
		 \leq & {\edit \bigg(\frac{C_5 \sqrt{\mu^0}}{\gamma_d} + \frac{C_5 \|A^T\| (1+\gamma_p C_\text{inexact})\sqrt{\mu^0}}{\delta \gamma_p}+ C_5 \Big(\sqrt{\bar{\gamma} }+\sigma\frac{1}{\sqrt{\underline{\gamma}}}\Big) \bigg) n\sqrt{\mu^j}   } \\
		 \leq\; & {\edit C_{\Delta x}n \sqrt{\mu^j} ,}
	\end{split}
\end{equation}
where we used
\begin{equation*}
	\begin{split}
		&\|(X^j)^{1/2}(S^j)^{1/2}\mathbf{e} - \sigma \mu^j  (X^j)^{-1/2}(S^j)^{-1/2}\mathbf{e} \| \\ 
		\leq\; & \|(X^j)^{1/2}(S^j)^{1/2}\mathbf{e}\|+ \|\sigma \mu^j  (X^j)^{-1/2}(S^j)^{-1/2}\mathbf{e} \|, \\
	\end{split}
\end{equation*}
{\edit $\mu^j=\sqrt{\mu^j}\,\sqrt{\mu^j}\le\sqrt{\mu^0}\,\sqrt{\mu^j}$ and $\sqrt{n}\le n$,}
and where $C_{\Delta x}$ is a positive constant.

Analogously
\begin{equation}\label{eq:Delta_y_bound}
	\begin{split}
		\|\Delta \mathbf{y}^j\| \leq\; & \frac{1}{\delta} \|A\|\|(H^j)^{-1}\|\|\boldsymbol{\xi}^j_d\| + \Big(\frac{1}{\delta^2}  \|(H^j)^{-1}\|\|A\| \|A^T\|+ \frac{1}{\delta} \Big) \| \boldsymbol{\xi}^j_p+\boldsymbol{\zeta}^j\|\\		
		+& \frac{1}{\delta}\|A\|\|(H^j)^{-1/2} \|\|(H^j)^{-1/2}(X^j)^{-1/2}(S^j)^{1/2} \|\cdot\\
		&\cdot\|(X^j)^{1/2}(S^j)^{1/2}\mathbf{e} - \sigma \mu^j  (X^j)^{-1/2}(S^j)^{-1/2}\mathbf{e} \| \\	
		\leq\; & \frac{\|A\|C_5 n \mu^j}{\delta\gamma_d}+ \frac{(C_5 \|A^T\|\|A\|+\delta )(1+\gamma_p C_\text{inexact})n \mu^j}{\delta^2 \gamma_p}+\\
		&+ \frac{C_5\|A\|}{\delta} \Big(\sqrt{\bar{\gamma}n }+\sigma\frac{\sqrt{n}}{\sqrt{\underline{\gamma}}}\Big) \sqrt{\mu^j} \\
		\leq\; & { \edit \bigg(\frac{\|A\|C_5 \sqrt{\mu^0}}{\delta\gamma_d}+ \frac{(C_5 \|A^T\|\|A\|+\delta )(1+\gamma_p C_\text{inexact}) \sqrt{\mu^0}}{\delta^2 \gamma_p}+}\\
		&{ \edit + \frac{C_5\|A\|}{\delta} \Big(\sqrt{\bar{\gamma} }+\sigma\frac{1}{\sqrt{\underline{\gamma}}}\Big) \bigg)n \sqrt{\mu^j}  }\\
		\leq\; & {\edit C_{\Delta y}n \sqrt{\mu^j} ,}
	\end{split}
\end{equation}	
where $C_{\Delta y}$ is a positive constant.
Finally, using the fact that $\Delta \mathbf{s}^j= \rho \Delta \mathbf{x}^j-A^T \Delta \mathbf{y}^j -\boldsymbol{\xi}_d^j $ and using \eqref{eq:Delta_x_bound}, \eqref{eq:Delta_y_bound}, and the definition of $\mathcal{N}_k(\bar {\gamma},\underline{\gamma},\gamma_p,\gamma_d)$, we have

\begin{equation*}\label{eq:Delta_s_bound}
	\begin{split}
		\| \Delta \mathbf{s}^j\|= & \rho \|\Delta \mathbf{x}^j\|+\|A^T\| \|\Delta \mathbf{y}^j\| +\|\boldsymbol{\xi}_d^j\| \\
		    \leq\; & {\edit \rho C_{\Delta x}n \sqrt{\mu^j} +\|A^T\| C_{\Delta y}n \sqrt{\mu^j}+\frac{n \mu^j}{\gamma_d}} \\
		    \leq\; & { \edit \Big(\rho C_{\Delta x} +\|A^T\| C_{\Delta y} +\frac{ \sqrt{\mu^0}}{\gamma_d}\Big)n\sqrt{\mu^j}}  \\
		    \leq\; & {\edit C_{\Delta s}n\sqrt{\mu^j},}
	\end{split}
\end{equation*}
where $C_{\Delta s}$ is a positive constant.

{\edit The thesis follows setting $C_6 = \max(C_{\Delta x},C_{\Delta y},C_{\Delta s})$.}
\end{proof}

{\edit The next straightforward technical result specializes the polynomial bound of Theorem \ref{thm:poly_Newton} to the terms that appear in equations \eqref{eq:ineq_1}-\eqref{eq:ineq_5}.}

\begin{corollary}
There exists a {\edit positive constant $C_7$} such that, for all $i$
\begin{equation} \label{eq:uniform_bound_deltas_polynomial}
	\begin{split}
		& \Big|\Delta x_i \Delta s_i - \underline{\gamma}\frac{(\Delta \mathbf{x})^T \Delta \mathbf{s}}{n}\Big| \leq C_7 n^2 \mu, \\
		& \Big|\bar{\gamma}\frac{(\Delta \mathbf{x})^T \Delta \mathbf{s}}{n}-\Delta x_i \Delta s_i \Big| \leq C_7 n^2 \mu, \\
		& |(\Delta \mathbf{x})^T\Delta \mathbf{s}| \leq C_7 n^2 \mu.
	\end{split}
\end{equation} 
\end{corollary}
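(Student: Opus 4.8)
The plan is to apply the polynomial bound of Theorem \ref{thm:poly_Newton} directly to each of the three expressions, exploiting the fact that all three are built from the products $\Delta x_i \Delta s_i$ and the inner product $(\Delta \mathbf{x})^T \Delta \mathbf{s}$. The key observation is that from Theorem \ref{thm:poly_Newton} we have $\|\Delta \mathbf{x}\| \leq C_6 n \sqrt{\mu}$ and $\|\Delta \mathbf{s}\| \leq C_6 n \sqrt{\mu}$, since these norms are each bounded by the norm of the full Newton direction $\|[\Delta \mathbf{x},\Delta \mathbf{y},\Delta \mathbf{s}]^T\|$.

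First I would handle the third (simplest) bound. By Cauchy--Schwarz,
\begin{equation*}
	|(\Delta \mathbf{x})^T\Delta \mathbf{s}| \leq \|\Delta \mathbf{x}\|\,\|\Delta \mathbf{s}\| \leq C_6^2 n^2 \mu,
\end{equation*}
which already has the required form with a constant proportional to $C_6^2$.

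Next I would bound the individual products $|\Delta x_i \Delta s_i|$. Since $|\Delta x_i| \leq \|\Delta \mathbf{x}\| \leq C_6 n\sqrt{\mu}$ and likewise $|\Delta s_i| \leq C_6 n \sqrt{\mu}$, we get $|\Delta x_i \Delta s_i| \leq C_6^2 n^2 \mu$ for each $i$. Then the first two quantities follow from the triangle inequality: for the first,
\begin{equation*}
	\Big|\Delta x_i \Delta s_i - \underline{\gamma}\frac{(\Delta \mathbf{x})^T \Delta \mathbf{s}}{n}\Big| \leq |\Delta x_i \Delta s_i| + \frac{\underline{\gamma}}{n}|(\Delta \mathbf{x})^T\Delta \mathbf{s}| \leq C_6^2 n^2 \mu + \underline{\gamma}\, C_6^2 n \mu \leq (1+\underline{\gamma})C_6^2 n^2 \mu,
\end{equation*}
using $n \leq n^2$, and analogously for the second with $\bar{\gamma}$ in place of $\underline{\gamma}$. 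Setting $C_7 := (1+\bar{\gamma})C_6^2$ (which dominates all three cases since $\bar{\gamma}>1>\underline{\gamma}$) yields the thesis.

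There is essentially no genuine obstacle here: the result is, as the paper itself states, a straightforward corollary, and the entire content is packaging the norm bound of Theorem \ref{thm:poly_Newton} through elementary inequalities (coordinate projection, Cauchy--Schwarz, triangle inequality). The only point requiring minor care is making sure the factor $n$ in the $(\Delta \mathbf{x})^T \Delta \mathbf{s}/n$ terms is absorbed correctly so that the final bound is uniformly $n^2 \mu$ rather than a mix of $n^2\mu$ and $n\mu$; this is handled by the crude estimate $n \leq n^2$ and the choice of a single dominating constant $C_7$.
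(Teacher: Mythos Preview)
Your proposal is correct and is exactly the kind of argument the paper has in mind: the corollary is stated there without proof, described only as a ``straightforward technical result'' specializing Theorem~\ref{thm:poly_Newton}, and your use of Cauchy--Schwarz, coordinate bounds, and the triangle inequality is the natural way to fill in the details.
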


{\edit We can now apply the previous results and obtain a bound similar to \eqref{eqn:alpha_lower_bound}, but that depends polynomially on the size of the problem $n$. This is the last fundamental step before the polynomial complexity result can be stated.}

\begin{theorem}
{\edit There exists a constant $\tilde \alpha$ s.t. $\alpha^j \geq \tilde\alpha$ for all $j \in \mathbb{N}$  and	
\begin{equation*}
	\tilde\alpha \geq  C_8 n^{-2},
\end{equation*}

where $C_8$ is a positive constant.}
\end{theorem}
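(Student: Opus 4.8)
The plan is to mirror the structure of Claim 2 in the proof of Theorem \ref{th:convergence}, but to replace the crude uniform bound $C_2$ with the sharp polynomial bounds \eqref{eq:uniform_bound_deltas_polynomial}. The essential new ingredient is that, using $(\mathbf{x}^j)^T\mathbf{s}^j = n\mu^j$, the factor $\mu^j$ cancels in every ratio, leaving an explicit dependence on $n$ only.

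First I would revisit the lower bounds \eqref{eq:ineq_1}--\eqref{eq:ineq_5} for $f_i,\bar f_i,h,g_d,g_p$ and substitute the polynomial estimates \eqref{eq:uniform_bound_deltas_polynomial} for the quadratic coefficients. For instance, from \eqref{eq:ineq_1}, the first bound in \eqref{eq:uniform_bound_deltas_polynomial}, and $(\mathbf{x}^j)^T\mathbf{s}^j/n = \mu^j$, I obtain
\[
f_i(\alpha) \geq -C_7 n^2 \mu^j \alpha^2 + \sigma(1-\underline{\gamma})\mu^j \alpha = \mu^j\alpha\big(\sigma(1-\underline{\gamma}) - C_7 n^2 \alpha\big),
\]
so $f_i(\alpha)\geq 0$ whenever $\alpha \leq \sigma(1-\underline{\gamma})/(C_7 n^2)$. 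Treating $\bar f_i$ analogously via \eqref{eq:ineq_2} yields $\bar f_i(\alpha)\geq 0$ for $\alpha \leq \sigma(\bar{\gamma}-1)/(C_7 n^2)$.

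Second, I would repeat this substitution for $h,g_d,g_p$ using \eqref{eq:ineq_3}--\eqref{eq:ineq_5} and the third estimate in \eqref{eq:uniform_bound_deltas_polynomial}. The decisive observation is that in these three functions the linear term carries a factor $(\mathbf{x}^j)^T\mathbf{s}^j = n\mu^j$ (rather than $\mu^j$), so the resulting thresholds scale like $n^{-1}$: explicitly $h(\alpha)\geq 0$ for $\alpha\leq (\bar{\sigma}-\sigma)/(C_7 n)$, $g_d(\alpha)\geq 0$ for $\alpha \leq \sigma/(C_7 n)$, and $g_p(\alpha)\geq 0$ for $\alpha \leq (\sigma-\gamma_p C_\text{inexact})/(C_7 n)$, where positivity of $\sigma-\gamma_p C_\text{inexact}$ is guaranteed by the input requirement $\gamma_p C_\text{inexact}<\sigma$ of Algorithm \ref{alg:IPM}. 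Defining $\tilde\alpha$ as the minimum of these five thresholds together with $1$, the maximal stepsize selected at Line \ref{algline:step_lenght} satisfies $\alpha^j\geq\tilde\alpha$, and $\tilde\alpha\geq C_8 n^{-2}$ follows because $n^{-2}\leq n^{-1}$ for $n\geq 1$, so the two $n^{-2}$ thresholds from $f_i,\bar f_i$ are binding and one may take $C_8 := \min\{\sigma(1-\underline{\gamma}),\sigma(\bar{\gamma}-1)\}/C_7$.

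I anticipate that the only delicate point is the bookkeeping of the powers of $n$: one must carefully distinguish the linear terms scaling as $\mu^j$ (in $f_i,\bar f_i$) from those scaling as $n\mu^j$ (in $h,g_d,g_p$), since precisely this discrepancy forces the $n^{-2}$ rate rather than $n^{-1}$. One must also ensure, as in Theorem \ref{th:IPM_well_definitness}, that over $[0,\tilde\alpha]$ the iterates remain strictly positive, i.e.\ $\alpha^{*,j}>\tilde\alpha$; this follows from the identical contradiction argument, since $\tilde\alpha$ enforces $f_{\bar\ell}(\alpha)\geq 0$ while the stopping test at Line \ref{eq:alg_stopping_condition} has not triggered. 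No genuinely new estimate is required beyond \eqref{eq:uniform_bound_deltas_polynomial}.
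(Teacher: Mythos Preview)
Your proposal is correct and follows essentially the same approach as the paper: substitute the polynomial bounds \eqref{eq:uniform_bound_deltas_polynomial} into the inequalities \eqref{eq:ineq_1}--\eqref{eq:ineq_5}, extract the five $\alpha$-thresholds, and take their minimum together with $1$. The only minor slip is in your explicit choice of $C_8$: to guarantee $\tilde\alpha\geq C_8 n^{-2}$ for \emph{every} $n\geq 1$ you must also include the three constants $(\bar\sigma-\sigma)$, $\sigma$, and $(\sigma-\gamma_p C_{\text{inexact}})$ in the minimum (since for small $n$ an $n^{-1}$ threshold can be the binding one), but the paper itself does not spell out $C_8$ explicitly, so this does not affect the validity of the argument.
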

\begin{proof}
	 Using \eqref{eq:uniform_bound_deltas_polynomial} in equations \eqref{eq:ineq_1}, \eqref{eq:ineq_2}, \eqref{eq:ineq_3}, \eqref{eq:ineq_4}, \eqref{eq:ineq_5} we have:
	\begin{align*}
		f_i(\alpha) \geq &\; \alpha^2\Big(\Delta x_i \Delta s_i - \underline{\gamma}\frac{(\Delta \mathbf{x})^T \Delta \mathbf{s}}{n}\Big) + \alpha \sigma (1-\underline{\gamma})\frac{\mathbf{x}^T\mathbf{s}}{n} \\
		\geq &\; - C_7n^2\mu \alpha^2 +\alpha \sigma (1-\underline{\gamma}) \mu,\\ 
		\bar{f}_i(\alpha) \geq &\; \alpha^2\Big(\bar{\gamma}\frac{(\Delta \mathbf{x})^T \Delta \mathbf{s}}{n}-\Delta x_i \Delta s_i \Big) + \alpha \sigma (\bar{\gamma}-1)\frac{\mathbf{x}^T\mathbf{s}}{n} \\
		\geq &\; - C_7 n^2 \mu \alpha^2 +\alpha \sigma (\bar{\gamma}-1) \mu,\\ 
		h(\alpha)= &\;\mathbf{x}^T\mathbf{s}(\bar{\sigma}-\sigma) \alpha- \alpha^2 (\Delta \mathbf{x})^T\Delta \mathbf{s}\geq n \mu (\bar{\sigma}-\sigma) \alpha - C_7 n^2 \mu  \alpha^2,\\
		g_d(\alpha) \geq  &\;  \alpha \sigma \mathbf{x}^T\mathbf{s} +\alpha^2 (\Delta \mathbf{x})^T\Delta \mathbf{s} \geq n \mu \sigma \alpha - C_7n^2 \mu \alpha^2,  \\    
		g_p(\alpha) \geq  &\;   \alpha (\sigma - \gamma_p C_\text{inexact}) \mathbf{x}^T\mathbf{s} +\alpha^2 (\Delta \mathbf{x})^T\Delta \mathbf{s} \geq n \mu \alpha (\sigma - \gamma_p C_\text{inexact} ) - C_7n^2 \mu \alpha^2. \\       
	\end{align*}
Hence, {\editSte defining}
	\begin{equation*}
		{\editSte \tilde\alpha :=}  \min\bigg\{{\edit 1}, \frac{\sigma(1-\underline{\gamma})}{C_7n^2},\;\frac{\sigma(\bar{\gamma}-1)}{C_7n^2},\; \frac{(\bar{\sigma}-\sigma)}{C_7n},\;\frac{\sigma}{C_7n},\; \frac{(\sigma- \gamma_p C_\text{inexact})}{C_7n}\bigg\},
	\end{equation*}
	{\editSte the thesis follows observing that, by definition, $\alpha^j \geq \tilde\alpha$.}
	
\end{proof}

{\edit Finally, we are ready to show that the number of iterations required to reduce $\mu$ below a certain tolerance $\varepsilon$ is proportional to $n^2$.}

\begin{theorem}
	Algorithm \ref{alg:IPM} has polynomial complexity, i.e. given $\varepsilon>0$ there exists $K \in O(n^2\ln (\frac{1}{\varepsilon}) )$ s.t. $\mu^ j \leq \varepsilon$ for all $j \geq K$.
\end{theorem}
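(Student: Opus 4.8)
The plan is to convert the per-iteration geometric reduction of the complementarity product, guaranteed by the second inequality in the step-length condition \eqref{eq:IPM_next_Step}, into an explicit iteration count by inserting the polynomial lower bound $\alpha^j \geq \tilde\alpha \geq C_8 n^{-2}$ established in the previous theorem. Concretely, I would first recall that $\mu^j = (\mathbf{x}^j)^T\mathbf{s}^j/n$ by construction, and that evaluating \eqref{eq:IPM_next_Step} at the accepted stepsize $\alpha^j$ (so that $(\mathbf{x}^{j+1},\mathbf{y}^{j+1},\mathbf{s}^{j+1})=(\mathbf{x}^j(\alpha^j),\mathbf{y}^j(\alpha^j),\mathbf{s}^j(\alpha^j))$) gives $(\mathbf{x}^{j+1})^T\mathbf{s}^{j+1} \leq (1-(1-\bar{\sigma})\alpha^j)(\mathbf{x}^j)^T\mathbf{s}^j$. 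Dividing by $n$ and using $\alpha^j \geq \tilde\alpha \geq C_8 n^{-2}$ together with $1-\bar{\sigma}>0$ yields the one-step contraction
\begin{equation*}
\mu^{j+1} \leq (1-(1-\bar{\sigma})\tilde\alpha)\mu^j \leq (1-(1-\bar{\sigma})C_8 n^{-2})\mu^j.
\end{equation*}

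Next I would set $\beta := (1-\bar{\sigma})C_8 n^{-2}$ and observe that $\beta \in (0,1)$: indeed $\tilde\alpha \leq 1$ (the minimum defining $\tilde\alpha$ includes the value $1$) forces $C_8 n^{-2}\leq 1$, hence $\beta \leq (1-\bar{\sigma})<1$. A trivial induction then gives $\mu^j \leq (1-\beta)^j\mu^0$. To obtain the claimed $K$, I would impose $(1-\beta)^j\mu^0 \leq \varepsilon$, take logarithms, and exploit the elementary inequality $-\ln(1-\beta)\geq\beta$ valid for $\beta\in(0,1)$, so that
\begin{equation*}
j \geq \frac{\ln(\mu^0/\varepsilon)}{-\ln(1-\beta)} \quad\text{is implied by}\quad j \geq \frac{\ln(\mu^0/\varepsilon)}{\beta}=\frac{n^2}{(1-\bar{\sigma})C_8}\ln\!\Big(\frac{\mu^0}{\varepsilon}\Big).
\end{equation*}
Choosing $K := \lceil \frac{n^2}{(1-\bar{\sigma})C_8}\ln(\mu^0/\varepsilon)\rceil$ therefore forces $\mu^j\leq\varepsilon$ for all $j\geq K$, and since $\mu^0$ does not depend on $\varepsilon$ we have $\ln(\mu^0/\varepsilon)\in O(\ln(1/\varepsilon))$, giving $K\in O(n^2\ln(1/\varepsilon))$ as required.

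The argument is genuinely routine: the substantive work has already been absorbed into the preceding results, namely the direction bound $\|[\Delta\mathbf{x}^j,\Delta\mathbf{y}^j,\Delta\mathbf{s}^j]^T\|\leq C_6 n\sqrt{\mu^j}$ of Theorem~\ref{thm:poly_Newton} and the resulting polynomial stepsize bound $\tilde\alpha \geq C_8 n^{-2}$. The only point deserving care — and the closest thing to an obstacle — is the bookkeeping that keeps the complexity uniform: verifying $\beta\in(0,1)$ so that $(1-\beta)^j$ is a genuine contraction (handled above via $\tilde\alpha\le 1$), and treating the fixed-$k$ quantities $\mu^0$ and $C_8$ as $n$-dependent constants rather than hidden functions of $\varepsilon$, which is legitimate under the fixed-PPM-iterate convention adopted throughout this section.
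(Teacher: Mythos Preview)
Your proposal is correct and follows essentially the same approach as the paper: the paper's proof consists of the single displayed inequality $(\mathbf{x}^j)^T\mathbf{s}^j \leq (1-(1-\bar{\sigma})\tilde\alpha)^j(\mathbf{x}^0)^T\mathbf{s}^0 \leq (1-(1-\bar{\sigma})C_8 n^{-2})^j(\mathbf{x}^0)^T\mathbf{s}^0$, leaving the logarithm step implicit, whereas you spell out the standard details (verifying $\beta\in(0,1)$, taking logs, and invoking $-\ln(1-\beta)\geq\beta$).
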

\begin{proof}
	Thesis follows observing that
	\begin{equation*}
		(\mathbf{x}^j)^T\mathbf{s}^j \leq \big(1 -(1-\bar{\sigma})\tilde\alpha \big)^j(\mathbf{x}^0)^T\mathbf{s}^0 \leq \Big(1 -(1-\bar{\sigma}) \frac{C_8}{n^2} \Big)^j(\mathbf{x}^0)^T\mathbf{s}^0.
	\end{equation*}
\end{proof}

\section{Properties of the regularized normal equations system} \label{sec;properties_regularized_normal}
In this section we show some properties of matrix $S_{\rho,\delta}$ that are useful for the analysis performed in the next section.

{\edit
For the original graph $G(V,E)$, we define the {\it adjacency matrix} $\mathcal A\in\mathbb R^{|V|\times|V|}$ such that $\mathcal A_{ij}=1$ if there exists an edge between nodes $i$ and $j$ and $\mathcal A_{ij}=0$ otherwise. Notice that for an undirected graph $\mathcal A$ is symmetric; we assume that there are no self-loops, so that the diagonal of $\mathcal A$ is made of zeros. Let us define the {\it degree matrix} of a graph $\mathcal D\in\mathbb R^{|V|\times|V|}$ such that $\mathcal D$ is diagonal and $\mathcal D_{jj}$ is the degree of node $j$. Notice that $\mathcal D_{jj} = (\mathcal A\mathbf e)_j$.

Let us define also the {\it Laplacian matrix} of a graph as $\mathcal L\in\mathbb R^{|V|\times|V|}$ such that $\mathcal L=\mathcal D-\mathcal A$. An important relation between the Laplacian $\mathcal L$ and the node-arc incidence matrix $A$ is that $\mathcal L = AA^T$.

Given a diagonal matrix $\Theta$ and a parameter $\rho$, we define the re-weighted graph $G_\Theta$ as the graph with the same connectivity of $G$, in which the weight of every edge $j$ is scaled by a factor $\sqrt{\frac{\Theta_{jj}}{1+\rho\Theta_{jj}}}$. The adjacency matrix of the new graph $\mathcal A_\Theta$ has the same sparsity pattern of $\mathcal A$, but takes into account the new weight of the edges. The same happens for the degree matrix $\mathcal D_\Theta$. The incidence matrix therefore becomes $A_\Theta = A(\Theta^{-1}+\rho I)^{-1/2}$.

The new Laplacian matrix thus reads $\mathcal L_\Theta = \mathcal D_\Theta-\mathcal A_\Theta$ and can be written as 
\[
\mathcal L_\Theta = A_\Theta A_\Theta^T = A(\Theta^{-1}+\rho I)^{-1}A^T.
\]

We have just shown that the normal equations matrix can be interpreted as the Laplacian matrix of the original graph, where the edges are weighted according to the diagonal entries of matrix $\Theta$. This result is important because solving linear systems that involve Laplacian matrices is much easier than solving general linear systems. We summarize this result in the following Lemma.}
\begin{lemma} \label{lem:Laplacian_Graph}
	The matrix $A(\Theta^{-1}+\rho I)^{-1}A^T$ is the Laplacian of a weighted undirected graph and hence, for every $\Theta \in \mathbb{R}^{n\times n}_+$
	\begin{equation} \label{eq:Laplacian_IPM}
		A(\Theta^{-1}+\rho I)^{-1}A^T=\mathcal D_\Theta-\mathcal A_{\Theta},
	\end{equation}
	where $\mathcal D_\Theta$ and $\mathcal A_\Theta$ are the degree and adjacency matrices of the weighted graph.
\end{lemma}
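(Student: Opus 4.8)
The plan is to reduce the statement to the classical identity that, for an oriented incidence matrix $A$, the product $A W A^T$ with a positive diagonal edge-weight matrix $W$ equals the weighted graph Laplacian. First I would record that, since $\Theta$ is diagonal with strictly positive entries and $\rho>0$, the matrix $\Theta^{-1}+\rho I$ is diagonal and positive definite, with $(\Theta^{-1}+\rho I)_{jj}=(1+\rho\Theta_{jj})/\Theta_{jj}$; hence $(\Theta^{-1}+\rho I)^{-1}$ is again diagonal with strictly positive entries $w_j:=\Theta_{jj}/(1+\rho\Theta_{jj})$. This guarantees that the diagonal symmetric square root $(\Theta^{-1}+\rho I)^{-1/2}$ exists, so that setting $A_\Theta:=A(\Theta^{-1}+\rho I)^{-1/2}$ yields the factorization $A(\Theta^{-1}+\rho I)^{-1}A^T=A_\Theta A_\Theta^T$, precisely as anticipated in the discussion preceding the statement.

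Next I would identify $A_\Theta$ as the incidence matrix of the re-weighted graph $G_\Theta$: column $j$ of $A$ is scaled by $\sqrt{w_j}=\sqrt{\Theta_{jj}/(1+\rho\Theta_{jj})}$, which is exactly the edge re-weighting used to define $G_\Theta$. The core of the argument is then a direct entry-wise evaluation of $A_\Theta A_\Theta^T$. Exploiting that each column of $A$, indexed by an edge $e=(v,w)$, has precisely two nonzero entries, namely $-1$ in row $v$ and $+1$ in row $w$, I would compute the diagonal entry $(A_\Theta A_\Theta^T)_{ii}=\sum_e w_e (A_{ie})^2=\sum_{e\text{ incident to }i} w_e$, which is the weighted degree of node $i$ and therefore equals $(\mathcal D_\Theta)_{ii}$. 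For an off-diagonal entry with $i\neq j$, only an edge joining $i$ and $j$ can contribute, and for such an edge the two incidence entries carry opposite signs, so $A_{ie}A_{je}=-1$ and $(A_\Theta A_\Theta^T)_{ij}=-w_e=-(\mathcal A_\Theta)_{ij}$. Assembling these two computations gives $A_\Theta A_\Theta^T=\mathcal D_\Theta-\mathcal A_\Theta$, which is \eqref{eq:Laplacian_IPM}.

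Finally, to support the claim that this is the Laplacian of a genuine weighted undirected graph, I would observe that all weights satisfy $w_e>0$, that the assumed absence of self-loops keeps each column of $A$ supported on two distinct rows, and that $\mathcal A_\Theta$ is symmetric with zero diagonal, exactly the defining features of a weighted adjacency matrix. I do not anticipate a substantive obstacle: the only points requiring care are the sign bookkeeping in the off-diagonal term (so that the two endpoints of each oriented edge contribute with opposite signs and the adjacency part enters with a minus sign) and the implicit simplicity assumption, namely that at most one edge joins any pair of nodes, so that $(\mathcal A_\Theta)_{ij}$ is a single weight rather than a sum over parallel edges.
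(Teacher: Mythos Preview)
Your proposal is correct and follows essentially the same approach as the paper: define the re-weighted graph $G_\Theta$ with edge weights $w_j=\Theta_{jj}/(1+\rho\Theta_{jj})$, identify $A_\Theta=A(\Theta^{-1}+\rho I)^{-1/2}$ as its incidence matrix, and conclude via $\mathcal L_\Theta=A_\Theta A_\Theta^T$. The only difference is one of explicitness: the paper invokes the general identity $\mathcal L=AA^T$ stated just before the lemma, whereas you re-derive it entry-wise for the weighted case, which is perfectly fine and arguably more self-contained.
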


{\edit The next result shows that the normal matrix is strictly diagonally dominant, due to the presence of dual regularization. This property is significant because it assures that} the incomplete Cholesky factorization of the normal equations matrix $S_{\rho,\delta}$ can always be computed without the algorithm breaking down (in exact arithmetic), see e.g.\ \cite{ichol_diag_dom}.

\begin{lemma}\label{lemma:diagdom}
	If $\delta>0$ the matrix $S_{\rho, \delta}$ is strictly diagonally dominant.
\end{lemma}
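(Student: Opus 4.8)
The plan is to leverage Lemma~\ref{lem:Laplacian_Graph}, which identifies the first summand of $S_{\rho,\delta}$ with the Laplacian $\mathcal L_\Theta = \mathcal D_\Theta - \mathcal A_\Theta$ of a weighted undirected graph, and then to observe that adding the positive shift $\delta I$ upgrades the well-known \emph{weak} diagonal dominance of a graph Laplacian into \emph{strict} diagonal dominance. First I would record the sign structure of $\mathcal L_\Theta$. The re-weighted adjacency matrix $\mathcal A_\Theta$ has nonnegative entries, since each edge weight is scaled by $\sqrt{\Theta_{jj}/(1+\rho\Theta_{jj})}\ge 0$ for $\Theta\in\mathbb R^{n\times n}_+$; consequently the off-diagonal entries $(\mathcal L_\Theta)_{ij} = -(\mathcal A_\Theta)_{ij}$ are nonpositive, while the diagonal entries $(\mathcal L_\Theta)_{ii} = (\mathcal D_\Theta)_{ii}\ge 0$ are nonnegative.

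Next I would exploit the defining property of an incidence matrix: every column of $A$ contains exactly one $+1$ and one $-1$, so that $A^T\mathbf e = 0$. This immediately gives
\[
\mathcal L_\Theta\,\mathbf e = A(\Theta^{-1}+\rho I)^{-1}A^T\mathbf e = 0,
\]
i.e.\ each row of $\mathcal L_\Theta$ sums to zero. Combining this row-sum identity with the sign structure above yields, for every index $i$,
\[
\sum_{j\neq i} |(\mathcal L_\Theta)_{ij}| = -\sum_{j\neq i}(\mathcal L_\Theta)_{ij} = (\mathcal L_\Theta)_{ii} = |(\mathcal L_\Theta)_{ii}|,
\]
which is precisely weak diagonal dominance holding with equality in every row.

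Finally, writing $S_{\rho,\delta} = \mathcal L_\Theta + \delta I$, I note that the shift leaves the off-diagonal entries untouched and increases each diagonal entry by exactly $\delta$. Hence for every $i$,
\[
|(S_{\rho,\delta})_{ii}| = (\mathcal L_\Theta)_{ii} + \delta = \sum_{j\neq i}|(\mathcal L_\Theta)_{ij}| + \delta = \sum_{j\neq i}|(S_{\rho,\delta})_{ij}| + \delta > \sum_{j\neq i}|(S_{\rho,\delta})_{ij}|,
\]
where the strict inequality uses $\delta>0$; this is exactly the claimed strict diagonal dominance. I do not anticipate any genuine obstacle here: the only step requiring care is the zero-row-sum identity $A^T\mathbf e=0$, which holds precisely because $A$ is a node-arc incidence matrix and hence transfers the Laplacian's equality-case diagonal dominance to $\mathcal L_\Theta$; everything else is bookkeeping of signs, and the positivity of $\delta$ does the rest.
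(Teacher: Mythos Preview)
Your proposal is correct and follows essentially the same route as the paper: both invoke Lemma~\ref{lem:Laplacian_Graph} to write $S_{\rho,\delta}=\mathcal D_\Theta-\mathcal A_\Theta+\delta I$, use nonnegativity of $\mathcal A_\Theta$ to identify the off-diagonal absolute values, and conclude strict dominance from $(\mathcal D_\Theta)_{ii}=\sum_{j\neq i}(\mathcal A_\Theta)_{ij}$ together with $\delta>0$. Your derivation of the row-sum identity via $A^T\mathbf e=0$ is a slightly more explicit justification of the step the paper takes for granted, but the argument is the same.
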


\begin{proof}
	From Lemma \ref{lem:Laplacian_Graph}, we have that $A(\Theta^{-1}+\rho I)^{-1}A^T=\mathcal D_{\Theta}-\mathcal A_{\Theta}$ and hence
	\begin{equation*}
		\sum_{j \neq i} |(S_{\rho, \delta})_{ij}|= \sum_{j \neq i} | -(\mathcal A_{\Theta})_{ij} |=\sum_{j \neq i} (\mathcal A_{\Theta})_{ij} < ({\mathcal D_{\Theta}})_{ii}+\delta = (S_{\rho, \delta})_{ii}.
	\end{equation*}
\end{proof}

{\edit The next two technical results are related to the distribution of eigenvalues of the normal matrix. They are used in the next section to show that the inexactness introduced when sparsifying the normal matrix remains bounded.}

\begin{lemma} \label{lem:max_eig_lap}
	$\lambda_{max}(AA^T) \leq 2 \max_{v \in V} deg(v)$.
\end{lemma}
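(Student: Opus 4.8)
The plan is to exploit the fact, already recorded in the excerpt, that $AA^T$ coincides with the graph Laplacian $\mathcal L = \mathcal D - \mathcal A$, which is symmetric and positive semidefinite. Consequently $\lambda_{max}(AA^T)$ admits the variational characterization
\[
\lambda_{max}(AA^T) = \max_{\mathbf{x} \neq 0} \frac{\mathbf{x}^T A A^T \mathbf{x}}{\|\mathbf{x}\|^2} = \max_{\mathbf{x}\neq 0} \frac{\|A^T\mathbf{x}\|^2}{\|\mathbf{x}\|^2},
\]
so that the whole task reduces to bounding $\|A^T\mathbf{x}\|^2$ from above by a multiple of $\|\mathbf{x}\|^2$.

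First I would make the quadratic form explicit. From the definition of the incidence matrix, for a directed edge $e=(v,w)$ one has $(A^T\mathbf{x})_e = x_w - x_v$, whence
\[
\|A^T\mathbf{x}\|^2 = \sum_{e=(v,w)\in E}(x_w - x_v)^2.
\]
Applying the elementary inequality $(a-b)^2 \le 2(a^2+b^2)$ to each summand gives the bound $\|A^T\mathbf{x}\|^2 \le 2\sum_{e=(v,w)\in E}(x_v^2 + x_w^2)$. The key combinatorial observation is that, upon summing $x_v^2 + x_w^2$ over all edges, each vertex $v$ contributes the term $x_v^2$ exactly $deg(v)$ times, once for every incident edge; hence the right-hand side equals $2\sum_{v\in V}deg(v)\,x_v^2 \le 2\big(\max_{v\in V}deg(v)\big)\|\mathbf{x}\|^2$, which yields the claim. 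An equally short alternative route is Gershgorin's circle theorem applied to $\mathcal L = \mathcal D - \mathcal A$: the $v$-th diagonal entry is $deg(v)$ while the off-diagonal entries in that row sum, in absolute value, to $deg(v)$, so every eigenvalue lies in the interval $[0, 2\,deg(v)]$ and therefore does not exceed $2\max_{v\in V}deg(v)$.

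There is no genuine analytic difficulty in this statement; the only point requiring a little care is the bookkeeping in the combinatorial step, namely checking that expanding the edge-sum of $x_v^2 + x_w^2$ reproduces each vertex degree with the correct multiplicity. This uses that the graph has no self-loops, so that every edge has two distinct endpoints and contributes to exactly two vertex counts. I would also remark that the positive semidefiniteness of $AA^T$ secures the trivial lower bound $\lambda \ge 0$; this is not needed for the stated inequality, but it confirms that the Gershgorin interval is the sharp two-sided enclosure.
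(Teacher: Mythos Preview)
Your proposal is correct. The paper's own proof consists of a single sentence invoking Gershgorin's circle theorem applied to the Laplacian $AA^T=\mathcal D-\mathcal A$, which is precisely the alternative route you sketch at the end. Your primary argument via the Rayleigh quotient and the inequality $(a-b)^2\le 2(a^2+b^2)$ is a different but equally elementary derivation; it has the minor advantage of making the combinatorial origin of the factor $2$ completely explicit, whereas the Gershgorin route is shorter to state but hides this in the row-sum computation. Either argument is entirely adequate here.
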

\begin{proof}
	The proof is straightforward using Gershgorin's circle Theorems.
\end{proof}

\begin{lemma} \label{lem:eigs_schur}
	The eigenvalues $\lambda$ of matrix $S_{\rho,\delta}$ satisfy
	\[\delta \leq \lambda < \delta + \frac{2}{\rho}\max_{v \in V} deg(v).	\]
	
\end{lemma}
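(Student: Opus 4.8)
The plan is to split $S_{\rho,\delta}$ as $M + \delta I$ with $M := A(\Theta^{-1}+\rho I)^{-1}A^T$, and to bound the two extreme eigenvalues of $M$ separately. Since $M$ is symmetric, the eigenvalues of $S_{\rho,\delta}$ are exactly $\delta$ plus those of $M$, so the whole statement reduces to proving $0 \leq \lambda(M) < \tfrac{2}{\rho}\max_{v \in V}deg(v)$.

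For the lower bound I would invoke Lemma \ref{lem:Laplacian_Graph}, which identifies $M$ with the Laplacian $\mathcal D_\Theta - \mathcal A_\Theta$ of a weighted undirected graph with non-negative edge weights. Any such Laplacian factors as $A_\Theta A_\Theta^T$ and is therefore symmetric positive semidefinite, so $\lambda_{min}(M) \geq 0$ and hence $\lambda_{min}(S_{\rho,\delta}) = \delta + \lambda_{min}(M) \geq \delta$, which is the left inequality.

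For the upper bound, write $D := (\Theta^{-1}+\rho I)^{-1}$, a diagonal matrix whose entries satisfy $D_{jj} < 1/\rho$ by \eqref{eq:unifrom_boundedness_regularized_diag}. Using the Rayleigh characterisation, let $\mathbf v$ be a unit eigenvector associated to $\lambda_{max}(M)$, so that $\lambda_{max}(M) = (A^T\mathbf v)^T D (A^T\mathbf v) = \sum_j D_{jj}(A^T\mathbf v)_j^2$. I would then compare this against $\tfrac{1}{\rho}\|A^T\mathbf v\|^2 = \tfrac{1}{\rho}\,\mathbf v^T AA^T \mathbf v \leq \tfrac{1}{\rho}\lambda_{max}(AA^T)$, and close with Lemma \ref{lem:max_eig_lap}, which gives $\lambda_{max}(AA^T) \leq 2\max_{v \in V} deg(v)$.

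The delicate point, and the step I expect to be the main obstacle, is upgrading the estimate to a \emph{strict} inequality, since the naive operator-norm bound only yields ``$\leq$''. I would handle strictness by cases. If $A^T\mathbf v = 0$ then $\lambda_{max}(M) = 0$, which is strictly below $\tfrac{2}{\rho}\max_{v \in V} deg(v)$ as long as the connected graph has at least one edge, i.e.\ $\max_{v \in V} deg(v) \geq 1$. If instead $A^T\mathbf v \neq 0$, then at least one coordinate $(A^T\mathbf v)_j$ is non-zero and the strict entrywise bound $D_{jj} < 1/\rho$ forces $\sum_j D_{jj}(A^T\mathbf v)_j^2 < \tfrac{1}{\rho}\|A^T\mathbf v\|^2$. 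In either case $\lambda_{max}(M) < \tfrac{2}{\rho}\max_{v \in V} deg(v)$, and adding $\delta$ yields $\lambda_{max}(S_{\rho,\delta}) < \delta + \tfrac{2}{\rho}\max_{v \in V} deg(v)$, completing the two-sided bound.
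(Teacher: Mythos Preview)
Your argument is correct and follows essentially the same route as the paper: split off $\delta I$, use the Rayleigh quotient together with the strict entrywise bound \eqref{eq:unifrom_boundedness_regularized_diag} on $(\Theta^{-1}+\rho I)^{-1}$, and finish with Lemma~\ref{lem:max_eig_lap}. If anything, your treatment of the strict inequality (the case split on $A^T\mathbf v$) is more careful than the paper's own proof, which simply cites \eqref{eq:unifrom_boundedness_regularized_diag} and Lemma~\ref{lem:max_eig_lap} without isolating the strictness argument.
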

\begin{proof}
	Using the Rayleigh quotient, for some vector $\mathbf u$ and $\mathbf v=A^T\mathbf u$, the eigenvalues can be written as
	\[\lambda = 	\frac{\mathbf v^T(\Theta^{-1}+\rho I)^{-1}\mathbf v}{\mathbf v^T\mathbf v} \frac{\mathbf u^T AA^T\mathbf u}{\mathbf u^T\mathbf u} + \delta
		\]
	The lower bound $\lambda\ge\delta$ is trivial; the upper bound follows from Lemma \ref{lem:max_eig_lap} and \eqref{eq:unifrom_boundedness_regularized_diag}.
\end{proof}

\section{Sparsification of the reduced matrix} \label{sec:Sparsification}

{\edit
We now propose a technique to reduce the number of nonzeros in the normal equations $S_{\rho,\delta}$, based on the weights of the edges in the re-weighted graph (according to Lemma~\ref{lem:Laplacian_Graph}). We then show that this sparsification strategy is sound and produces a polynomially convergent interior point algorithm.
}

In this section we omit the IPM iteration counter $j$ and we  consider all the IPM-related quantities as a function of $\mu \to 0$. As IPMs progress towards optimality, we expect the following partition of the diagonal matrix $\Theta$ contributed by the barrier term:
\begin{equation*}
	\begin{split}
		& \mathcal{B}:=\{ i=1,\dots,n \hbox{ s.t. } x_i \to x_i^*>0, \;  s_i \to s_i^*=0  \}\\
		& \mathcal{N}:=\{ i=1,\dots,n \hbox{ s.t. } x_i \to x_i^*=0, \;  s_i \to s_i^*>0  \},
	\end{split}
\end{equation*}
{\edit where the optimal solution $(x^*,y^*,s^*)$ was defined in \eqref{eqn:optimal_solution_PPM}. Notice that $(\mathcal B,\mathcal N)$ is the partition corresponding to the optimal solution.}

\begin{assumption} \label{ass:asymptotic}
	We suppose that the following asymptotic estimates hold
	\begin{equation}\label{eq:variable_splitting_asympotitc}
		\begin{split}
			& s_i \in O(\mu) \hbox{ and } x_i \in O(1) \hbox{ for } i \in \mathcal{B}\\
			& x_i \in O(\mu)  \hbox{ and } s_i \in O(1) \hbox{ for } i \in \mathcal{N} \\
		\end{split}
	\end{equation}
	and, since $x^{-1}_is_i\approx \mu x^{-2}_i$ when an IPM iterate is sufficiently close to the central path, using  \eqref{eq:variable_splitting_asympotitc}, we suppose 
	\begin{equation*}
		\Theta_{ii}^{-1}=x^{-1}_is_i = O({\mu}) \hbox{ for }   i \in \mathcal{B}  \hbox{ and }  \; \; \Theta_{ii}^{-1}=x^{-1}_is_i= O(\mu^{-1}) \hbox{ for } i \in \mathcal{N}.  
	\end{equation*}
	
\end{assumption}
\noindent This assumption makes sense given the neighbourhood that is considered, see e.g.\ \cite{Gon:matrixfree}.

Due to Assumption \ref{ass:asymptotic}, we  consider the following asymptotic estimates of $(\Theta^{-1}+\rho I)^{-1}$
\begin{equation*} 
	(\Theta^{-1}+\rho I)^{-1}_{ii}=\begin{cases}
		O(\frac{1}{\rho+\mu}) & \hbox{ if  } i  \in \mathcal{B}\\
		O(\frac{\mu}{1+\rho \mu} ) & \hbox{ if } i \in \mathcal{N}.
	\end{cases}
\end{equation*}

{\edit The diagonal entries of $\Theta$ give a specific weight to each column of matrix $A$ (or equivalently, give a weight to each edge of the original sparse graph, as shown in Lemma~\ref{lem:Laplacian_Graph}). The columns for which the corresponding $\Theta_{ii}$ is $O(\mu)$ have a very small impact on the normal matrix, but still contribute to its sparsity pattern. In order to save time and memory when forming (complete or incomplete) Cholesky factorizations, we propose the following sparsification strategy: we introduce} a suitable threshold $C_t \in \mathbb{R}_+$  and  define

\begin{equation} \label{eq:theta_sparsification}
	(\widehat{\Theta}_{C_t \mu,\rho}^{\dagger})_{ii}:=\begin{cases}
		(\Theta^{-1}+\rho I)^{-1}_{ii} & \hbox{ if  } (\Theta^{-1}+\rho I)^{-1}_{ii} \geq \frac{C_t \mu}{1+\rho \mu}\\
		0 & \hbox{ if } (\Theta^{-1}+\rho I)^{-1}_{ii} < \frac{C_t \mu}{1+\rho \mu} .
	\end{cases}
\end{equation}
We define the $\mu$-sparisified version $S^{C_t \mu}_{\rho, \delta}$ of  $S_{\rho, \delta}$ as

\begin{equation}
\label{eqn:sparsified_normal_equations}
	S^{C_t \mu}_{\rho, \delta}:= A\widehat{\Theta}_{C_t \mu,\rho}^{\dagger}A^T+\delta I.
\end{equation}
{\edit Notice that this matrix completely ignores some of the columns of $A$ (and some of the edges of the graph). The dual regularization $\delta I$ guarantees that the resulting matrix is non-singular, irrespective of the level of sparsification chosen.} In this paper, we consider using inexact Newton directions produced by solving linear systems with matrix $S^{C_t \mu}_{\rho, \delta}$, rather than $S_{\rho, \delta}$.

\begin{remark}
It is important to note that, in general, the sparsity pattern of the matrix  $S^{C_t \mu}_{\rho, \delta}$ 	
depends on the choice of the parameter $C_t$ and on the partitioning $(\mathcal{B}, \mathcal{N})$. Indeed, when $\mu$ is sufficiently small, we expect that
	
	\begin{equation*}
	\Big|\Big\{i \in \{1, \dots, n\} \hbox{ s.t. }  (\Theta^{-1}+\rho I)^{-1}_{ii} \geq \frac{C_t \mu}{1+\rho \mu}\Big\}\Big|	= |\mathcal{B}| .
	\end{equation*}
	
	
\end{remark}

{\edit Let us now show how the algorithm is affected by the use of the proposed sparsified normal matrix. Notice that the results presented below depend strongly on two facts: the optimization problem evolves on a graph and thus the normal matrix is a Laplacian, with very desirable properties; the interior point method employs primal-dual regularization.}

{\edit We start by showing how much the normal matrix deviates from its sparsified counterpart.}

\begin{theorem} \label{th:difeerence_truncation}
The sparsification strategy in \eqref{eq:theta_sparsification} produces a matrix which is close to the original $S_{\rho,\delta}$, in the sense that
	\begin{equation*}
		\| S_{\rho, \delta}- S^{C_t \mu}_{\rho, \delta}\|=O\Big(\frac{C_t \mu}{1+\rho \mu}\Big).
	\end{equation*}
\end{theorem}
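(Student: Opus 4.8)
The plan is to note that the regularization terms $\delta I$ cancel in the difference, reducing the statement to a spectral-norm bound on a single weighted Laplacian. First I would write
\[
S_{\rho,\delta} - S^{C_t\mu}_{\rho,\delta} = A\big[(\Theta^{-1}+\rho I)^{-1} - \widehat{\Theta}_{C_t\mu,\rho}^{\dagger}\big]A^T =: ADA^T,
\]
where $D$ is the diagonal matrix collecting exactly the entries discarded by the sparsification. By the definition \eqref{eq:theta_sparsification}, $D_{ii}=0$ on the retained indices, while on the dropped indices $D_{ii}=(\Theta^{-1}+\rho I)^{-1}_{ii}$, which by the threshold condition satisfies $0\le D_{ii}<\frac{C_t\mu}{1+\rho\mu}$. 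Hence $D$ is diagonal and positive semidefinite with the Loewner bound $0\preceq D\preceq\frac{C_t\mu}{1+\rho\mu}I$.

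The key step is then to exploit that $ADA^T$ is a weighted Laplacian (Lemma~\ref{lem:Laplacian_Graph}) and to compare it against the unweighted one. Since the congruence $X\mapsto AXA^T$ preserves the Loewner order, the bound on $D$ propagates to
\[
0\preceq ADA^T\preceq\frac{C_t\mu}{1+\rho\mu}\,AA^T.
\]
As both sides are symmetric positive semidefinite, passing to largest eigenvalues (which equal the spectral norm here) gives
\[
\|S_{\rho,\delta}-S^{C_t\mu}_{\rho,\delta}\| = \lambda_{max}(ADA^T) \le \frac{C_t\mu}{1+\rho\mu}\,\lambda_{max}(AA^T).
\]

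To finish, I would invoke Lemma~\ref{lem:max_eig_lap}, which yields $\lambda_{max}(AA^T)\le 2\max_{v\in V}deg(v)$. Since the graph $G$ is fixed, its maximum degree is a constant independent of the barrier parameter $\mu$, so it is absorbed into the $O(\cdot)$ and the claim $\|S_{\rho,\delta}-S^{C_t\mu}_{\rho,\delta}\|=O\big(\frac{C_t\mu}{1+\rho\mu}\big)$ follows. There is no genuine obstacle in this argument; the only points deserving care are verifying the Loewner bound on $D$ directly from the thresholding rule, and recognizing that the order-preserving property of the congruence $X\mapsto AXA^T$ together with Lemma~\ref{lem:max_eig_lap} is precisely what converts the entrywise threshold into a spectral-norm estimate with a graph-dependent but $\mu$-independent constant.
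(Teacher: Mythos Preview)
Your proof is correct and follows essentially the same approach as the paper: both write the difference as $A D A^T$ for the diagonal $D$ of discarded entries, bound $\lambda_{\max}(D)$ by the threshold $\frac{C_t\mu}{1+\rho\mu}$, and then use $\lambda_{\max}(ADA^T)\le \lambda_{\max}(D)\lambda_{\max}(AA^T)$ together with Lemma~\ref{lem:max_eig_lap}. The only cosmetic difference is that you phrase the middle step via the Loewner order while the paper writes out the Rayleigh quotient directly.
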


\begin{proof}
	We have that $S_{\rho, \delta}- S^{C_t \mu}_{\rho, \delta} = AE^{\mu}A^T$ where $E^{\mu}$ is the diagonal matrix defined as
	\begin{equation*}
		E^{\mu}_{ii}:= 	(\Theta^{-1}+\rho I)^{-1}_{ii}-(\widehat{\Theta}_{C_t\mu, \rho}^{\dagger})_{ii}=\begin{cases}
			0 & \hbox{ if } (\Theta^{-1}+\rho I)^{-1}_{ii} \geq \frac{C_t \mu}{1+\rho \mu}\\
			(\Theta^{-1}+\rho I)^{-1}_{ii}  & \hbox{ if } (\Theta^{-1}+\rho I)^{-1}_{ii} < \frac{C_t \mu}{1+\rho \mu}
		\end{cases}.
	\end{equation*}
Hence we have $\lambda_{max}(E^\mu) \leq \frac{C_t \mu}{1+\rho \mu}$. Thesis follows using Lemma \ref{lem:max_eig_lap} and observing that 
\begin{equation*}
	\frac{\mathbf{v}^T(S_{\rho, \delta}- S^{C_t \mu}_{\rho, \delta})\mathbf{v}}{\mathbf{v}^T\mathbf{v}}=\frac{\mathbf{v}^T(AE^\mu A^T)\mathbf{v}}{\mathbf{v}^T\mathbf{v}} \leq \lambda_{max}(E^\mu)\lambda_{max}(AA^T).
\end{equation*}
\end{proof}

{\edit We now show that the condition number of both matrices is uniformly bounded. This is an important property when using iterative Krylov solvers to find the Newton directions.}
\begin{lemma} \label{lem:conditioning_Schir_Complements}
	When $\mu$ is sufficiently small, the condition numbers of $S_{\rho, \delta}$ and $S^{C_t \mu}_{\rho, \delta}$ satisfy
	\begin{equation*}
		{k}_2( S_{\rho, \delta}) \in O\bigg(1+ \frac{1}{\delta(\rho +  \mu)}\bigg) \;\;\hbox{ and }\;\; {k}_2( S^{C_t \mu}_{\rho, \delta}) \in O\bigg(1+ \frac{1}{\delta(\rho +  \mu)}\bigg).
	\end{equation*}
\end{lemma}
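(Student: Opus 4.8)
The plan is to bound the condition number $k_2 = \lambda_{\max}/\lambda_{\min}$ of each matrix by controlling its extreme eigenvalues separately, exploiting that both matrices are symmetric positive definite of the form (weighted Laplacian)$\,+\,\delta I$. I would treat $S_{\rho,\delta}$ first and then reduce the sparsified case to it.

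First I would establish the lower bound on the smallest eigenvalue. For both $S_{\rho,\delta}$ and $S^{C_t\mu}_{\rho,\delta}$, the matrix $A(\Theta^{-1}+\rho I)^{-1}A^T$ (respectively $A\widehat{\Theta}^{\dagger}_{C_t\mu,\rho}A^T$) is, by Lemma \ref{lem:Laplacian_Graph}, the Laplacian of a weighted undirected graph with nonnegative edge weights, hence positive semidefinite. Adding $\delta I$ therefore gives $\lambda_{\min}\geq\delta$ in both cases; for $S_{\rho,\delta}$ this is exactly the lower bound of Lemma \ref{lem:eigs_schur}, and the truncation in \eqref{eq:theta_sparsification} only sets some diagonal weights to zero and so preserves positive semidefiniteness.

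Next I would bound the largest eigenvalue using the Rayleigh-quotient factorization from the proof of Lemma \ref{lem:eigs_schur}. Writing $\mathbf v = A^T\mathbf u$, the top of the spectrum of $A(\Theta^{-1}+\rho I)^{-1}A^T$ is controlled by the product of $\lambda_{\max}\big((\Theta^{-1}+\rho I)^{-1}\big)=\max_i (\Theta^{-1}+\rho I)^{-1}_{ii}$ and $\lambda_{\max}(AA^T)$. The second factor is bounded by $2\max_{v\in V} deg(v)$, a graph-dependent constant, via Lemma \ref{lem:max_eig_lap}. For the first factor, rather than using the crude bound $1/\rho$ from \eqref{eq:unifrom_boundedness_regularized_diag}, I would invoke the sharper asymptotic estimates of Assumption \ref{ass:asymptotic}: when $\mu$ is small the dominant diagonal entries are those indexed by $\mathcal B$, which scale as $O\big(1/(\rho+\mu)\big)$, while the $\mathcal N$-entries are only $O\big(\mu/(1+\rho\mu)\big)$ and hence negligible. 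This yields $\lambda_{\max}(S_{\rho,\delta})\leq \delta + O\big(1/(\rho+\mu)\big)$, and dividing by the lower bound $\delta$ gives $k_2(S_{\rho,\delta})\leq 1 + O\big(1/(\delta(\rho+\mu))\big)$.

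Finally, for the sparsified matrix I would observe that $\widehat{\Theta}^{\dagger}_{C_t\mu,\rho}$ is obtained from $(\Theta^{-1}+\rho I)^{-1}$ by zeroing some diagonal entries, so $\max_i (\widehat{\Theta}^{\dagger}_{C_t\mu,\rho})_{ii}\leq \max_i (\Theta^{-1}+\rho I)^{-1}_{ii}=O\big(1/(\rho+\mu)\big)$; the same Rayleigh-quotient argument then gives the identical upper bound $\lambda_{\max}(S^{C_t\mu}_{\rho,\delta})\leq \delta + O\big(1/(\rho+\mu)\big)$, and together with $\lambda_{\min}\geq\delta$ the same condition-number estimate follows. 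Alternatively one can reach the same conclusion from Theorem \ref{th:difeerence_truncation} and Weyl's inequality, noting that $O\big(C_t\mu/(1+\rho\mu)\big)$ is dominated by $O\big(1/(\rho+\mu)\big)$ as $\mu\to0$. The only real subtlety, and the sole place where the hypothesis that $\mu$ is sufficiently small is genuinely needed, lies in the upper bound on $\lambda_{\max}$: one must resist the loose estimate $1/\rho$ and instead argue via Assumption \ref{ass:asymptotic} that the largest diagonal entry behaves like $1/(\rho+\mu)$, which is precisely what upgrades the denominator from $\delta\rho$ to $\delta(\rho+\mu)$.
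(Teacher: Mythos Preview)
Your proposal is correct and follows essentially the same route as the paper: bound $\lambda_{\min}\ge\delta$ trivially, bound $\lambda_{\max}$ via the Rayleigh-quotient factorization together with Lemma~\ref{lem:max_eig_lap} and the asymptotic estimate $\max_i(\Theta^{-1}+\rho I)^{-1}_{ii}=O(1/(\rho+\mu))$ from Assumption~\ref{ass:asymptotic}, then take the ratio. If anything, you are more explicit than the paper, which does not spell out the argument for $S^{C_t\mu}_{\rho,\delta}$ nor cite Assumption~\ref{ass:asymptotic} directly; your observations that truncation can only decrease the maximal diagonal weight, and that the hypothesis ``$\mu$ sufficiently small'' enters only through Assumption~\ref{ass:asymptotic}, are both apt.
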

\begin{proof}
	The thesis follows from Lemma \ref{lem:max_eig_lap} and observing that for $\mathbf v = A^T\mathbf w$	
	\begin{equation*}
		\begin{split}
			&\delta \leq \frac{\mathbf{w}^TS_{\rho, \delta} \mathbf{w} }{\mathbf{w}^T\mathbf{w} } \leq \delta + \frac{\mathbf{v}^T(\Theta^{-1}+\rho I)^{-1}\mathbf{v} }{\mathbf{v}^T\mathbf{v} }\frac{\mathbf{w}^TAA^T \mathbf{w} }{\mathbf{w}^T\mathbf{w} }  \\ 
			& \leq
			\delta + O\Big(\frac{1}{\rho+\mu} \big(2 \max_{v \in V} deg(v)\big)\Big) .
		\end{split}
	\end{equation*}
\end{proof}

{\edit We now show that the solution of the sparsified linear system is ``close" to the solution of the original one, and the bound depends on $\mu$. This result depends on the spectral distribution that was shown in the previous section.}

\begin{theorem}\label{theo:search_directions_discrepancy}
	For all $\mathbf{v} \in \mathbb{R}^m$ we have that
	\begin{equation*} \label{eq:search_directions_discrepancy}
		(S^{C_t\mu}_{\rho, \delta})^{-1}\mathbf{v}=S^{-1}_{\rho, \delta}\mathbf{v}+\boldsymbol{\boldsymbol{\psi}}
	\end{equation*}
where $\|\boldsymbol{\boldsymbol{\psi}}\| \in O(\frac{C_t \mu}{\delta^2 (1 + \rho \mu)}\|\mathbf v\|)$.
\end{theorem}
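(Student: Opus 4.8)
The plan is to express the difference of the two inverses through the standard resolvent identity and then bound each of the resulting three factors using results already established in this section. Since both matrices are invertible, I would start from
\[
(S^{C_t \mu}_{\rho, \delta})^{-1} - S^{-1}_{\rho, \delta} = (S^{C_t \mu}_{\rho, \delta})^{-1}\big(S_{\rho, \delta} - S^{C_t \mu}_{\rho, \delta}\big)S^{-1}_{\rho, \delta},
\]
so that $\boldsymbol{\psi} = (S^{C_t \mu}_{\rho, \delta})^{-1}(S_{\rho, \delta} - S^{C_t \mu}_{\rho, \delta})S^{-1}_{\rho, \delta}\mathbf{v}$ and, by submultiplicativity of the operator norm,
\[
\|\boldsymbol{\psi}\| \le \|(S^{C_t \mu}_{\rho, \delta})^{-1}\|\,\|S_{\rho, \delta} - S^{C_t \mu}_{\rho, \delta}\|\,\|S^{-1}_{\rho, \delta}\|\,\|\mathbf{v}\|.
\]

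Next I would bound the three operator norms separately. From Lemma \ref{lem:eigs_schur} the smallest eigenvalue of $S_{\rho, \delta}$ is at least $\delta$, hence $\|S^{-1}_{\rho, \delta}\| \le 1/\delta$. The same bound must be shown for the sparsified matrix: the truncated diagonal $\widehat{\Theta}^{\dagger}_{C_t \mu, \rho}$ defined in \eqref{eq:theta_sparsification} has only nonnegative entries (each diagonal element is either a positive entry of $(\Theta^{-1}+\rho I)^{-1}$ or zero), so $A\widehat{\Theta}^{\dagger}_{C_t \mu, \rho}A^T \succeq 0$ and therefore $S^{C_t \mu}_{\rho, \delta} \succeq \delta I$, giving $\|(S^{C_t \mu}_{\rho, \delta})^{-1}\| \le 1/\delta$. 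The central factor is controlled directly by Theorem \ref{th:difeerence_truncation}, which yields $\|S_{\rho, \delta} - S^{C_t \mu}_{\rho, \delta}\| = O\big(\tfrac{C_t \mu}{1+\rho \mu}\big)$.

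Combining the three estimates then gives
\[
\|\boldsymbol{\psi}\| \le \frac{1}{\delta^2}\,O\Big(\frac{C_t \mu}{1+\rho \mu}\Big)\,\|\mathbf{v}\| = O\Big(\frac{C_t \mu}{\delta^2 (1 + \rho \mu)}\|\mathbf{v}\|\Big),
\]
which is exactly the claimed bound.

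The only genuinely substantive step, as opposed to routine bookkeeping, is establishing the uniform bound $\|(S^{C_t \mu}_{\rho, \delta})^{-1}\| \le 1/\delta$ on the inverse of the sparsified matrix. This is precisely where the primal--dual regularization is indispensable: it is the shift $\delta I$ that guarantees $S^{C_t \mu}_{\rho, \delta} \succeq \delta I$ \emph{irrespective} of how aggressive the truncation is, so the inverse stays uniformly bounded even though arbitrarily many columns of $A$ may be discarded. Everything else follows from the resolvent identity together with the spectral estimates already proved in this section.
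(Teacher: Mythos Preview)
Your proof is correct and follows essentially the same route as the paper: apply the resolvent identity $(S^{C_t\mu}_{\rho,\delta})^{-1}-S_{\rho,\delta}^{-1}=(S^{C_t\mu}_{\rho,\delta})^{-1}(S_{\rho,\delta}-S^{C_t\mu}_{\rho,\delta})S_{\rho,\delta}^{-1}$, then bound the middle factor via Theorem~\ref{th:difeerence_truncation} and each inverse by $1/\delta$ using the $\delta I$ shift. If anything, your argument is slightly more explicit than the paper's in justifying $\|(S^{C_t\mu}_{\rho,\delta})^{-1}\|\le 1/\delta$ directly from $\widehat{\Theta}^{\dagger}_{C_t\mu,\rho}\succeq 0$, whereas the paper appeals to Lemma~\ref{lem:eigs_schur} (stated only for $S_{\rho,\delta}$) and leaves the analogous bound for the sparsified matrix implicit.
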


\begin{proof}
	Using Lemma \ref{lem:eigs_schur} and  Theorem \ref{th:difeerence_truncation},  we have that
	
	\begin{equation*}
		\|I-S_{\rho, \delta}^{-1}S_{\rho, \delta}^{C_t\mu}\|\leq \|S_{\rho, \delta}^{-1}\|\|  S_{\rho, \delta}- S^{C_t \mu}_{\rho, \delta} \| \in O\bigg(\frac{C_t\mu}{\delta(1+\rho \mu) }\bigg), 
	\end{equation*}
	and hence
	
	\begin{equation*}
		\|(S_{\rho, \delta}^{-1}-(S_{\rho, \delta}^{C_t\mu})^{-1})\mathbf{v}\|\leq \|(S_{\rho, \delta}^{C_t\mu})^{-1}\|\|S_{\rho, \delta}^{-1}\| \|  (S_{\rho, \delta}- S^{C_t \mu}_{\rho, \delta})\mathbf{v} \| \in O\bigg( \frac{C_t \mu}{\delta^2 (1 + \rho \mu)}\|\mathbf v\|\bigg).
	\end{equation*}
\end{proof}

{\edit
The next technical result is useful for the proof of Corollary~\ref{cor:final}.

\begin{lemma}
$\|\bar{\boldsymbol{\xi}}_p^j\|$ is uniformly bounded, i.e.\ there exists a constant $C_9>0$ such that for all $j\in\mathbb N$
\[\|\bar{\boldsymbol{\xi}}_p^j\|\le C_9
\]
\end{lemma}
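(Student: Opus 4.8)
The plan is to sidestep a direct estimate of $\bar{\boldsymbol{\xi}}_p^j=\boldsymbol{\xi}_p^j-A(\Theta^{-1}+\rho I)^{-1}(X^{-1}\boldsymbol{\xi}_{\mu,\sigma}^j+\boldsymbol{\xi}_d^j)$ and instead exploit the fact that, by construction, $\bar{\boldsymbol{\xi}}_p^j$ is precisely the right-hand side of the (inexactly solved) normal equations. Concretely, Assumption~\ref{ass:residual} states $S_{\rho,\delta}\Delta\mathbf{y}^j=\bar{\boldsymbol{\xi}}_p^j+\boldsymbol{\zeta}^j$, so I would rearrange this to $\bar{\boldsymbol{\xi}}_p^j=S_{\rho,\delta}\Delta\mathbf{y}^j-\boldsymbol{\zeta}^j$ and bound the two resulting terms separately using quantities that the earlier analysis has already brought under uniform control.

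First I would bound $\|S_{\rho,\delta}\Delta\mathbf{y}^j\|\le\|S_{\rho,\delta}\|\,\|\Delta\mathbf{y}^j\|$. By Lemma~\ref{lem:eigs_schur} the spectral norm satisfies $\|S_{\rho,\delta}\|<\delta+\frac{2}{\rho}\max_{v\in V}deg(v)$, a bound that is independent of the iteration index since it holds for every $\Theta$. By Theorem~\ref{thm:poly_Newton} we have $\|\Delta\mathbf{y}^j\|\le C_6 n\sqrt{\mu^j}$, and because the complementarity product is nonincreasing along the iterates (so that $\mu^j\le\mu^0$, as already used in Corollary~\ref{rem:uniform_boundedness_rhs}) this gives $\|\Delta\mathbf{y}^j\|\le C_6 n\sqrt{\mu^0}$, uniformly in $j$. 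The product of the two estimates is therefore a constant.

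Second, $\|\boldsymbol{\zeta}^j\|$ is controlled directly by the inexactness requirement of Assumption~\ref{ass:residual}: $\|\boldsymbol{\zeta}^j\|\le C_\text{inexact}(\mathbf{x}^j)^T\mathbf{s}^j=C_\text{inexact}\,n\mu^j\le C_\text{inexact}\,n\mu^0$, again uniform in $j$. Combining the two bounds via the triangle inequality yields $\|\bar{\boldsymbol{\xi}}_p^j\|\le\big(\delta+\frac{2}{\rho}\max_{v\in V}deg(v)\big)C_6 n\sqrt{\mu^0}+C_\text{inexact}\,n\mu^0=:C_9$, which is the desired uniform bound.

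The argument is essentially routine once the reformulation $\bar{\boldsymbol{\xi}}_p^j=S_{\rho,\delta}\Delta\mathbf{y}^j-\boldsymbol{\zeta}^j$ is spotted, and indeed spotting it is the whole point. The only mildly delicate issue, and the reason to take this route rather than estimating $\bar{\boldsymbol{\xi}}_p^j$ from its definition, is that a direct estimate would have to contend with $X^{-1}\boldsymbol{\xi}_{\mu,\sigma}^j$, whose entries $\sigma\mu^j/x_i^j-s_i^j$ are not obviously bounded as components $x_i^j\to0$; one could still close the argument by invoking the neighbourhood bound $x_i^j s_i^j\ge\underline{\gamma}\mu^j$ to cancel the singular factor $1/x_i^j$, but the reformulation avoids this bookkeeping entirely and reuses results already established.
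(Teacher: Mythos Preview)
Your argument is correct and takes a genuinely different route from the paper. The paper estimates $\bar{\boldsymbol{\xi}}_p$ directly from its definition, the main effort being to control $(\Theta^{-1}+\rho I)^{-1}X^{-1}\boldsymbol{\xi}_{\mu,\sigma}$ via the square-root splitting
\[
\|(\Theta^{-1}+\rho I)^{-1/2}\|\,\|(\Theta^{-1}+\rho I)^{-1/2}X^{-1/2}S^{1/2}\|\,\big(\|X^{1/2}S^{1/2}\mathbf e\|+\sigma\mu\|X^{-1/2}S^{-1/2}\mathbf e\|\big),
\]
which is exactly the device already used inside the proof of Theorem~\ref{thm:poly_Newton}. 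You instead rewrite $\bar{\boldsymbol{\xi}}_p^j=S_{\rho,\delta}\Delta\mathbf{y}^j-\boldsymbol{\zeta}^j$ and feed in the uniform spectral bound on $S_{\rho,\delta}$ from Lemma~\ref{lem:eigs_schur} together with the bound $\|\Delta\mathbf{y}^j\|\le C_6 n\sqrt{\mu^j}$ from Theorem~\ref{thm:poly_Newton}. This is shorter precisely because it \emph{recycles} Theorem~\ref{thm:poly_Newton} rather than redoing its key estimate; the paper's proof is self-contained but repeats that bookkeeping. The only minor cost of your route is the reliance on the graph-specific Lemma~\ref{lem:eigs_schur}, whereas the paper's direct estimate uses nothing about the Laplacian structure; if one wanted a fully generic version of your argument, the cruder bound $\|S_{\rho,\delta}\|\le\delta+\|A\|^2/\rho$ (immediate from \eqref{eq:unifrom_boundedness_regularized_diag}) would serve just as well.
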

\begin{proof}
For the sake of clarity of notation, we do not include the index $j$ in the proof.

To bound $\|\bar{\boldsymbol \xi}_p\|$, consider the following estimate
\[
\|\bar{\boldsymbol \xi}_p\| 
\le 
\|{\boldsymbol \xi}_p\| + \|A\|
\Big(
\|(\Theta^{-1}+\rho I)^{-1}X^{-1}{\boldsymbol \xi}_{\mu,\sigma}\|+
\|(\Theta^{-1}+\rho I)^{-1}\|\|{\boldsymbol \xi}_d\|
\Big).
\]
We already know the following estimates
\[
\|{\boldsymbol \xi}_p\|\le\frac{\mu n}{\gamma_p}\le \frac{\mu^0 n}{\gamma_p},\qquad
\|{\boldsymbol \xi}_d\|\le\frac{\mu n}{\gamma_d}\le \frac{\mu^0 n}{\gamma_d},\qquad
\|(\Theta^{-1}+\rho I)^{-1}\|\le\frac{1}{\rho}.
\]
To estimate $\|(\Theta^{-1}+\rho I)^{-1}X^{-1}{\boldsymbol \xi}_{\mu,\sigma}\|$, we proceed as in \eqref{eq:Delta_x_bound}:
\begin{align}
&\|(\Theta^{-1}+\rho I)^{-1}X^{-1}{\boldsymbol \xi}_{\mu,\sigma}\|=
\|(\Theta^{-1}+\rho I)^{-1}(S\mathbf e-\sigma\mu X^{-1}\mathbf e)\|\le\notag\\
&\le \|(\Theta^{-1}+\rho I)^{-1/2}\|\|(\Theta^{-1}+\rho I)^{-1/2}X^{-1/2}S^{1/2}\|\big(\|X^{1/2}S^{1/2}\mathbf e\|+\sigma\mu\|X^{-1/2}S^{-1/2}\mathbf e\|\big).\notag
\end{align}
It is straightforward to prove that
\[\|(\Theta^{-1}+\rho I)^{-1/2}\|\le\frac{1}{\rho^{1/2}},\qquad\|(\Theta^{-1}+\rho I)^{-1/2}X^{-1/2}S^{1/2}\|\le1.
\]
The remaining terms can be bounded using the properties of the neighbourhood
\[\|X^{1/2}S^{1/2}\mathbf e\| \le \sqrt{\mu\bar\gamma n},\quad
\sigma\mu\|X^{-1/2}S^{-1/2}\mathbf e\| \le \sigma\sqrt{\frac{\mu n}{\underline{\gamma}}}.
\]
Since $\mu\le\mu^0$, we deduce that $\|\bar{\boldsymbol{\xi}}_p\|\le C_9$, for some positive constant $C_9$.
\end{proof}
}

{\edit Finally, we show that, for a small enough constant $C_t$, the inexactness introduced by the sparsification strategy satisfies the Assumption \eqref{eq:inexact_assumption}. Therefore, an algorithm that includes such a sparsification strategy retains the polynomial complexity of the inexact IPM shown in Section \ref{sec:convergence}.}

\begin{corollary}
\label{cor:final}
	If in Algorithm \ref{alg:IPM} we generate the search directions using $(S^{C_t\mu}_{\rho, \delta})^{-1}$ with $C_t$ sufficiently small, i.e. if we compute the search directions using \eqref{eq:normal_system}, \eqref{eq:normal_sol_2_bis} and \eqref{eq:normal_sol_3_bis} where $S^{C_t \mu}_{\rho, \delta}$ substitutes $S_{\rho, \delta}$, then Algorithm \ref{alg:IPM} is convergent.
\end{corollary}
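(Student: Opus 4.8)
The plan is to interpret the directions generated with the sparsified matrix $S^{C_t\mu}_{\rho,\delta}$ as admissible \emph{inexact} Newton directions in the sense of \eqref{eq:inexact_Newton_System}, and then to show that for $C_t$ small enough the induced residual satisfies the inexactness requirement \eqref{eq:inexact_assumption} of Assumption \ref{ass:residual}. Once this is established, convergence is inherited verbatim from Theorem \ref{th:convergence}, since no other part of the analysis of Section \ref{sec:convergence} uses the specific form of the linear solve.

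First I would identify where the inexactness enters. When $S^{C_t\mu}_{\rho,\delta}$ replaces $S_{\rho,\delta}$ only in \eqref{eq:normal_system}, the vector $\Delta\mathbf y$ solves $S^{C_t\mu}_{\rho,\delta}\Delta\mathbf y=\bar{\boldsymbol\xi}_p$, while $\Delta\mathbf x$ and $\Delta\mathbf s$ are still recovered from the \emph{exact} back-substitutions \eqref{eq:normal_sol_2_bis} and \eqref{eq:normal_sol_3_bis}. A direct check shows that \eqref{eq:Newton_details} and \eqref{eq:normal_sol_3} then hold identically, so the only perturbed block is the primal one \eqref{eq:normal_sol_2}, with residual
\[
\boldsymbol\zeta=S_{\rho,\delta}\Delta\mathbf y-\bar{\boldsymbol\xi}_p=\big(S_{\rho,\delta}-S^{C_t\mu}_{\rho,\delta}\big)\Delta\mathbf y.
\]
This is exactly the structure assumed in \eqref{eq:inexact_Newton_System}, so it remains only to bound $\|\boldsymbol\zeta\|$.

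For the bound I would combine three ingredients already available. From the proof of Theorem \ref{th:difeerence_truncation} and Lemma \ref{lem:max_eig_lap}, one has $\|S_{\rho,\delta}-S^{C_t\mu}_{\rho,\delta}\|\le \tfrac{2C_t\mu}{1+\rho\mu}\max_{v\in V}deg(v)$; since $A\widehat{\Theta}_{C_t\mu,\rho}^{\dagger}A^T\succeq0$, the matrix $S^{C_t\mu}_{\rho,\delta}$ has smallest eigenvalue at least $\delta$, so $\|\Delta\mathbf y\|\le\|(S^{C_t\mu}_{\rho,\delta})^{-1}\|\,\|\bar{\boldsymbol\xi}_p\|\le C_9/\delta$ by the uniform bound $\|\bar{\boldsymbol\xi}_p\|\le C_9$ shown above. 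Putting these together and using $1+\rho\mu\ge1$ yields
\[
\|\boldsymbol\zeta\|\le\frac{2C_9\max_{v\in V}deg(v)}{\delta}\,C_t\,\mu.
\]
The same conclusion follows, with a slightly different constant, by writing $\boldsymbol\zeta=S_{\rho,\delta}\boldsymbol\psi$ and invoking Theorem \ref{theo:search_directions_discrepancy} together with Lemma \ref{lem:eigs_schur}.

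The decisive step is then the cancellation of $\mu$. Since $\mathbf x^T\mathbf s=n\mu$, the target inequality $\|\boldsymbol\zeta\|\le C_\text{inexact}\,\mathbf x^T\mathbf s$ is equivalent to the $\mu$-free condition $C_t\le \tfrac{C_\text{inexact}\,n\,\delta}{2C_9\max_{v\in V}deg(v)}$, so any $C_t$ below this threshold forces Assumption \ref{ass:residual} to hold at \emph{every} inner iteration $j$, and convergence follows from Theorem \ref{th:convergence}. I expect the main obstacle to be precisely this $\mu$-uniformity: one must use that the bound on $\|S_{\rho,\delta}-S^{C_t\mu}_{\rho,\delta}\|$ is genuinely linear in $\mu$ for all $\mu$ (not merely asymptotically), so that the factor $\mu$ cancels exactly against $\mathbf x^T\mathbf s=n\mu$ and the admissible threshold on $C_t$ does not deteriorate as $\mu\to0$; this is what allows the polynomial complexity of Section \ref{sec:convergence} to carry over without change.
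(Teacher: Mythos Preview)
Your proposal is correct and follows essentially the same strategy as the paper: identify the residual $\boldsymbol\zeta$ arising from the sparsified solve, bound it by a constant times $C_t\mu$, and cancel $\mu$ against $\mathbf x^T\mathbf s=n\mu$ to obtain a $\mu$-independent threshold on $C_t$ so that Assumption~\ref{ass:residual} holds and Theorem~\ref{th:convergence} applies. The only minor difference is that the paper goes through $\boldsymbol\zeta=S_{\rho,\delta}\boldsymbol\psi$ with Theorem~\ref{theo:search_directions_discrepancy} and Lemma~\ref{lem:eigs_schur} (which you mention as an alternative), while your primary route bounds $\boldsymbol\zeta=(S_{\rho,\delta}-S^{C_t\mu}_{\rho,\delta})\Delta\mathbf y$ directly; this avoids the detour and yields a slightly sharper constant ($1/\delta$ rather than $1/\delta^2$), but the argument is otherwise the same.
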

\begin{proof}
Using Theorem \ref{theo:search_directions_discrepancy}, we have

\begin{equation*} 
	(S^{C_t\mu}_{\rho, \delta})^{-1}\bar{\boldsymbol{\xi}}_p=S^{-1}_{\rho, \delta}\bar{\boldsymbol{\xi}}_p+\boldsymbol{\boldsymbol{\psi}}
\end{equation*}
where $\|\boldsymbol{\boldsymbol{\psi}}\| \leq C_{10} \frac{C_t \mu}{\delta^2 (1 + \rho \mu)}\|\bar{\boldsymbol \xi}_p\|$ for some constant $C_{10}>0$. Hence 
\begin{equation*} 
	S_{\rho, \delta} \underbrace{(S^{C_t\mu}_{\rho, \delta})^{-1}\bar{\boldsymbol{\xi}}_p}_{= \Delta \mathbf{y}}=\bar{\boldsymbol{\xi}}_p+S_{\rho, \delta}\boldsymbol{\boldsymbol{\psi}}.
\end{equation*}
Recall \eqref{eq:inexact_assumption} and Lemma \ref{lem:eigs_schur}; the thesis follows observing that there exists a constant $C_{11}>0$ s.t.
\begin{equation*}
\|  S_{\rho, \delta}\boldsymbol{\boldsymbol{\psi}} \| \leq 	C_{11} \frac{C_t \mu}{\delta^2 (1 + \rho \mu)} \leq C_\text{inexact} \mathbf{x}^T\mathbf{s}
\end{equation*}
where the last inequality holds if $$C_t <  \frac{\delta^2 (1 + \rho \mu) nC_\text{inexact}}{C_{11}}. $$
\end{proof}

\section{Numerical Results} \label{sec:numer_res}
The proposed method is compared with Lemon (Library for Efficient Modelling and Optimization on Networks) \cite{lemon_paper}, an extremely efficient \texttt{C++} implementation of the network simplex method, that has been shown to significantly outperform other popular implementations, like Cplex, see e.g.\ \cite{CasNas:ipm,ZanGon:OT}. The network simplex method has been shown \cite{SchSchGot:dotmark} to be very competitive against other algorithms specifically developed  for discrete OT, while remaining very robust and adaptable to many types of problems. Let us highlight that the other algorithms available in Lemon (cost scaling, capacity scaling, cycle cancelling) produced worse results than the network simplex.

All the computational tests discussed in this section are performed using a Dell PowerEdge R740 running Scientific Linux 7 with $4 \times$ Intel Gold 6234 3.3G, 8C/16T, 10.4GT/s, 24.75M Cache, Turbo, HT (130W) DDR4-2933, with 500GB of memory.
The PS-IPM implementation closely follows  the one from \cite{Cipolla_Gondzio} and is written in Matlab\textsuperscript{\textregistered}. The software versions used for the numerical experiments are as follows: Matlab R2022a, Lemon 1.3.1 and GCC 4.8.5 as the \texttt{C++} compiler. 

We stop Algorithm \ref{alg:PS-MF-IPM},  when
	\begin{equation}\label{eqn:Alg1_stop} 
	\|\mathbf{g} -A^T\mathbf{y}-\mathbf{s}  \|_{\infty} \leq \;R \cdot tol  \;\wedge\; {\|\mathbf{b} -A\mathbf{x} \|_1} \leq \;R \cdot tol \;\wedge\; C_{\mathbf{x}, \mathbf{s}} \leq \; tol,
\end{equation}
where  \begin{equation*}
	tol =10^{-10}, \, \, \, \,	R:=\max \{\|A\|_{\infty}, \|\mathbf{b}\|_1, \|\mathbf{c}\|_1 \},
\end{equation*}
and 
\begin{equation*}
	C_{\mathbf{x}, \mathbf{s}}:=\max_{i}\{\min\{|(\mathbf{x}_i\mathbf{s}_i)|,|\mathbf{x}_i|, |\mathbf{s}_i|\} \}.
\end{equation*}
Concerning the choice of the parameters in Algorithm \ref{alg:PS-MF-IPM}, we set $\sigma_r = 0.7$. Moreover, to prevent wasting time on finding excessively accurate solutions in the early PPM sub-problems, we set $\tau_1=10^{-4}$, i.e. we use as inexactness criterion for the PPM method
\begin{equation*}\label{eq:stopping_condition_empirical}
	\|\mathbf{r}_k(\mathbf{x}_{k+1},\mathbf{y}_{k+1}))\| < 10^4 \sigma_r^k \min\{1, \|(\mathbf{x}_{k+1}, \mathbf{y}_{k+1})-(\mathbf{x}_{k}, \mathbf{y}_{k}) \|.
\end{equation*}  
Indeed, in our computational experience, we have found that driving the IPM solver to a high accuracy in the initial PPM iterations is unnecessary and usually leads to a significant deterioration of the overall performance. 

Concerning Algorithm \ref{alg:IPM}, we set as regularization parameters $\rho=10^{-4}$ and $\delta = 10^{-6}$.
Moreover, in order to find the search direction, we employ a widely used predictor-corrector method \cite{MR1186163}. 
This issue represents the main point where practical implementation deviates from the theory in order to gain computational efficiency.

Finally, concerning the test problems, in all the following experiments we generate the load vector $\boldsymbol{\rho_1-\rho_0}$ in \eqref{eqn:problem_formulation} randomly and such that the sum of its entries is zero (to guarantee feasibility of the optimization problem), with only $10\%$ of them being nonzeros. Moreover, we fix the weight of each edge at $1$.

\subsection{Analysis of the sparsification strategy} \label{sec:sparsification_res}
In this section, we compare three possible solution strategies inside the PS-IPM: Cholesky  factorization  (using Matlab's \texttt{chol} function) applied to the full normal equations matrix \eqref{eqn:normal_equations_matrix}; Cholesky factorization (always using Matlab's \texttt{chol} function) applied to the sparsified matrix \eqref{eqn:sparsified_normal_equations}; preconditioned conjugate gradient (PCG) (using Matlab's \texttt{pcg} function) applied to the sparsified matrix \eqref{eqn:sparsified_normal_equations} with incomplete Cholesky preconditioner (computed using Matlab's \texttt{ichol} function). 
More in particular, as sparsification parameter in \eqref{eq:theta_sparsification} we use $C_t = 0.4$, $\texttt{`droptol'} = 10^{-3}$ in   \texttt{ichol} and $\texttt{`tol'} = 10^{-1}\mu$ in \texttt{pcg}.

We test the above mentioned solution strategies on various instances generated with the \texttt{CONTEST} generator \cite{Contest}; in particular, we considered the graphs  \texttt{pref}, \texttt{kleinberg}, \texttt{smallw} and \texttt{erdrey},  with a fixed number of $100,000$ nodes and different densities (i.e.\ average number of edges per node). Therefore, for these instances, $m=|V|=100,000$ and $n=|E|=m\cdot\text{density}$.

In the upper panels of Figures~\ref{fig:sparsification_analysis} and \ref{fig:sparsification_analysis2} we report the computational time of the three approaches for various values of densities (chosen in relation to the properties of the graph), whereas in the lower panels we report the total number of IPM iterations.
From the presented numerical results, it is clear that the sparsification strategy, in conjunction with the iterative solution of the linear systems, provides a clear advantage over the use of a direct factorization. As can be expected, the iterative method and the sparsification strategy become more advantageous when the size of the problem (number of edges) increases. On the other hand, it is important to note that the use of the sparsified Newton equations in conjunction with the full Cholesky factorization presents only limited advantages in terms of computational time when compared to the Cholesky factorization of the full Newton normal equation. This is the case because the resulting inexact IPM requires, generally, more iterations to converge (see lower panels of Figures~\ref{fig:sparsification_analysis} and \ref{fig:sparsification_analysis2}). Advantages of the proposed approach become clearer when the graphs are denser.

\begin{figure}[h]
\centering
\caption{Comparison of sparsified and full normal equations approach, using full Cholesky factorization or incomplete Cholesky as preconditioner for PCG, in terms of IPM iterations and computational time, {\edit for problems {\tt pref} and {\tt kleinberg}.}}
\label{fig:sparsification_analysis}
\includegraphics[width=\textwidth]{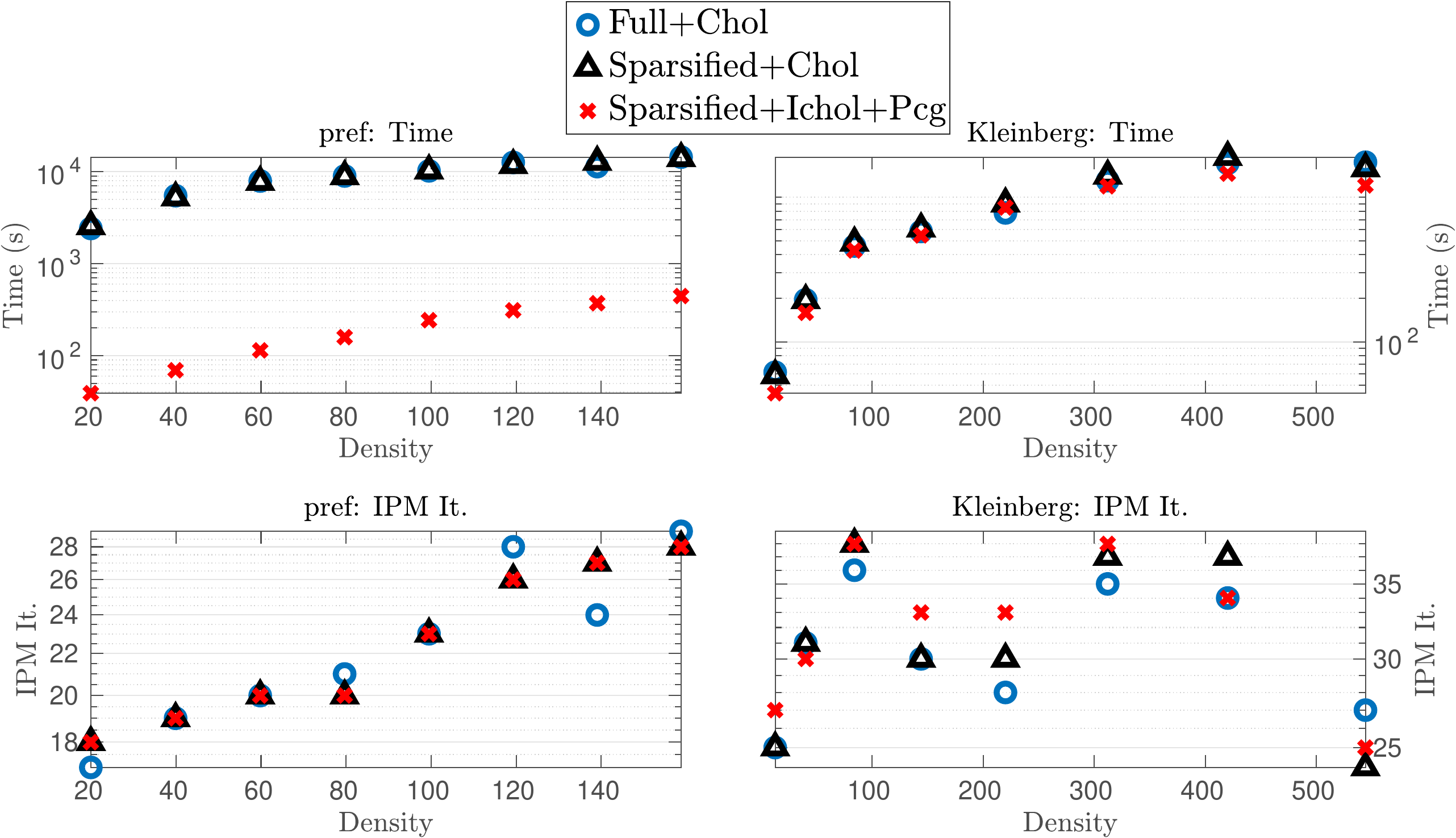}
\end{figure}

\begin{figure}[h]
	\centering
	\caption{Comparison of sparsified and full normal equations approach, using full Cholesky factorization or incomplete Cholesky as preconditioner for PCG, in terms of IPM iterations and computational time, {\edit for problems {\tt erdrey} and {\tt smallw}.}}
	\label{fig:sparsification_analysis2}
	\includegraphics[width=\textwidth]{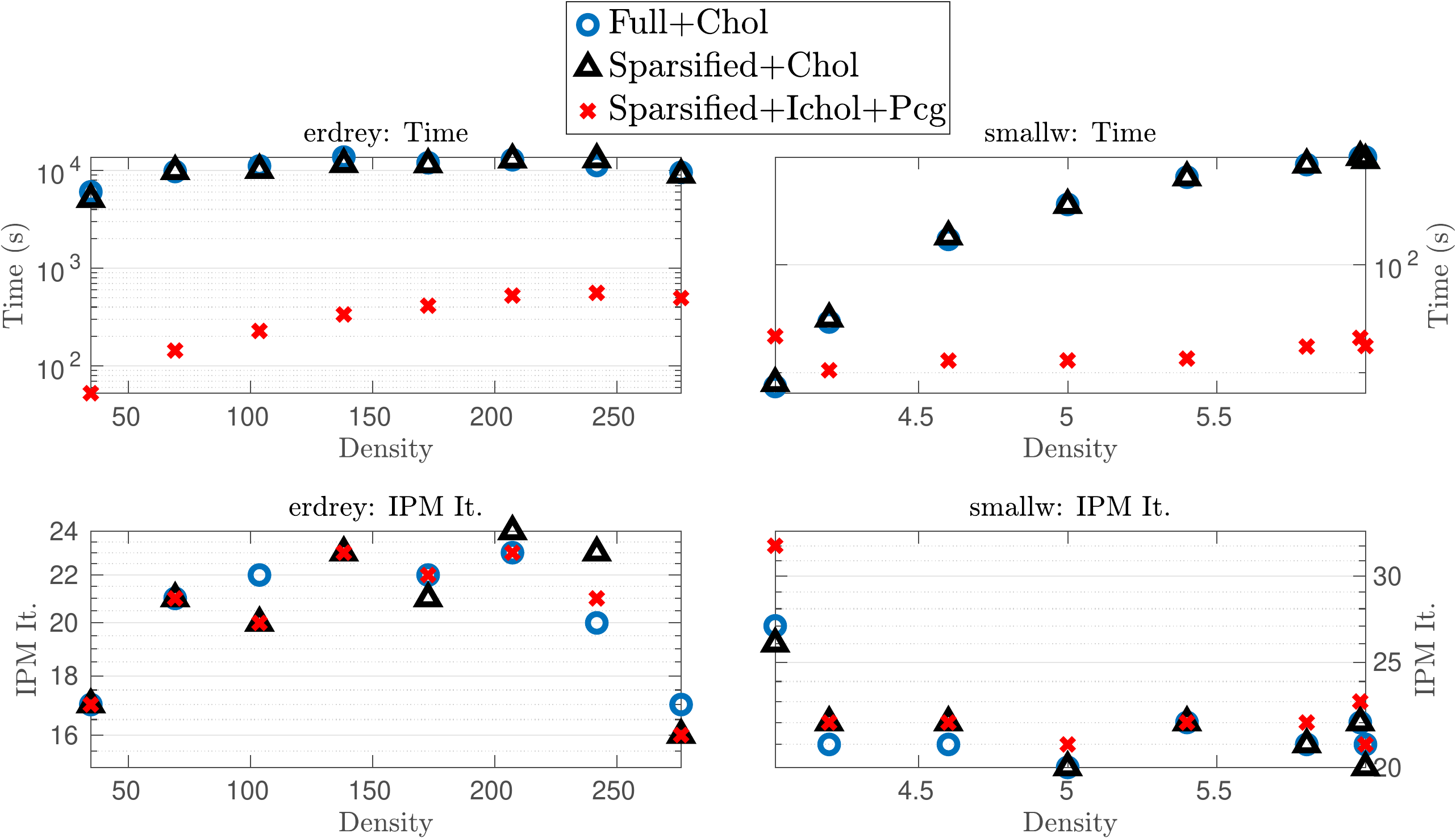}
\end{figure}

\subsection{Results on randomly generated graphs}
In this section, we compare the PS-IPM algorithm, using the sparsified normal equations matrix and the PCG, with the network simplex solver of Lemon. For PS-IPM we use the same parameters as proposed in Section \ref{sec:sparsification_res}. The graphs used in this section come from the generator developed in \cite{VigLat:generator} and already used for OT on graphs in \cite{MR3820384}. This generator produces random connected graphs with a number of nodes varying from $1,000$ to $10,000,000$ and degrees of each node in the range $[1,10]$, with an average of $5$ edges per node. For each size, $10$ graphs and load vectors are generated and tested. These parameters closely resemble the ones used in \cite{MR3820384}.

Figure~\ref{fig:results_random_generator1} shows the comparison of the computational time between PS-IPM and Lemon: for each size of the problem (indicated by the total number of edges), we report the summary statistics of the execution times using Matlab's \texttt{boxplot}.

\begin{figure}[h]
	\centering
	\caption{Box Plots of the computational times of PS-IPM and Lemon, for randomly generated graphs. The red and black intervals show the spread of the measured computational times (red, on the left, is PS-IPM; black, on the right, is Lemon), while the (blue) crosses indicate the outliers.}
	\label{fig:results_random_generator1}
	\includegraphics[width=\textwidth]{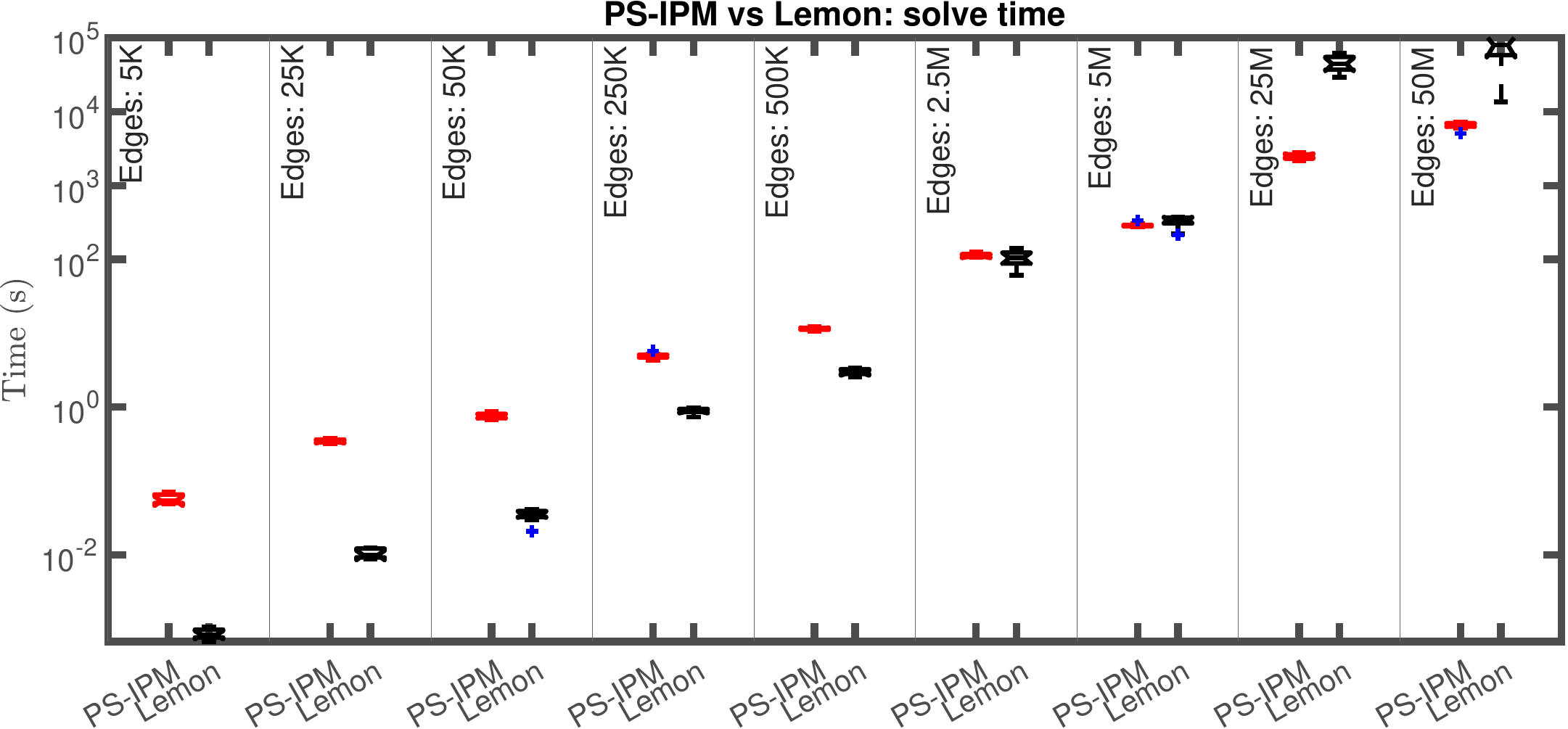}
\end{figure} 

For small size problems, Lemon is the clear winner, by two orders of magnitude; however, as the size increases, the performance difference between the two methods reduces and for the largest instance considered, Lemon becomes one order of magnitude slower than PS-IPM.

{\edit Figure~\ref{fig:results_random_generator} shows the average computational time against the number of edges (from $5,000$ to $50M$) in a logarithmic scale, the corresponding regression lines and their slopes.  Both the proposed method and the network simplex (see \cite{Orl:polynomial}) are known to have polynomial complexity in terms of number of iterations (although the estimates are usually very pessimistic for IPMs); from the computational results presented,  we can estimate the practical time complexity of both methods. Recall that, in a log-log plot, polynomials of the type $x^m$ appear as straight lines with slope $m$. Using linear regression, we can estimate that the time taken by Lemon grows with exponent approximately $2.06$, while the time taken by PS-IPM grows with exponent approximately $1.28$, providing a considerable advantage for large sizes.
}

{\edit Looking at the full set of results as reported in Figure~\ref{fig:results_random_generator1}, let us mention finally the fact that the variance of the computational times over the 10 runs for a given problem size is smaller when using PS-IPM, especially for large sizes,} indicating that the method is more robust and less dependent on the specific problem being solved. This is a very desirable property.

\begin{figure}[t]
\centering
\caption{Logarithmic plot of the computational time for randomly generated graphs. The time taken by Lemon (red circles) grows as $(\text{number of edges})^{2.06}$; the time taken by PS-IPM (blue triangles) grows as $(\text{number of edges})^{1.28}$.}
\label{fig:results_random_generator}
\includegraphics[width=\textwidth]{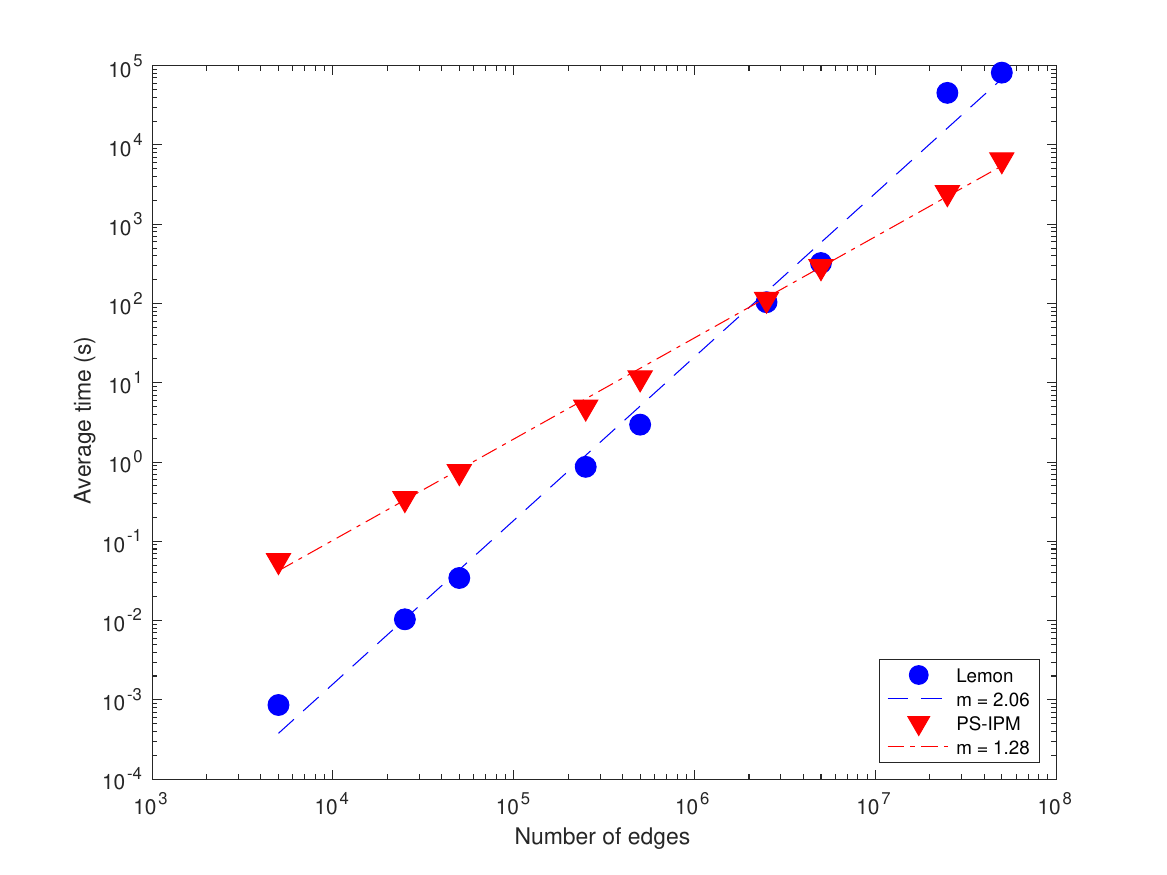}
\end{figure}


\subsection{Results on SuiteSparse graphs}
{\edit
Results on randomly generated problems do not necessarily represent the ability of an optimization method to tackle problems coming from real world applications. Therefore,}
in this section, we show the results of applying PS-IPM and Lemon to some sparse graphs from the SuiteSparse matrix collection \cite{MR2865011}. The characteristics of the graphs considered are shown in Table~\ref{tab:suitesparse}:  the number of nodes, edges and the average number of edges per node. All the graphs are undirected and connected. Due to the fact that the considered graphs are particularly sparse, in the numerical results presented in this section, we solve the sparsified normal equations using the full Cholesky factorization.

\begin{table}[h]
\caption{Details of the graphs from the SuiteSparse matrix collection, ordered by increasing number of edges.}
\label{tab:suitesparse}
\centering
\begin{tabular}{rrrr}
\hline
Name & Nodes & Edges & Density\\
\hline
\texttt{nc2010} & 288,987 & 1,416,620 & 4.9\\
\texttt{NACA0015} & 1,039,183 & 6,229,636 & 6.0\\
\texttt{great-britain-osm} & 7,733,822 & 16,313,034 & 2.1\\
\texttt{hugetric-00010} & 6,592,765 & 19,771,708 & 3.0\\
\texttt{hugetric-00020} & 7,122,792 & 21,361,554 & 3.0\\
\texttt{hugetrace-00010} & 12,057,441 & 36,164,358 & 3.0\\
\texttt{hugetrace-00020} & 16,002,413 & 47,997,626 & 3.0\\
\texttt{delaunay-n23} & 8,388,608 & 50,331,568 & 6.0\\
\hline
\end{tabular}
\end{table}

Figure~\ref{fig:results_SuiteSparse_logplot} shows the computational times for the eight problems considered, using PS-IPM and Lemon. Apart from the problem \texttt{nc2010}, which represents a relatively small instance in out dataset, on all the other problems PS-IPM consistently outperforms Lemon in terms of required computational time. In particular, for the problems \texttt{hugetric-00010}, \texttt{hugetric-00020}, \texttt{hugetrace-00010} and \texttt{hugetrace-00020}, which reach up to 16 million nodes and 48 million edges, PS-IPM is one order of magnitude faster than Lemon.

{\edit
Notice that graphs of these sizes (and larger) appear in many modern practical applications, e.g. social networks, PageRank, analysis of rail/road networks, energy models, to mention a few.

Looking at the regression lines and their slopes, we notice that the time taken by Lemon grows with exponent approximately $2.07$ while the time taken by PS-IPM grows with exponent approximately $1.40$. These values are very close to the ones found previously for randomly generated graphs. The data of Figure \ref{fig:results_SuiteSparse_logplot} however has a more erratic behaviour than the times shown in Figure \ref{fig:results_random_generator}, because the properties of each graph considered are different and because we are not averaging over $10$ different instances of each problem.

Let us highlight also that the time taken by Lemon seems to be more problem dependent, while PS-IPM looks more consistent and robust.
}


\begin{figure}[t]
\centering
\caption{Logarithmic plot of the computational time for the SuiteSparse problems. The time taken by Lemon (red circles) grows as $(\text{number of edges})^{2.07}$; the time taken by PS-IPM (blue triangles) grows as $(\text{number of edges})^{1.40}$. The instances are ordered as in Table \ref{tab:suitesparse}.}
\label{fig:results_SuiteSparse_logplot}
\includegraphics[width=\textwidth]{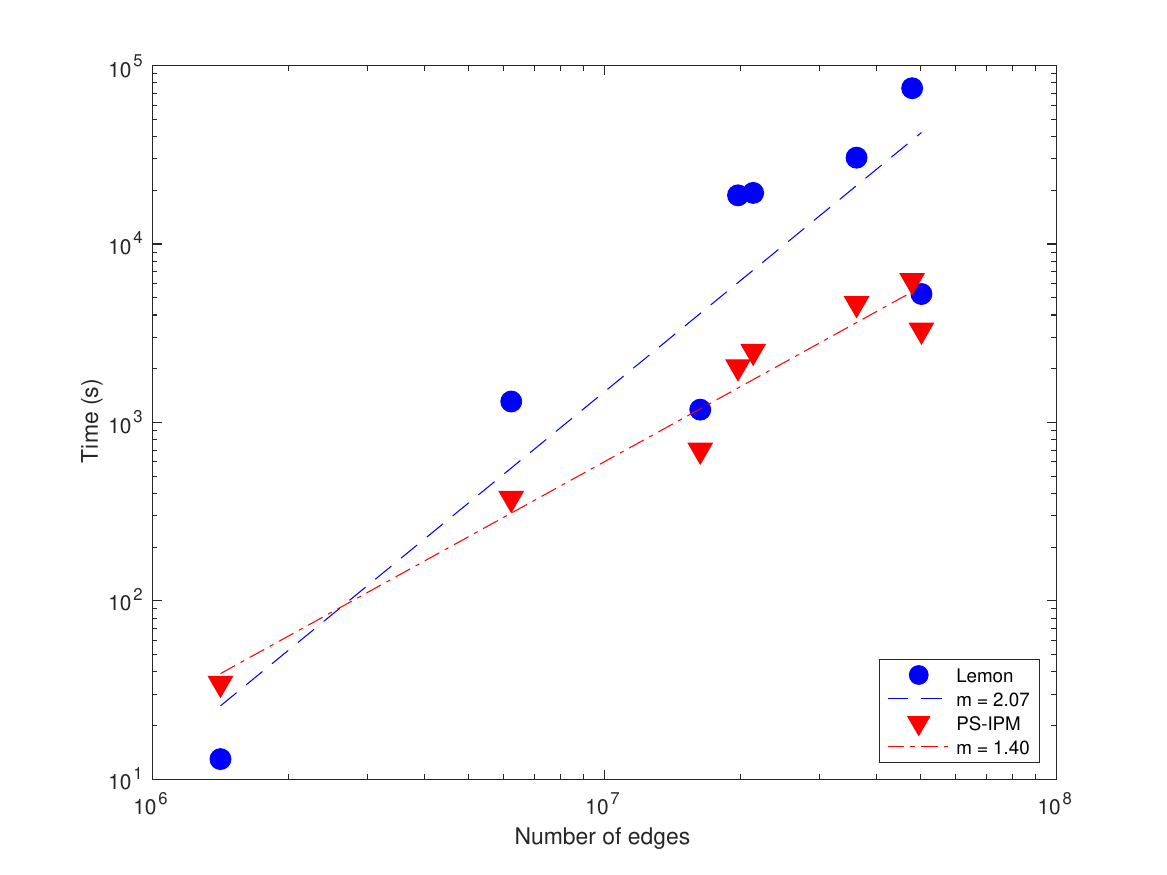}
\end{figure}

\section{Conclusion}
An efficient computational framework for the solution of Optimal Transport problems on graphs has been presented in this paper. Such framework relies on Proximal-Stabilized Interior Point Method and clever sparsifications of the normal Newton equations to compute the inexact search directions. The proposed technique is sound and polynomial convergence guarantee has been established for the inner inexact IPM. Extensive numerical experiments show that for large scale problems, a simple prototype \texttt{Matlab} implementation is able to outperform consistently  a highly specialized and very efficient \texttt{C++} 
implementation of the network simplex method. 

{\edit We highlight also that Interior Point Methods are more easily parallelizable than simplex-like methods; for huge scale problems, for which high performance computing resources need to be used, the use of IPMs with proper parallelization may be the only viable strategy to solve these problems.}

\bibliographystyle{siam}
\bibliography{OT_bib}

\end{document}